\newtheorem{theorem}{{\sc Theorem}}[section]
\newtheorem{cor}[theorem]{{\sc Corollary}}
\newtheorem{lemma}[theorem]{{\sc Lemma}}
\newtheorem{prop}[theorem]{{\sc Proposition}}
\theoremstyle{remark}
\newtheorem{remark}[theorem]{{\sc Remark}}
\theoremstyle{definition}
\newtheorem{example}[theorem]{\sc example}
\newtheorem{cond}[theorem]{\sc condition}
\newcommand{\R}{\mathbb{R} }
\newcommand{\N}{\mathbb{N} }
\newcommand{\A}{\mathcal{A}}
\newcommand{\F}{\mathcal{F}}
\newcommand{\D}{\mathcal{D}}
\newcommand{\calH}{\mathcal{H}}
\newcommand{\calL}{\mathcal{L}}
\newcommand{\la}{\lambda}
\newcommand{\al}{\alpha}
\newcommand{\ga}{\gamma}
\newcommand{\Om}{\Omega}
\newcommand{\abquer}{\overline{(a,b)}}
\newcommand{\htilde}{\tilde{h}}
\newcommand{\Ztilde}{\tilde{Z}}
\providecommand{\abs}[1]{\lvert #1\rvert}
\providecommand{\fnorm}[1]{\lVert #1\rVert_\infty}
\DeclareMathOperator{\Exp}{Exp}
\DeclareMathOperator{\sign}{sign}
\renewcommand{\phi}{\varphi}
\renewcommand{\epsilon}{\varepsilon}
\newcommand{\eps}{\varepsilon}
\renewcommand{\rho}{\varrho}
\begin{document}
\title[Stein's method for the Beta distribution]{Stein's method of exchangeable pairs for the Beta distribution and generalizations}
\author{Christian D\"obler}
\thanks{Universit\'{e} du Luxembourg, Unit\'{e} de Recherche en Math\'{e}matiques \\
christian.doebler@uni.lu \\
{\it Keywords:} Stein's method, exchangeable pairs, Beta distribution, P\'{o}lya urn model}
\begin{abstract}
We propose a new version of Stein's method of exchangeable pairs, which, given a suitable exchangeable pair $(W,W')$ of real-valued random variables, suggests the approximation of the law of $W$ by a suitable absolutely continuous distribution. This distribution is characterized by a first order linear differential Stein operator, whose coefficients $\gamma$ and $\eta$ are motivated by two regression properties satisfied by the pair $(W,W')$. Furthermore, the general theory of Stein's method for such an absolutely continuous distribution is developed and a general characterization result as well as general bounds on the solution to the Stein equation are given. This abstract approach is a certain extension of the theory developed in the papers \cite{ChSh} and \cite{EiLo10}, which only consider the framework of the density approach, i.e. $\eta\equiv1$. As an illustration of our technique we prove a general plug-in result, which bounds a certain distance of the distribution of a given random variable $W$ to a Beta distribution in 
terms of a given exchangeable pair $(W,W')$ and provide new bounds on the solution to the Stein equation for the Beta distribution, which complement the existing bounds from \cite{GolRei13}. The abstract plug-in result is then applied to derive bounds of order $n^{-1}$ for the distance between the distribution of the relative number of drawn red balls after $n$ drawings in a P\'{o}lya urn model and the limiting Beta distribution measured by a certain class of smooth test functions.
\end{abstract}

\maketitle

\section{Introduction}\label{Intro}
Since its introduction in \cite{St72} in 1972 Stein's method has become a famous and useful tool for proving distributional convergence. One of its main advantages over other techniques is that it automatically yields concrete error bounds on various distributional distances. Being first only developed for 
normal approximation it was observed by several authors that Stein's idea of linking a characterizing operator for the target distribution to a differential equation, the \textit{Stein equation}, carries over to many other absolutely continuous and discrete distributions, where, in the discrete case, the differential equation 
has to be replaced by a suitable difference equation. Among those other distributions, to which Stein's method has been successfully extended, are the Poisson distribution (see e.g. \cite{Ch75}, \cite{AGG89} or \cite{BHJ}), the Gamma distribution (see \cite{Luk} or \cite{Rei05}), the exponential distribution 
(see e.g. \cite{CFR11}, \cite{PekRol11} and \cite{FulRos13}), the Laplace distribution \cite{PiRen12} and, more generally, the class of Variance-Gamma distributions \cite{Gau14}. Stein's method for the Beta distribution has been developed independently in the paper \cite{GolRei13} as well as in the preprint \cite{Doe12a}.

Although in both works \cite{GolRei13} and \cite{Doe12a} a rate of convergence for the relative number of drawn red balls 
in a P\'{o}lya urn model was derived using Stein's method for the Beta distribution, the actual approaches were quite different. In \cite{GolRei13} the authors developed a useful and widely applicable technique to find a whole class of characterizing operators for a discrete distribution, whose probability mass function is known explicitly, and compared one of these operators to the Stein operator of the limiting Beta distribution. In contrast, the preprint \cite{Doe12a} built on a coupling approach by developing a new version of the \textit{exchangeable pairs approach} of Stein's method for a rather large class of absolutely continuous distributions on the real line. 
This new version of the exchangeable pairs approach differs from that in the framework of the \textit{density method} as developed in \cite{EiLo10} and \cite{ChSh}, since it allows for a modification of the Stein equation, which is adapted to a given exchangeable pair and does not necessarily rely on the characterization by the density method. Recently, in \cite{LRS14}, a nice generalization of the density method, which does not necessarily assume absolute continuity of the given distribution, was given and, 
as an application, it was shown, how the situation of the P\'{o}lya urn example from \cite{GolRei13} may be fitted into this framework.

The main purpose of the present paper is to give a more easily readable account of the method and ideas from \cite{Doe12a} by keeping the class of Beta distributions on $[0,1]$
and the P\'{o}lya urn model as a running example. In addition, we derive new numerical bounds on the solution to the Stein equation for the Beta distribution and, for smooth test functions, also on its first order derivative. For Lipschitz-continuous test functions, these bounds complement those given in \cite{GolRei13} in the sense that they are neither uniformly worse nor uniformly better in the parameters 
of the Beta distribution. Furthermore, we use a new iterative procedure to obtain uniform bounds for derivatives of any order of the solution to the Beta Stein equation with sufficiently smooth right hand side. Incidentally, this is the first paper to give bounds on higher order derivatives of the solution to the Beta Stein equation. It should be mentioned that, generally, obtaining bounds on higher order derivatives of the solution to the Stein equation is 
quite a difficult problem, because the explicit representations of those derivatives become more and more complicated. Hence, to date bounds on higher order derivatives of the solution are still quite rare in Stein's method. For instance, the paper \cite{Daly08} obtains sharp bounds on higher order derivatives in the context of the normal and exponential distributions by exploiting very peculiar identities and facts about these distributions, which are not available for more general absolutely continuous distributions. Also, if one succeeds in deriving a tractable generator representation of the solution to the Stein equation as suggested in \cite{Bar88}, one can usually use this form of the solution to obtain 
bounds on higher order derivatives. This has been used for the multivariate normal \cite{GolRin96} and for the Gamma distribution \cite{Luk}. However, in contrast to the bounds from \cite{Daly08}, these bounds 
usually do not exhibit the smoothness property of the inverse of the corresponding Stein operator. In the case of the multivariate normal distribution with non-singular covariance matrix, one can combine the generator representation with a partial integration to obtain bounds on higher order derivatives, which demand one fewer order of smoothness from the test function than the bounds from \cite{GolRin96}. This has been accomplished independently in \cite{Gau15} and \cite{Doe12c}.   The recent paper \cite{GPR15} combines bounds obtained from the generator representation with the iterative method from the present article 
in order to obtain new bounds on derivatives of arbitrary order of the solution to the Gamma Stein equation, whose dependence on the shape parameter of the Gamma distribution is superior to previous bounds. 

We also indicate, how our iterative method can be applied to obtain bounds for the solution to a Stein equation for the exponential distribution, which are better than those previously obtained. We thus suggest that exploiting this iterative procedure can become a fruitful technique for a larger class of distributions.

The remainder of this paper is structured as follows: In Section \ref{motivation} the general approach is motivated by means of a natural exchangeable pair in the context of the P\'{o}lya urn model and it is stressed 
by means of this example that the framework of exchangeable pairs within the density approach as developed in \cite{EiLo10} and \cite{ChSh} is not always suitable and why one might want to use a different Stein characterization. Furthermore, our main application, Theorem \ref{mt}, a quantitative distributional limit theorem for the relative number of drawn red balls is stated. Then, motivated by this example, in Section \ref{abstract} a general version of Stein's method for a large class of absolutely continuous distributions adapted to a given exchangeable pair is developed. In Section \ref{Beta} the theory from Section \ref{abstract} is specialized to the class of Beta distributions
and Theorem \ref{mt} is proved. Finally, in Section \ref{proofs} several proofs for statements from Sections \ref{abstract} and \ref{Beta} are given. 

\section*{Acknowledgements}
Most parts of the research which led to this article have been accomplished during the authors PhD studies and, hence, there is a certain overlap with the author's PhD thesis \cite{Doe12c}, see also the unpublished paper \cite{Doe12a}. The author was supported by the DFG via SFB/TR 12 during this time. We also refer to Appendix A of \cite{Doe12c} for a version of de l'H\^{o}pital's rule which covers (locally) absolutely continuous functions and which is general enough to justify all invocations of this famous tool within this article. Finally, Appendix B of \cite{Doe12c} contains some identities about the Gibbs sampling procedure, which are generally useful in the exchangeable pairs version of Stein's method and which will be used in the present paper. I am grateful to an anonymous referee whose detailed and valuable comments and suggestions helped me improve the presentation of my results.

\section{The P\'{o}lya urn model and motivation of our general approach}\label{motivation}
The classical P\'{o}lya urn model can be described as follows. Fix positive integers $r,w$ and $c$. At the beginning an urn contains $r$ red balls and $w$ white balls. At each discrete time point $n\in\N$ a ball is drawn from the urn uniformly at random and this ball together with $c$ other balls of the same colour is 
returned to the urn. If we denote by $S_n$ the number of drawn red balls after the first $n$ drawings, $n\in\N$, then we can write 
\begin{equation}\label{repSn}
 S_n=\sum_{j=1}^n X_j\,,
\end{equation}
where $X_j$ denotes the indicator of the event that the $j$-th drawn ball is red, $j\in\N$. It is known from elementary probability theory that for each $n\in\N$ and all $x_1,\ldots,x_n\in\{0,1\}$ we have  
\begin{equation}\label{jdx}
P(X_1=x_1,\ldots,X_n=x_n)=\frac{\prod_{i=0}^{k-1}(r+ci)\prod_{j=0}^{n-k-1}(w+cj)}{\prod_{l=0}^{n-1}(s+w+cl)}\,,
\end{equation}
where $k:=\sum_{j=1}^n x_j$. In particular, this shows that the sequence $(X_j)_{j\in\N}$ is exchangeable.
It now follows from \eqref{jdx} that for each $k=0,\ldots,n$ we have 
\begin{equation*}
P(S_n=k)=\binom{n}{k}\frac{\prod_{i=0}^{k-1}(r+ci)\prod_{j=0}^{n-k-1}(w+cj)}{\prod_{l=0}^{n-1}(s+w+cl)}\,, 
\end{equation*}
or, with $a:=\frac{r}{c}$ and $b:=\frac{w}{c}$,
\begin{equation}\label{pmSn}
P(S_n=k)=\frac{\binom{-a}{k}\binom{-b}{n-k}}{\binom{-a-b}{n}}\,,
\end{equation}
where, for a real number $x$ and a nonnegative intger $m$, we define the generalized binomial coefficient by
\begin{equation*}
 \binom{x}{m}:=\frac{x(x-1)\cdot\dotsc\cdot(x-m+1)}{m!}\,.
\end{equation*}
The distribution of $S_n$ given by \eqref{pmSn} is usually referred to as the \textit{P\'{o}lya distribution} with parameters $n\in\N$ and  $a,b>0$. It is a well-known fact that the distribution of $\frac{1}{n}S_n$ converges weakly as $n\to\infty$ to the distribution $Beta(a,b)$ with parameters $a$ and $b$, where, for general $a,b>0$, the 
Beta distribution $Beta(a,b)$ with parameters $a$ and $b$ is defined by the density function $p:=p_{a,b}$ with 
\begin{equation}\label{densbeta}
p_{a,b}(x):=\begin{cases}
             \frac{1}{B(a,b)}x^{a-1}(1-x)^{b-1},&0<x<1\\
             0,&\text{else.}
            \end{cases}
 \end{equation}
Here, $B(a,b)$ denotes the Euler Beta function $B(a,b)=\int_0^1 x^{a-1}(1-x)^{b-1}dx$ which is related to the Gamma function $\Gamma(t)=\int_0^\infty x^{t-1}e^{-x}dx$ via 
\begin{equation}\label{betagamma}
 B(a,b)=\frac{\Gamma(a)\Gamma(b)}{\Gamma(a+b)}\,.
\end{equation}
From now on denote by 
\begin{equation}\label{W}
 W:=W_n:=\frac{1}{n}S_n=\frac{1}{n}\sum_{j=1}^n X_j
\end{equation}
the relative number of drawn red balls after the first $n$ drawings from the urn. 
Denote by $C^{1,1}([0,1];\R)$ the space of all continuously differentiable real-valued functions on $[0,1]$ which have a Lipschitz-continuous derivative.
\begin{theorem}\label{mt}
Let $Z\sim Beta(a,b)$. For each $h\in C^{1,1}([0,1];\R)$ we have that 
\begin{align*}
 &\Bigl|E[h(W)]-E[h(Z)]\Bigr|\notag\\
 &\leq\frac{C(a,b)}{n}\fnorm{h'}\Biggl(\frac{ab}{a+b}+\frac{(a+b)C(a+1,b+1)}{6}\Bigl(1+\frac{a+b-1}{n}\Bigr)\Biggr)\notag\\
 &\;+\frac{C(a+1,b+1)}{6n}\fnorm{h''}\Bigl(1+\frac{a+b-1}{n}\Bigr)\,,
\end{align*}
where the constants $C(\cdot,\cdot)$ are defined in \eqref{lipcon1} and \eqref{lipcon2} below and $\fnorm{h''}$ denotes the minimum Lipschitz constant of $h'$.
\end{theorem}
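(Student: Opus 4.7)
The plan is to apply the abstract Stein's-method plug-in for Beta approximation from Section \ref{abstract} (specialized to the Beta case in Section \ref{Beta}) to a carefully chosen exchangeable pair $(W, W')$. Exploiting that $(X_1, \ldots, X_n)$ is exchangeable, I would construct $W'$ by Gibbs resampling: draw an index $I$ uniformly on $\{1, \ldots, n\}$ independently of $(X_1, \ldots, X_n)$, replace $X_I$ by a fresh sample $X_I^*$ from the conditional distribution of $X_I$ given the other $X_j$, and put
\[
  W' := W + \tfrac{1}{n}(X_I^* - X_I).
\]
The pair $(W, W')$ is then exchangeable, and clearly $|W' - W| \leq 1/n$.

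The next step is to verify the two regression identities required by the abstract framework. The P\'olya-urn formula
\[
  P(X_i^* = 1 \mid X_j,\, j \neq i) = \frac{r + c\sum_{j \neq i} X_j}{r + w + c(n-1)}
\]
makes $E[X_i^* - X_i \mid X_1, \ldots, X_n]$ explicit; summing over $i$ and writing $\lambda := 1/(n(a+b+n-1))$ gives
\[
  E[W' - W \mid W] = \lambda\bigl(a - (a+b)W\bigr)
\]
with no remainder, so the drift $\gamma(x) = a - (a+b)x$ of the Beta Stein operator is matched exactly. A similar but longer computation, which exploits $X_i, X_i^* \in \{0, 1\}$ to reduce $(X_i^* - X_i)^2$ to $|X_i^* - X_i|$, yields
\[
  E[(W' - W)^2 \mid W] = 2\lambda W(1-W) + R, \qquad |R| \leq \tfrac{\lambda}{n}\bigl(a(1-W) + bW\bigr),
\]
so the diffusion coefficient $\eta(x) = x(1-x)$ of the Beta Stein operator $\A f(x) = x(1-x) f'(x) + (a - (a+b)x) f(x)$ is matched up to an $O(\lambda/n)$ remainder.

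With the pair calibrated, the abstract plug-in result of Section \ref{abstract} produces, for $f$ solving $\A f = h - E[h(Z)]$, a bound of the form
\[
  \bigl|E[h(W)] - E[h(Z)]\bigr| \leq \tfrac{1}{2\lambda}\fnorm{f'}\,E|R| + \tfrac{1}{4\lambda}\fnorm{f''}\,E|W' - W|^3,
\]
obtained via the antisymmetrization identity $E[(W'-W)(f(W) + f(W'))] = 0$ and a second-order Taylor expansion of $f(W')$ around $W$. The cubic moment is handled via $|W' - W|^3 \leq \tfrac{1}{n}(W'-W)^2$ together with the second regression; the remainder moment $E|R|$ reduces to $E[W] = a/(a+b)$ for the P\'olya urn and accounts for the factor $ab/(a+b)$ appearing in the statement. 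The new Stein-solution bounds from Section \ref{Beta} then furnish $\fnorm{f'} \leq C(a,b)\fnorm{h'}$ and $\fnorm{f''} \leq C(a+1, b+1)\fnorm{h''}$, the constants defined in \eqref{lipcon1} and \eqref{lipcon2}; the second bound follows from the iterative scheme based on the observation that $f'$ itself satisfies a Beta$(a+1, b+1)$ Stein equation driven by $h'$. Substituting everything and collecting terms yields the inequality of Theorem \ref{mt}.

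The main obstacle is the second-regression calculation: the conditional second moment must be expressed as a function of $W$ alone, which requires careful bookkeeping when summing the $X_i$-dependent probabilities $p_i = (r + c(nW - X_i))/(r + w + c(n-1))$ and isolating the leading term $2\lambda W(1-W)$ in clean form. Once the regressions are in hand and one observes that the first of them has zero remainder, the rest is assembly: tracking the constants coming out of the abstract plug-in combined with the Beta Stein-solution bounds, and verifying that the factor $1 + (a+b-1)/n$ in Theorem \ref{mt} arises correctly from the explicit form of $E[(W'-W)^2]$.
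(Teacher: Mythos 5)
Your construction and calibration of the pair match the paper's: resampling a uniformly chosen coordinate gives, by exchangeability of $(X_1,\dotsc,X_n)$, the same pair the paper builds by resampling $X_n$ only, and your two regression identities (with $\la=1/(n(a+b+n-1))$, zero remainder in the linear regression, and remainder $\frac{\la}{n}(a(1-W)+bW)$ in the quadratic one) agree with Propositions \ref{polyaprop1} and \ref{polyaprop2}; computing $E\abs{S}=ab/(n(a+b))$ then yields the first term of the bound. Two steps of your assembly, however, would not reproduce the stated constants. First, the antisymmetrization identity combined with a second-order Taylor expansion gives the coefficient $\frac{1}{4\la}$ on $\fnorm{f''}E\abs{W'-W}^3$, as you wrote; the paper instead Taylor-expands the antiderivative $G(x)=\int_{x_0}^x f(y)dy$ and keeps the integral form of the remainder, using $\int_0^1 s(1-s)ds=1/6$, which is where the $\frac{1}{6\la}$ comes from. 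Moreover, the factor $\frac{1}{6n}\bigl(1+\frac{a+b-1}{n}\bigr)=\frac{1}{6\la n^3}$ arises from the crude bound $\abs{W'-W}^3\leq n^{-3}$, not from feeding $\abs{W'-W}^3\leq\frac{1}{n}(W'-W)^2$ back into the second regression as you propose (that route gives a differently shaped, not the stated, bound).

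Second, and more substantively, the bound $\fnorm{f''}\leq C(a+1,b+1)\fnorm{h''}$ is not what the iterative scheme delivers. Differentiating the $Beta(a,b)$ Stein equation shows that $f'$ solves the $Beta(a+1,b+1)$ Stein equation with right-hand side $h_2=h'-\ga' f=h'+(a+b)f$, not $h'$ alone; applying the first-derivative bound to that equation gives $\fnorm{f''}\leq C(a+1,b+1)\bigl(\fnorm{h''}+(a+b)\fnorm{f'}\bigr)\leq C(a+1,b+1)\fnorm{h''}+(a+b)C(a+1,b+1)C(a,b)\fnorm{h'}$, as in part (c) of Proposition \ref{betabounds}. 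The cross term $(a+b)C(a+1,b+1)C(a,b)\fnorm{h'}$ is precisely the origin of the summand $\frac{(a+b)C(a+1,b+1)}{6}\bigl(1+\frac{a+b-1}{n}\bigr)$ inside the parentheses multiplying $\frac{C(a,b)}{n}\fnorm{h'}$ in Theorem \ref{mt}; with your version of the second-derivative bound that term never appears, so ``collecting terms'' cannot produce the stated inequality, and the bound you do invoke is left unproved. One must also verify, as Proposition \ref{genprop5} does, that $E[h_2(\Ztilde)]=0$ and that $f'$ is the \emph{standard} solution of the iterated equation, so that the $Beta(a+1,b+1)$ bounds are actually applicable to it.
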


The proof will be given in Section \ref{Beta}.
In the paper \cite{GolRei13} the authors even proved a concrete upper bound of order $n^{-1}$ for the Wasserstein distance between the distributions of $Z$ and $W$ from Theorem \ref{mt} and also showed that the rate $n^{-1}$ is optimal. 
Since the Wasserstein distance is induced by $1$-Lipschitz test functions, this implies that their result is stronger than Theorem \ref{mt} as far as the class of test functions is concerned. 
However, it should be mentioned that their method of comparing Stein operators can only be applied in situations, where the distribution of $W$ is explicitly known. Contrarily, the exchangeable pairs technique which is used here, in general, seems to be more flexible in this respect. 
For instance, our plug-in result, Theorem \ref{betaplugin} below, might be beneficial for other applications, where the exact distribution of $W$ is not at hand. Moreover, even in the situation of Theorem \ref{mt} there exist 
parameters $a,b>0$ and test functions $h$ such that our bound is smaller than the one obtained in \cite{GolRei13}. To see this, fix $n$ and let $a=b$ tend to zero. Also, let $h\in C^{1,1}([0,1];\R)$ be such that
$\fnorm{h'}=\fnorm{h''}=1$. Then, as $C(a,a)=4$ if $a\leq 1$ and by continuity of $C(a,a)$ in $a$, we see that the bound given in Theorem \ref{mt} converges to  $\frac{2}{3n}(1-1/n)\leq\frac{2}{3n}$, whereas the bound from \cite{GolRei13} converges to the bigger value $\frac{9}{2n}$.\\

Recall that a pair $(X,X')$ of random elements on a common probability space is called exchangeable, if 
\begin{equation*}
 (X,X')\stackrel{\D}{=}(X',X)\,.
\end{equation*}
Representation \eqref{W} for $W$ suggests constructing another random variable $W'$ such that $W$ and $W'$ make up an exchangeable pair using a Gibbs sampling procedure. Noticing that also the random variables $X_1,\dotsc,X_n$ are exchangeable, the construction of $W'$ can be simplified to the following:\\
Observe $X_1=x_1,\dotsc, X_n=x_n$ and construct $X_n'$ according to the distribution $\calL(X_n|X_1=x_1,\dotsc, X_{n-1}=x_{n-1})$. Then, letting
\begin{equation}\label{W'}
 W':=W-\frac{1}{n}X_n+\frac{1}{n}X_n'
\end{equation}
the pair $(W,W')$ is exchangeable. Note that $\abs{W-W'}\leq\frac{1}{n}$ is small which suggests that the exchangeable pair $(W,W')$ be beneficial for a Stein's method approach to the proof of weak convergence of $\calL(W_n)$ to $Beta(a,b)$. From the exchangeable pairs approach within normal approximation 
(see e.g. \cite{St86}, \cite{CheShaSing} or \cite{CGS}) and for non-normal approximation (see \cite{EiLo10} and \cite{ChSh}) we know that exchangeability of $(W,W')$ is not enough to guarantee distributional closeness of $W$ and of $Z\sim Beta(a,b)$ but that a further \textit{regression property} has to be satisfied.  

\begin{prop}\label{polyaprop1}
The exchangeable pair $(W,W')$ satisfies the regression property
\[E\bigl[W'-W|W\bigr]=\frac{a+b}{n(a+b+n-1)}\Bigl(\frac{a}{a+b}-W\Bigr)=\la\ga_{a,b}(W)\,,\]
where $\ga_{a,b}(x)=(a+b)\bigl(\frac{a}{a+b}-x\bigr)$ and $\la=\la_n= \frac{1}{n(a+b+n-1)}$.
\end{prop}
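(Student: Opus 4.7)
The plan is to compute $E[W' - W \mid W]$ directly from the Gibbs construction \eqref{W'} by computing $E[X_n' - X_n \mid W]$, since $W' - W = \frac{1}{n}(X_n' - X_n)$. Because $W$ is a function of $S_n$ (and in fact $\sigma(W) = \sigma(S_n)$), everything reduces to conditional expectations given $S_n$.

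First I would dispose of the $X_n$ term. By the exchangeability of $(X_1,\dotsc,X_n)$ noted from \eqref{jdx}, all $E[X_j\mid S_n]$ coincide, and their sum equals $S_n$; therefore $E[X_n\mid S_n]=\frac{S_n}{n}=W$. Next I would identify the conditional law used to build $X_n'$. From \eqref{jdx} (or, equivalently, from the urn dynamics), for $x_1,\dotsc,x_{n-1}\in\{0,1\}$ with $s_{n-1}:=\sum_{j=1}^{n-1}x_j$ one computes
\begin{equation*}
P(X_n=1\mid X_1=x_1,\dotsc,X_{n-1}=x_{n-1})=\frac{r+c\,s_{n-1}}{r+w+c(n-1)}=\frac{a+s_{n-1}}{a+b+n-1},
\end{equation*}
after dividing numerator and denominator by $c$ and using $a=r/c$, $b=w/c$. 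Since $X_n'$ is drawn from this conditional law and is conditionally independent of $X_n$ given $X_1,\dotsc,X_{n-1}$, this yields
\begin{equation*}
E[X_n'\mid X_1,\dotsc,X_n]=\frac{a+S_{n-1}}{a+b+n-1}.
\end{equation*}

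Now I would take further conditional expectation with respect to $S_n$. Using $S_{n-1}=S_n-X_n$ together with $E[X_n\mid S_n]=W$ gives $E[S_{n-1}\mid S_n]=S_n-W=(n-1)W$, so
\begin{equation*}
E[X_n'\mid W]=\frac{a+(n-1)W}{a+b+n-1}.
\end{equation*}
Subtracting $E[X_n\mid W]=W$ and simplifying the numerator as $a+(n-1)W-(a+b+n-1)W=a-(a+b)W$ yields
\begin{equation*}
E[X_n'-X_n\mid W]=\frac{a-(a+b)W}{a+b+n-1}=\frac{a+b}{a+b+n-1}\Bigl(\tfrac{a}{a+b}-W\Bigr).
\end{equation*}
Dividing by $n$ produces the claimed expression with $\ga_{a,b}(x)=(a+b)(\frac{a}{a+b}-x)$ and $\la=\frac{1}{n(a+b+n-1)}$.

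There is no real obstacle: the only subtle point is to verify, via the joint probability \eqref{jdx}, the standard P\'olya urn formula for the conditional law of $X_n$ given $X_1,\dotsc,X_{n-1}$, after which the rest is linear algebra on conditional expectations together with the symmetry $E[X_j\mid S_n]=W$.
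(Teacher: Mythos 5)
Your proposal is correct and follows essentially the same route as the paper: compute $E[X_n\mid W]=W$ by exchangeability, identify the conditional law $P(X_n=1\mid X_1,\dotsc,X_{n-1})=\frac{r+cS_{n-1}}{r+w+c(n-1)}$ from the urn dynamics, condition down to $\sigma(W)$ via the tower property, and simplify. The only difference is cosmetic (you normalize by $c$ to pass to $a,b$ slightly earlier than the paper does).
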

\begin{proof}
We have $W'-W=\frac{X_n'}{n}- \frac{X_n}{n}$ and by exchangeability of $X_1,\ldots,X_n$ it clearly holds that 
$E[X_n|W]=E[X_n|S_n]=\frac{1}{n}S_n=W$. Also, by the definition of $X_n'$ and since $X_n'$ only assumes the values $0$ and $1$ we have 
for any $x_1,\ldots,x_{n-1}\in\{0,1\}$
\begin{align*}
&E[X_n'|X_1=x_1,\ldots,X_n=x_n]=E[X_n|X_1=x_1,\ldots,X_{n-1}=x_{n-1}]\\
&=P(X_n=1|X_1=x_1,\ldots,X_{n-1}=x_{n-1})=\frac{r+c\sum_{j=1}^{n-1}x_j}{r+w+c(n-1)}\,,
\end{align*}
and hence,
\begin{align*}
 E[X_n'|X_1,\ldots,X_n]=\frac{r+c\sum_{j=1}^{n-1}X_j}{r+w+c(n-1)}=\frac{r+cnW-cX_n}{r+w+c(n-1)}\,.
\end{align*}
Thus, since $\sigma(W)\subseteq\sigma(X_1,\ldots,X_n)$, we obtain
\begin{align*}
E[X_n'|W]&=E\Bigl[E\bigl[X_n'|X_1,\ldots,X_n\bigr]\,|W\Bigr]=\frac{r+cnW-cW}{r+w+c(n-1)}\\
&=\frac{r+c(n-1)W}{r+w+c(n-1)}=\frac{a+(n-1)W}{a+b+n-1}\,.
\end{align*}
Finally, we have
\begin{align*}
E[W'-W|W]&=\frac{1}{n}E[X_n'-X_n|W]=\frac{1}{n}\frac{a+(n-1)W}{a+b+n-1}-\frac{1}{n}W\\
&= \frac{a-(a+b)W}{n(a+b+n-1)}=\frac{a+b}{n(a+b+n-1)}\Bigl(\frac{a}{a+b}-W\Bigr)\,,
\end{align*}
as was to be shown.\\
\end{proof}

From the theory developed in \cite{EiLo10} and in \cite{ChSh} we know that if a given exchangeable pair $(W,W')$ satisfies a regression property of the form 
\begin{equation}\label{densreg}
 \frac{1}{\lambda}E\bigl[W'-W\bigl|W\bigr]=\psi(W)+R\,,
\end{equation}
where $\lambda>0$ is a typically small constant and $R$ is negligible in size, then $\calL(W)$ can be approximated by the absolutely continuous distribution whose density has logarithmic derivative $\psi$, if and only if the following additional condition is satisfied: It must be the case that 
\begin{equation}\label{lln}
 \frac{1}{2\lambda}E\bigl[(W'-W)^2\bigl|W\bigl]\approx 1\,,
\end{equation}
which is often paraphrased as that the term on the left hand side in \eqref{lln} must satisfy a law of large numbers in order for the approximation to be accurate. Comparing \eqref{densreg} to the statement of Proposition \ref{polyaprop1} we see that according to the theory from \cite{EiLo10} or \cite{ChSh} the only possibility 
would be to approximate the distribution of $W$ by a distribution whose density has logarithmic derivative equal to (a constant multiple) of 
\begin{equation*}
 \frac{a}{a+b}-x\,,
\end{equation*}
for $x$ in the support of this density, which should be equal to $[0,1]$ in this case.
Since the logarithmic derivative $\psi_{a,b}$ of the density $p_{a,b}$ of $Beta(a,b)$ is given by 
\begin{equation}\label{logdevbeta}
 \psi_{a,b}(x)=\frac{d}{dx}\log p_{a,b}(x)=\frac{p_{a,b}'(x)}{p_{a,b}(x)}=\frac{a-1-(a+b-2)x}{x(1-x)}\,,\quad 0<x<1\,,
\end{equation}
and we already know that 
\[W_n\stackrel{\D}{\longrightarrow}Beta(a,b)\quad\text{as }n\to\infty\,,\]
we conclude by way of contradiction that the law of large numbers \eqref{lln} cannot hold. Indeed, we will see in Proposition \ref{polyaprop2} below that that the term on the left hand side of \eqref{lln} is close to the non-constant random quantity $W(1-W)$ rather than to the constant $1$.
From Proposition \ref{polyaprop1} and some experience with the exchangeable pairs approach within Stein's method we conclude that it would be desirable to have a Stein operator $L$ of the form 
\begin{equation}\label{steinopbeta}
Lg(x)=\eta_{a,b}(x)g'(x)+\gamma_{a,b}(x)g(x) 
\end{equation}
for the Beta distribution $Beta(a,b)$.
Indeed, in Section \ref{Beta} we will see that a random variable $Z\sim Beta(a,b)$ satisfies the Stein identity 
\begin{equation}\label{steinidbeta}
 E\Bigl[Z(1-Z)g'(Z)+(a+b)\Bigl(\frac{a}{a+b}-Z\Bigr)g(Z)\Bigr]=0
\end{equation}
for all $g$ in a suitable class of functions, i.e. we can let $\eta_{a,b}(x)=\eta(x)=x(1-x)$. Evidently, the Stein identity \eqref{steinidbeta} was first found in \cite{Sch01} and it was also used in \cite{GolRei13}.
The statement of the following Proposition will make it possible to exploit the above constructed exchangeable pair $(W,W')$ in connection with the Stein identity \eqref{steinidbeta} in Section \ref{Beta}.
\begin{prop}\label{polyaprop2}
For the above constructed exchangeable pair $(W,W')$ we have
\[E\bigl[(W'-W)^2|W\bigr]= \frac{(2n+b-a)W-2nW^2+a }{n^2(a+b+n-1)}\]
and hence
\[\frac{1}{2\la}E\bigl[(W'-W)^2|W\bigr]=W(1-W)+\frac{b-a}{2n}W+\frac{a}{2n}\,.\]
\end{prop}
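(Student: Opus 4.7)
The plan is to reduce the computation of $E[(W'-W)^2|W]$ to three conditional expectations, namely $E[X_n|W]$, $E[X_n'|W]$ and $E[X_nX_n'|W]$, and then just do the algebra. Since $W' - W = (X_n' - X_n)/n$ and both $X_n$ and $X_n'$ take values in $\{0,1\}$, the identity $X_n^2 = X_n$ and $(X_n')^2 = X_n'$ gives
\begin{equation*}
 (W'-W)^2 = \frac{1}{n^2}\bigl(X_n + X_n' - 2X_nX_n'\bigr)\,,
\end{equation*}
so the problem reduces to conditional expectations that are linear in $X_n$, $X_n'$, $X_nX_n'$.

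The first two of these are essentially already available from the proof of Proposition \ref{polyaprop1}: exchangeability of $(X_1,\dotsc,X_n)$ gives $E[X_n|W] = W$, and the intermediate computation there already shows that
\begin{equation*}
 E[X_n'|X_1,\dotsc,X_n] = \frac{a + nW - X_n}{a+b+n-1}\,,
\end{equation*}
whence taking conditional expectation given $W$ yields $E[X_n'|W] = \bigl(a + (n-1)W\bigr)/(a+b+n-1)$. For the cross term I would use the tower property together with the fact that $X_n$ is $\sigma(X_1,\dotsc,X_n)$-measurable to write
\begin{equation*}
 E[X_nX_n'|X_1,\dotsc,X_n] = X_n\cdot\frac{a + nW - X_n}{a+b+n-1} = \frac{(a-1)X_n + nWX_n}{a+b+n-1}\,,
\end{equation*}
where I again used $X_n^2 = X_n$. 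Conditioning on $W$ and applying $E[X_n|W] = W$ gives
\begin{equation*}
 E[X_nX_n'|W] = \frac{(a-1)W + nW^2}{a+b+n-1}\,.
\end{equation*}

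Plugging these three expressions into $E[X_n + X_n' - 2X_nX_n'|W]$ and combining everything over the common denominator $a+b+n-1$, the coefficient of $W$ becomes $(a+b+n-1) + (n-1) - 2(a-1) = b - a + 2n$, the $W^2$-term is $-2n$, and the constant is $a$. Dividing by $n^2$ yields exactly the first formula of the proposition. The second formula is then immediate: multiplying by $1/(2\lambda) = n(a+b+n-1)/2$ cancels the denominator, and splitting $(2n+b-a)W/(2n) = W + (b-a)W/(2n)$ together with $-2nW^2/(2n) = -W^2$ produces $W(1-W) + \frac{b-a}{2n}W + \frac{a}{2n}$. There is no real obstacle here beyond careful bookkeeping of the indicator identities and the denominators; the only point that requires a moment's thought is the use of $X_n^2 = X_n$ twice (inside the expansion of $(X_n'-X_n)^2$ and inside $X_n \cdot E[X_n'|X_1,\dotsc,X_n]$), which is what makes the whole calculation collapse to a linear combination of $W$ and $W^2$.
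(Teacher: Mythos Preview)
Your proof is correct and follows essentially the same computation as the paper. The only cosmetic difference is that the paper invokes a general Gibbs sampling identity (cited from Appendix B of \cite{Doe12c}) to write $E[(W'-W)^2\,|\,W]$ in terms of $E[X_n|W]$, $E[E[X_n^2|X_1,\dotsc,X_{n-1}]\,|\,W]$ and $E[X_nE[X_n|X_1,\dotsc,X_{n-1}]\,|\,W]$, whereas you derive the same decomposition directly from $(X_n'-X_n)^2=X_n+X_n'-2X_nX_n'$ and the tower property; after that the three conditional expectations and the algebra are identical.
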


\begin{proof}
From general facts about Gibbs sampling (see e.g. Appendix B in \cite{Doe12c}) it is known that
\begin{align*}\label{polyaeq1}
 E\bigl[(W'-W)^2|W\bigr]&=\frac{1}{n^2}\Bigl(E[X_n|W]+E\bigl[E[X_n^2|X_1,\ldots,X_{n-1}]\,|\,W\bigr]\\
&\,-2E\bigl[X_nE[X_n|X_1,\ldots,X_{n-1}]|W\bigr]\Bigr)\,.\nonumber
\end{align*}
Since $X_n^2=X_n$ we have from the proof of Proposition \ref{polyaprop1} that
\[E[X_n^2|X_1,\ldots,X_{n-1}]=E[X_n|X_1,\ldots,X_{n-1}]=\frac{a+nW-X_n}{a+b+n-1}\,,\]
and hence
\[E\bigl[E[X_n^2|X_1,\ldots,X_{n-1}]\,|\,W\bigr]=\frac{a+(n-1)W}{a+b+n-1}\,,\]
where we have used $E[X_n|W]=W$ again. Finally, we compute 
\begin{align*}
E\bigl[X_nE[X_n|X_1,\ldots,X_{n-1}]|W\bigr]&=\frac{1}{a+b+n-1}E\bigl[aX_n+nWX_n-X_n^2\,\bigl|\,W\bigr]\\
&=\frac{aW+nW^2-W}{a+b+n-1}=\frac{(a-1)W+nW^2}{a+b+n-1}\,.
\end{align*}
Putting pieces together, we eventually obtain
\begin{align}\label{pprop2eq1}
 E\bigl[(W'-W)^2|W\bigr]&=\frac{1}{n^2}\Bigl(W+\frac{a+(n-1)W}{a+b+n-1}-2\frac{(a-1)W+nW^2}{a+b+n-1}\Bigr)\notag\\
&=\frac{(2n+b-a)W-2nW^2+a }{n^2(a+b+n-1)}\,.
\end{align}
The last assertion easily follows from \eqref{pprop2eq1} and from $\la= \frac{1}{n(a+b+n-1)}$.\\
\end{proof}

One main aspect of the theoretical contribution of this article is to emphasize that it is no coincidence that 
\begin{equation*}
\frac{1}{2\lambda}E\bigl[(W'-W)^2\bigl|W\bigl]\approx \eta(W)=W(1-W)\,,
\end{equation*}
but that this is a natural replacement of condition \eqref{lln} from the density approach to our class of Stein operators 
of the form \eqref{gensteinop} below.\\
We end this motivational section by an abstraction of the ideas in the context of the P\'{o}lya urn model and the limiting Beta distribution above. Suppose we are given a sequence of random variables $W=W_n$ of which we know that, as $n\to\infty$, it converges in distribution to a random variable $Z$ with an absolutely continuous distribution 
and density $p$ with respect to the Lebesgue measure. We will also assume that $p$ itself is absolutely continuous (on each compact subinterval of its support $\abquer$, where $-\infty\leq a<b\leq\infty$ are extended real numbers). Suppose also that we can naturally
construct a random variable $W'$, a small random perturbation of $W$, such that $(W,W')$ is an exchangeable pair, $\abs{W-W'}$ is small in a certain sense and that a regression property of the form 
\begin{equation}\label{genreg}
\frac{1}{\lambda}E\bigl[W'-W\,\bigl|\,W\bigr]=\gamma(W)+R 
\end{equation}
holds, where $\gamma$ is a certain function on the support of $\calL(Z)$, $\la>0$ is constant and $R$ is a negligible remainder term. The goal is to compute a rate of convergence for the distributional convergence $W\rightarrow Z$ by Stein's method of exchangeable pairs for $\calL(Z)$. 
By the above reasoning it would be beneficial to have a characterizing Stein operator $L$ for $Z$ of the form
\begin{equation}\label{gensteinop}
 Lg(x)=\eta(x)g'(x)+\gamma(x)g(x)\,,
\end{equation}
where $\eta$ is a function that still has to be found. One might suppose that, in order that $L$ characterizes $\calL(Z)$, given the density $p$ of $Z$ and the function $\gamma$ the function $\eta$ is unique but we will see that this is only so up to a constant multiple of $p^{-1}$. 
Note that by exchangeability 
\begin{equation}\label{Egamma}
0=\frac{1}{\lambda}E[W'-W]=E[\gamma(W)]+E[R]\approx E[\gamma(W)]\approx E[\gamma(Z)]\,,
\end{equation}
where the first approximation is by the assumption that $R$ is of negligible order and the second is by the fact that $W$ converges to $Z$ in distribution. Hence, it is natural to assume from the outset that $E[\gamma(Z)]=0$. In particular, we should assume that $E\abs{\gamma(Z)}<\infty$. A natural question is, 
given $p$ and $\gamma$, if there is a general formula for the function $\eta$. In the preprint \cite{Doe12a} the first order linear differential equation 
\begin{equation}\label{etaode}
 \eta'(x)=\gamma(x)-\psi(x)\eta(x)
\end{equation}
was found by making, for a given test function $h$, the ansatz $g_h(x)=\alpha(x)f_h(x)$ for the solutions $g_h$ of the Stein equation 
\begin{equation}\label{gensteineq}
 \eta(x)g'(x)+\gamma(x)g(x)=h(x)-E[h(Z)]
\end{equation}
belonging to the operator \eqref{gensteinop} and $f_h$ of the Stein equation
\begin{equation}\label{denssteineq}
 f'(x)+\psi(x)f(x)=h(x)-E[h(Z)]
\end{equation}
corresponding to the density approach. Here, again $\psi$ denotes the logarithmic derivative of $p$. In this paper we follow a different, more direct reasoning. If $\eta$ is such that \eqref{gensteinop} is characterizing $\calL(Z)$, then, for suitable functions $g$ by partial integration:
\begin{align}\label{etaheu}
 E\bigl[\eta(Z)g'(Z)\bigr]&=\int_a^b\eta(x)p(x)g'(x)dx=g\eta p\bigl|_a^b-\int_a^b\bigl(\eta'(x)p(x)+p'(x)\eta(x)\bigr)g(x)dx\notag\\
 &=g\eta p\bigl|_a^b-E\bigl[\bigl(\eta'(Z)+\psi(Z)\eta(Z)\bigr)g(Z)\bigr]\,.
 \end{align}
Thus, if we want this expression to equal 
\begin{equation*}
 g\eta p\bigl|_a^b-E\bigl[\gamma(Z)g(Z)\bigr]\,,
\end{equation*}
then from \eqref{etaheu} we conclude that $\eta$ must satisfy \eqref{etaode}. Of course, \eqref{etaode} can be solved by the method of variation of the constant and it turns out that 
\begin{equation}\label{defeta}
 \eta(x):=\frac{1}{p(x)}\int_a^x\gamma(t)p(t)dt\,,\quad a<x<b\,,
\end{equation}
is a particular solution which even satisfies $(\eta p)(a+)=(\eta p)(b-)=0$ whenever $E[\gamma(Z)]=0$ and, hence, the boundary conditions 
\begin{equation}\label{boundary}
(g\eta p)(a+)=\lim_{x\downarrow a}g(x)\eta(x)p(x)=0=\lim_{x\uparrow b}g(x)\eta(x)p(x)=(g\eta p)(b-)\,, 
\end{equation}
hold for each regular enough, say e.g. bounded, function $g$.  
Also note that every other solution to \eqref{etaode} has the form 
\begin{equation*}
\eta_\kappa(x)=\eta(x)+\frac{\kappa}{p(x)} 
\end{equation*}
for some constant $\kappa$. In principle, the particular choice of $\kappa$ is arbitrary and the choice $\kappa\not=0$ sometimes even yields better behaved solutions $g_h$ to the Stein equation \eqref{gensteineq}. In fact, it is easy to see from \eqref{gensteinsol} below that the choice $\kappa\not=0$ automatically implies $g_h(a+)=g_h(b-)=0$.
 Also, sometimes the choice $\kappa\not=0$ is implicit in the density approach. For instance, if $a>-\infty$, $b<\infty$ and the density $p$ is such that $0\not=p(a+)=p(b-)\in\R$, then one can easily see that $\ga(x):=\psi(x)$ satisfies 
 \begin{equation*}
  E[\ga(Z)]=\int_a^b p'(x)dx=p(b-)-p(a+)=0
 \end{equation*}
 but 
 \begin{equation*}
 \eta(x)=\frac{p(x)-p(a+)}{p(x)}\not=1\,,\quad a<x<b\,. 
 \end{equation*}
Hence, in all these cases, using the density approach implicitly entails choosing $\eta_\kappa$ with $\kappa=p(a+)$. When developing the general theory in Section \ref{abstract} we restrict ourselves to the solution $\eta$ given by \eqref{defeta}, i.e to $\kappa=0$. 
We thus already mention at this point that the density approach for $p$ is included in the theory presented in Section \ref{abstract} if and only if 
\begin{equation*}
p(a+)=p(b-)=0\,.
\end{equation*}
However, at least if $\ga(x)=c(E[Z]-x)$, it turns out that in many cases $\eta$ given by \eqref{defeta} has a neat analytical representation, e.g. it is given by a polynomial of degree 
at most $2$, whereas the choice $\kappa\not=0$ would introduce a complicated coefficient into \eqref{gensteineq} originating from the term $p(x)^{-1}$. For instance, if $Z\sim N(0,1)$ is standard normally distributed and $\gamma(x)=-x$, then \eqref{defeta} yields $\eta\equiv1$, whereas the general expression 
is $\eta_\kappa(x)=1+\kappa e^{x^2/2}$, which is difficult to handle in practice. Furthermore, if $p$ is not bounded away from zero, then $\kappa\not=0$ gives an unbounded function $\eta_\kappa$, whereas $\eta$ given by \eqref{defeta} usually is bounded, at least if $a>-\infty$ and $b<\infty$ (see, e.g. Proposition \ref{etaprop} below).\\
In the next section we will see that under certain mild conditions on the density $p$ of $Z$ and on the coefficient $\gamma$ which, of course, needs not originate from an exchangeable pair, the operator 
$L$ given by \eqref{gensteinop} is indeed characterizing $\calL(Z)$ and prove bounds on the corresponding Stein equation \eqref{gensteineq} for suitable test functions $h$.
Finally, we want to propose a strategy of how to proceed, if, contrarily to the above reasoning, we do not know the limiting density $p$ from the outset but are only given 
an exchangeable pair $(W,W')$ such that \eqref{genreg} holds and also 
\begin{equation}\label{secmom}
 \frac{1}{2\lambda}E\bigl[(W'-W)^2\bigl|W\bigr]=\eta(W)+S
\end{equation}
is satisfied with the same constant $\lambda>0$ and a small remainder term $S$, where $\eta$ now is a certain given function, which is positive on a certain open interval $J=(a,b)\subseteq\R$, where $\gamma$ is also defined. Note that from \eqref{etaode} we have for the logarithmic derivative $\psi$ of the sought density $p$ that 
\[\psi=\frac{\gamma-\eta'}{\eta}\]
and, hence, for $x\in J$ and $x_0\in J$ an arbitrary point, we have
\begin{align}\label{densform1}
 p(x)&=p(x_0)\exp\Bigl(\int_{x_0}^x\psi(t)dt\Bigr)=\frac{p(x_0)\eta(x_0)}{\eta(x)}\exp\Bigl(\int_{x_0}^x\frac{\gamma(t)}{\eta(t)}dt\Bigr)\notag\\
 &=\frac{K}{\eta(x)}\exp\Bigl(\int_{x_0}^x\frac{\gamma(t)}{\eta(t)}dt\Bigr)\,.
 \end{align}

 Here, of course, $K=p(x_0)\eta(x_0)$ is the normalization constant. Formula \eqref{densform1} shows that $p$ is uniquely determined by $\gamma$ and $\eta$. Furthermore, in Theorem \ref{etagammatheo} we will give precise criteria for $\gamma$ and $p$ defined by \eqref{densform1} to satisfy 
 \[\int_a^b\gamma(t)p(t)dt=0\]
and for $\eta$ to satisfy \eqref{defeta} so that the results of the theory developed in Section \ref{abstract} can in fact be applied. This, together with Proposition \ref{genpluginprop1} and Remark \ref{plugrem} (iii), suggests the approximation of $\calL(W)$ by the distribution with density $p$, if the exchangeable pair
$(W,W')$ satisfies \eqref{genreg} and \eqref{secmom}. Note that this idea yields a certain extension of the methodology proposed in \cite{ChSh}, where only Stein characterizations from the density approach are put to use.    

\section{The general approach}\label{abstract}
Motivated by Section \ref{motivation} in this section we develop a general version of Stein's method for a random variable $Z$ with an absolutely continuous distribution with respect to the Lebesgue measure. This version is useful for those distributions, which allow for a tractable first order linear Stein operator. This class covers many of the standard absolutely continuous distributions. However, it should not be left unmentioned that certain distributions, like the Laplace \cite{PiRen12}, the Variance-Gamma \cite{Gau14} and the PRR distribution \cite{PRR13} fall outside the scope of this approach, as they only possess a second order linear Stein operator with tractable coefficients.

For an interval $J\subseteq\R$, we will call a function defined on $\R$ \textit{locally absolutely continuous on $J$}, if its restriction to each compact sub-interval of $J$ is absolutely continuous. Also, we will use the words increasing, decreasing and so on in the weak sense, unless explicitly otherwise stated. 
Througout we suppose that $Z$ has a Lebesgue density $p$ on $\R$ satisfying the following condition: 
\begin{cond}\label{gencond1}
For some extended real numbers $-\infty\leq a<b\leq\infty$
the density $p$ is positive and locally absolutely continuous on the interval $(a,b)$.
\end{cond}
By $\abquer$ we will henceforth denote the closure of the real interval $(a,b)$ with respect to the usual topology on $\R$.
Furthermore, we assume that we are given a function $\gamma$ on $\abquer$ which might be motivated by a given exchangeable pair and which has the following properties:

\begin{cond}\label{gencond2}
The function $\ga:\abquer\rightarrow\R$ is such that 
\begin{enumerate}[(i)]
\item $\ga$ is Borel-measurable and not identically equal to zero,
\item $\ga$ is decreasing on $\abquer$,
\item $E\abs{\ga(Z)}=\int_a^b |\ga(t)|p(t)dt<\infty$ and in fact $E[\ga(Z)]=\int_a^b\ga(t)p(t)dt=0$.
\end{enumerate} 
\end{cond}

Henceforth, we will always assume that Conditions \ref{gencond1} and \ref{gencond2} are satisfied. Note that by Condition \ref{gencond2} there exists a point $x_0\in(a,b)$ such that 
\begin{equation}\label{gapn}
\ga(x)\geq0\quad\text{if}\quad a<x<x_0\quad\text{and}\quad\ga(x)\leq0\quad\text{if}\quad x_0<x<b\,,
\end{equation}
though it might not be unique.
For definiteness, we choose
\begin{equation}\label{defx0}
 x_0:=\sup\{x\in(a,b)\,:\,\ga(x)>0\}\,.
\end{equation}

By item (iii) of Condition \ref{gencond2} we can define the function $I:\abquer\rightarrow\R$ by
\begin{equation}\label{defI}
 I(x):=\int_a^x \gamma(t)p(t)dt=-\int_x^b\gamma(t)p(t)dt
\end{equation}
and by the positivity of $p$ on $(a,b)$ we can define the function $\eta$ on $(a,b)$ by 
\begin{equation}\label{etadef}
 \eta(x):=\frac{I(x)}{p(x)}=\frac{1}{p(x)}\int_a^x \gamma(t)p(t)dt=-\frac{1}{p(x)}\int_x^b \gamma(t)p(t)dt\,.
\end{equation}
The following proposition lists some properties of the function $I$.

\begin{prop}\label{genprop1}
Under Conditions \ref{gencond1} and \ref{gencond2} the function $I$ has the following properties:
\begin{enumerate}[{\normalfont (a)}]
 \item $I$ is locally absolutely continuous on $\abquer$.
 \item $I$ is nonnegative and $I(a+)=I(b-)=0$.
 \item $I$ is increasing on $\overline{(a,x_0]}$ and decreasing on $\overline{[x_0,b)}$ and, hence, attains its global maximum at $x_0$. 
\end{enumerate}
\end{prop}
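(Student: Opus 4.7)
The plan is to read off all three assertions directly from the integral formula \eqref{defI} for $I$, using only Condition \ref{gencond2} and the sign description of $\gamma$ recorded in \eqref{gapn}; essentially no delicate analysis beyond a bookkeeping of signs is required.

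For part (a), I will invoke the standard fact that if $f\in L^1_{\mathrm{loc}}(J)$ for an interval $J$, then $x\mapsto\int_c^x f(t)\,dt$ is locally absolutely continuous on $J$. By Condition \ref{gencond2}(iii), the function $\gamma p$ lies in $L^1((a,b))$, hence in $L^1_{\mathrm{loc}}(\abquer)$, so $I$ is locally absolutely continuous on $\abquer$. In particular, $I$ is continuous up to any finite endpoint of $(a,b)$.

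For part (b), I will split on whether $x\leq x_0$ or $x\geq x_0$ in order to exploit whichever of the two representations in \eqref{defI} is convenient. If $a<x\leq x_0$, the forward representation $I(x)=\int_a^x \gamma(t)p(t)\,dt$ is the integral of a nonnegative function by \eqref{gapn} and positivity of $p$, so $I(x)\geq 0$. If $x_0\leq x<b$, I use the backward representation $I(x)=-\int_x^b\gamma(t)p(t)\,dt$, where $\gamma p\leq 0$ on $(x_0,b)$ by \eqref{gapn}, yielding $I(x)\geq 0$ again. The vanishing boundary values follow from $I(a)=0$ by definition together with local absolute continuity at $a$ when $a>-\infty$ (or dominated convergence with dominator $|\gamma|p$ when $a=-\infty$), and $I(b-)=\int_a^b\gamma p=0$ is precisely the zero-mean assumption in Condition \ref{gencond2}(iii).

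Finally, part (c) follows from the same sign argument applied to differences: for $a<x<y\leq x_0$ one has $I(y)-I(x)=\int_x^y \gamma p\geq 0$, and for $x_0\leq x<y<b$ the analogous integral is $\leq 0$. Combined with part (b), this forces $I$ to attain its global maximum at $x_0$. I do not foresee any genuine obstacle; the only minor technical point is handling the case of infinite endpoints when verifying $I(a+)=I(b-)=0$, which is routinely settled by dominated convergence using the integrability $|\gamma|p\in L^1((a,b))$ from Condition \ref{gencond2}(iii).
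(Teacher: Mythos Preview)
Your proof is correct and follows essentially the same route as the paper: part (a) via the fundamental theorem of calculus for Lebesgue integration, the boundary values $I(a+)=I(b-)=0$ from Condition \ref{gencond2}(iii), and the nonnegativity and monotonicity from the sign structure \eqref{gapn} of $\gamma$. The only cosmetic difference is that the paper deduces nonnegativity from the monotonicity in (c) together with the vanishing boundary values, whereas you prove it directly from the two integral representations in \eqref{defI}; both arguments are equally immediate.
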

\begin{proof}
 Of course, (a) follows from the fundamental theorem of calculus for Lebesgue integration and the second part of (b) is immediate from item (iii) of Condition \ref{gencond2}. Finally, (c) and the first part of (b) follow from the second part of (b) and \eqref{gapn}.\\
\end{proof}

If $a>-\infty$ and/or $b<\infty$, then it is of interest to know under what circumstances it is possible to extend $\eta$ to a continuous function on $\abquer$ because we would like to have $\eta(W)$ make sense, even if $W$ assumes one of the boundary values $a$ and $b$ with positive probability. 
We will see that in most cases we indeed have $\eta(a+)=0$ or $\eta(b-)=0$ if $a>-\infty$ or if $b<\infty$, respectively. The following Mills ratio condition is satisfied by most absolutely continuous distributions and will in fact turn out to be equivalent to the asserted boundary behaviour of $\eta$. From now on, we will denote by $F$
the distribution function corresponding to the density $p$.

\begin{cond}\label{gencond3}
The density $p$ of $Z$ satisfies all the properties from Condition \ref{gencond1} and also the following:
\begin{enumerate}[(i)]
\item If $a>-\infty$, then $\lim_{x\downarrow a}\frac{F(x)}{p(x)}=0$.
\item If $b<\infty$, then $\lim_{x\uparrow b}\frac{1-F(x)}{p(x)}=0$.
\end{enumerate}
\end{cond}

\begin{prop}\label{etaprop}
Assume that Conditions \ref{gencond1} and \ref{gencond2} hold for $p$ and $\ga$, respectively. Then, the function $\eta$ vanishes at the finite end points of the support $\abquer$ of $\calL(Z)$, i.e. $\eta(a+)=0$ whenever $a>-\infty$ and $\eta(b-)=0$ whenever $b<\infty$,  if and only if Condition \ref{gencond3} is satisfied.
Thus, in this case we can extend $\eta$ to a continuous function on $\abquer$ vanishing at the finite end points of this interval. 
\end{prop}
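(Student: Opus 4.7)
The plan is to treat the left endpoint $a>-\infty$ directly and then invoke symmetry for $b<\infty$. The decisive structural input is that Condition \ref{gencond2} forces $\gamma$ to be real-valued on the closed interval $\overline{(a,b)}$, so that $\gamma(a)$ is a finite real number whenever $a>-\infty$; this is the feature that prevents $\gamma$ from blowing up at a finite endpoint and makes the equivalence with the Mills-ratio condition clean.

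First I would sharpen the sign behaviour of $\gamma$ near $a$. From the definition \eqref{defx0} of $x_0$ and monotonicity, one checks that $\gamma(x)>0$ strictly for every $x\in(a,x_0)$: the supremum property yields some $y\in(x,x_0)$ with $\gamma(y)>0$, and monotonicity then gives $\gamma(x)\geq\gamma(y)>0$. Combined with $\gamma(t)\leq\gamma(a)<\infty$ for all $t\in(a,x_0)$, this lets one fix an auxiliary point $y_0\in(a,x_0)$ such that $0<\gamma(y_0)\leq\gamma(a)<\infty$, and note also that $x_0<b$ since otherwise $\gamma\geq 0$ everywhere would force $\gamma\equiv 0$ by item (iii) of Condition \ref{gencond2}.

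For the implication ''Condition \ref{gencond3}$\,\Rightarrow\,\eta(a+)=0$'' I restrict to $x\in(a,x_0)$, where $\eta(x)=I(x)/p(x)\geq 0$ by Proposition \ref{genprop1}(b)--(c). Monotonicity of $\gamma$ gives the sandwich
\[
0\;\leq\;I(x)\;=\;\int_a^x\gamma(t)p(t)\,dt\;\leq\;\gamma(a)\,F(x),
\]
so that $0\leq\eta(x)\leq\gamma(a)\,F(x)/p(x)$, and Condition \ref{gencond3}(i) together with $\gamma(a)<\infty$ forces $\eta(a+)=0$. For the converse, assume $\eta(a+)=0$ and restrict to $x\in(a,y_0)$. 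The same monotonicity produces the reverse estimate $I(x)\geq\gamma(y_0)\,F(x)$, hence $F(x)/p(x)\leq\eta(x)/\gamma(y_0)$, and letting $x\downarrow a$ yields Condition \ref{gencond3}(i). The case $b<\infty$ is entirely symmetric: using the alternative expression $I(x)=-\int_x^b\gamma(t)p(t)\,dt$ from \eqref{etadef} and an auxiliary point $y_1\in(x_0,b)$ with $\gamma(y_1)<0$, one bounds $\lvert I(x)\rvert$ above by $\lvert\gamma(b)\rvert(1-F(x))$ and below by $\lvert\gamma(y_1)\rvert(1-F(x))$ for $x\in(y_1,b)$.

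I do not expect any serious obstacle: the whole argument is a pair of sandwich bounds, and the only subtlety is to confirm that $\gamma$ is both bounded above and bounded away from zero on the relevant one-sided neighbourhood of the finite endpoint. This is precisely what the combination of $\gamma$ being $\R$-valued on $\overline{(a,b)}$, decreasing, and $x_0$ lying strictly inside $(a,b)$ provides. The concluding assertion that $\eta$ extends to a continuous function on $\overline{(a,b)}$ then follows immediately, since $\eta=I/p$ is already continuous on $(a,b)$ by Proposition \ref{genprop1}(a) and the positivity and local absolute continuity of $p$ in Condition \ref{gencond1}.
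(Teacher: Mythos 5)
Your proof is correct and follows essentially the same route as the paper: the two-sided estimate $\gamma(y)F(x)\leq I(x)\leq\gamma(a)F(x)$ near a finite endpoint (using monotonicity of $\gamma$, finiteness of $\gamma(a)$, and strict positivity of $\gamma$ just inside $x_0$), which yields both directions of the equivalence by dividing by $p(x)$. You merely make explicit two points the paper leaves implicit — why $\gamma>0$ strictly on $(a,x_0)$ and the symmetric argument at $b$ — so no further comment is needed.
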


Not every density $p$ satisfies Condition \ref{gencond3} as is clarified by the following example.

\begin{example}\label{ce}
Let $\delta_n\in(0,1)$, $n\geq1$, be such that $\sum_{n\geq1}\delta_n=1$ and define $x_n:=1-\sum_{j=1}^{n-1}\delta_j=\sum_{j=n}^\infty \delta_j$ and $I_n:=[x_{n+1},x_n]$, $n\geq1$. Furthermore let $q$ be the unique continuous function, which is linear on each interval $I_n$ and 
such that $q(x_{2n})=\delta_{2n}^2$ and $q(x_{2n+1})=\delta_{2n}$ for $n\geq1$ and $q(1):=\delta_1$. Define $p$ to be the probability density which is a constant multiple of $q$. Then, $p$ satisfies Condition \ref{gencond1} with $a=0$ and $b=1$ but Condition \ref{gencond3} does not hold: 
We have $\lim_{n\to\infty}x_{2n}=0$ but 
\begin{align*}
 \frac{F(x_{2n})}{p(x_{2n})}&\geq\frac{F(x_{2n})-F(x_{2n+1})}{p(x_{2n})}=\frac{1}{p(x_{2n})}\int_{x_{2n+1}}^{x_{2n}}p(t)dt\\
 &=\frac{\delta_{2n}\bigl(p(x_{2n})+p(x_{2n+1})\bigr)}{2p(x_{2n})}\geq\frac{\delta_{2n}p(x_{2n+1})}{2p(x_{2n})}
=\frac{1}{2}\,.
\end{align*}
Note that $p$ satisfies $\lim_{x\to0}p(x)=0$, so that this does not only happen because $p(0+)$ might not exist.
\end{example}

The counterexample given in Example \ref{ce} is quite artificial. Indeed, the following proposition lists mild assumptions on the density $p$ which guarantee that Condition \ref{gencond3} is satisfied. In practice, at least one of these assumptions is usually met. In particular, note that by part (f) of Proposition \ref{propbs} the Mills ratio limits from Condition \ref{gencond3} at finite boundary points $a$ or $b$ are always zero, whenever they exist. 

\begin{prop}\label{propbs}
Assume $a>-\infty$. In either of the following cases  $\lim_{x\downarrow a}\frac{F(x)}{p(x)}=0$.
\begin{enumerate}[{\normalfont (a)}]
 \item The density $p$ is bounded away from zero in a suitable neighbourhood of $a$.
 \item We have $p(a+)=0$ and there is a $\delta>0$ such that $p$ is increasing on $(a,a+\delta)$.
 \item We have $p(a+)=0$ and there is a $\delta>0$ such that $p$ is convex on $(a,a+\delta)$.
 \item We have $p(a+)=0$ and there is a $\delta>0$ such that $p$ is concave on $(a,a+\delta)$.
 \item The density $p$ is analytic at $a$.
 \item The limit  $\lim_{x\downarrow a}\frac{F(x)}{p(x)}$ exists.
 \end{enumerate}
Of course, similar conditions guarantee that $\lim_{x\uparrow b}\frac{1-F(x)}{p(x)}=0$ if $b<\infty$. 
\end{prop}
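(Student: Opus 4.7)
My plan is to handle the six cases individually, in each one bounding $F(x)/p(x)$ from above on a right neighbourhood of $a$. Throughout I use that $F(a+)=0$ (since $p$ is a Lebesgue density on $(a,b)$) and that $F'=p$ almost everywhere. For (a), if $p(x)\geq c>0$ on $(a,a+\delta)$ then $F(x)/p(x) \leq F(x)/c \to 0$. For (b), monotonicity gives $p(t)\leq p(x)$ for $a<t\leq x$, hence $F(x)\leq(x-a)p(x)$ and $F(x)/p(x)\leq x-a \to 0$. For (c), convexity of $p$ combined with $p(a+)=0$ forces the graph of $p$ on $(a,x)$ to lie below the chord from $(a,0)$ to $(x,p(x))$, yielding $p(t)\leq\tfrac{t-a}{x-a}p(x)$; integrating gives $F(x)\leq\tfrac{1}{2}(x-a)p(x)$ and hence $F(x)/p(x)\leq(x-a)/2\to 0$.

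Case (d) I plan to reduce to (b). Concavity of $p$ on $(a,a+\delta)$ implies that the right derivative $p'_+$ exists and is decreasing there; from the concavity inequality $p(y)\leq p(x)+p'_+(x)(y-x)$ (valid for $a<x<y<a+\delta$) together with $p(a+)=0$ one obtains, after letting $x\downarrow a$, that $p(y)/(y-a)\leq p'_+(a+):=\lim_{x\downarrow a}p'_+(x)$ for every $y\in(a,a+\delta)$. Positivity of $p$ then forces $p'_+(a+)>0$, so by decreasingness $p'_+>0$ on some $(a,a+\delta')$, i.e.\ $p$ is strictly increasing there, and (b) applies. For (e), analyticity of $p$ at $a$ together with nonnegativity on a right neighbourhood yields a convergent expansion $p(x)=\sum_{k\geq k_0}c_k(x-a)^k$ with $c_{k_0}>0$; the subcase $k_0=0$ is subsumed by (a), while for $k_0\geq 1$ one has $p(x)\sim c_{k_0}(x-a)^{k_0}$ and $F(x)\sim c_{k_0}(x-a)^{k_0+1}/(k_0+1)$, so $F(x)/p(x)\sim(x-a)/(k_0+1)\to 0$.

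The main obstacle is case (f), where only the bare existence of $L:=\lim_{x\downarrow a}F(x)/p(x)\in[0,+\infty]$ is assumed. I would argue by contradiction: if $L>0$, then there exist $\kappa,\delta'>0$ with $p(x)\leq \kappa F(x)$ on $(a,a+\delta')$. Setting $G(x):=\int_a^x F(t)\,dt$, the function $G$ is absolutely continuous on $[a,a+\delta')$ with $G(a+)=0$, and
\[G'(x)=F(x)=\int_a^x p(t)\,dt\leq\kappa\int_a^x F(t)\,dt=\kappa G(x).\]
Gronwall's inequality then forces $G\equiv 0$ on $(a,a+\delta')$, hence $F\equiv 0$ there and thus $p\equiv 0$ almost everywhere, contradicting the pointwise positivity of $p$ on $(a,b)$. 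Therefore $L=0$. The symmetric statements at the right endpoint $b$ follow by the obvious reflection, or by rerunning each argument on the right tail.
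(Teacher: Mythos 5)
Your proof is correct and follows the same case-by-case strategy as the paper: (a), (b) and (e) are handled identically in substance, (d) is reduced to (b) just as in the paper (you via the tangent-line inequality and the sign of $p_+'(a+)$, the paper via the integral representation of a concave function with decreasing integrand), and for (c) you integrate the chord bound directly instead of first deducing monotonicity, which even improves the estimate to $F(x)/p(x)\leq (x-a)/2$. For (f), your Gronwall argument ($p\leq \kappa F$ and $F(a+)=0$ force $F\equiv 0$) is the same differential inequality that the paper treats in logarithmic form (boundedness of $p/F$ near $a$ would contradict $\log F(a+)=-\infty$), so both are sound and essentially equivalent.
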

The proof is given in Section \ref{proofs}.
Now, for a given Borel-measurable function $h$ on $\abquer$ such that $E\abs{h(Z)}<\infty$ consider the Stein equation \eqref{gensteineq}. It is easy to check that the function\\ $g_h:(a,b)\rightarrow\R$ given by 
\begin{align}\label{gensteinsol}
g_h(x)&:=\frac{1}{p(x)\eta(x)}\int_a^x\bigl(h(t)-E[h(Z)]\bigr)p(t)dt\nonumber\\
&=-\frac{1}{p(x)\eta(x)}\int_x^b\bigl(h(t)-E[h(Z)]\bigr)p(t)dt
\end{align}
is a solution to \eqref{gensteineq} in the sense that $g_h$ is locally absolutely continuous on $(a,b)$ and \eqref{gensteineq} holds for each point $x\in(a,b)$ where $g_h$ is in fact differentiable. At all other points $x\in(a,b)$, in contrast to the usual convention, we define $g_h'(x)$ by \eqref{gensteineq} such that 
this identity holds true on $(a,b)$. The formula for $g_h$ might as well be found by the method of variation of the constant using the fact that $\log(p \eta)$ is a primitive function of $\gamma/\eta$, which in turn follows from \eqref{etaode}. In what follows we will always call the solution $g_h$ given by \eqref{gensteinsol} the 
\textit{standard solution} to equation \eqref{gensteineq}. It turns out that this particular solution has the best properties. For instance,  if $g_h$ is bounded then it is the only bounded solution since the solutions of the corresponding homogeneous equation are given by constant multiples of $I^{-1}=\eta^{-1}p^{-1}$, which is unbounded 
by Proposition \ref{genprop1} (b). Since we do not exclude cases where the given random variable $W$ assumes the finite value $a$ and/or $b$ with positive probability, we have to make sure that $g_h$ can be extended to a continuous function on $\abquer$. Assume that $a>-\infty$. Here, and in what follows we will often write $\htilde$ for 
$h-E[h(Z)]$. By de l'H\^{o}pital's rule we have 
\begin{equation}\label{conta1}
\lim_{x\downarrow a}g_h(x)= \lim_{x\downarrow a}\frac{\int_a^x\htilde(t)p(t)dt}{I(x)}
=\lim_{x\downarrow a}\frac{\htilde(x)p(x)}{\ga(x)p(x)}=\lim_{x\downarrow a}\frac{\htilde(x)}{\ga(x)}
=\frac{h(a+)-E[h(Z)]}{\ga(a+)}\,,
\end{equation}
if $h$ has a right limit at $a$. Note that $\gamma$ has a right limit at $a$ since it is decreasing. Similarly, 
\begin{equation*}
 \lim_{x\uparrow b}g_h(x)= \frac{h(b-)-E[h(Z)]}{\ga(b-)}\,,
\end{equation*}
if $h$ has a left limit at $b<\infty$.

\begin{prop}\label{genprop2}
Assume Conditions \ref{gencond1} and \ref{gencond2} and let $h:\abquer\rightarrow\R$ be a Borel-measurable function such that $E\abs{h(Z)}<\infty$. Then, the standard solution $g_h$ to \eqref{gensteineq} given by \eqref{gensteinsol} has the following properties:
\begin{enumerate}[{\normalfont(a)}]
 \item If $a>-\infty$ and $h$ has a right limit at $a$, then $g_h$ can be extended continuously to $a$ by letting $\displaystyle g_h(a):=\frac{h(a+)-E[h(Z)]}{\ga(a+)}$.
 \item If $b<\infty$ and $h$ has a left limit at $b$, then $g_h$ can be extended continuously to $b$ by letting $\displaystyle g_h(b):=\frac{h(b-)-E[h(Z)]}{\ga(b-)}$.
\end{enumerate}
\end{prop}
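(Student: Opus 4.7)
The two statements are mirror images, so I focus on (a) and note that (b) follows by an entirely analogous argument applied to the second representation in \eqref{gensteinsol} as $x\uparrow b$. The strategy is the one already suggested in the computation \eqref{conta1}: write $g_h(x)=N(x)/I(x)$ with $N(x):=\int_a^x\htilde(t)p(t)\,dt$, verify that this is a $0/0$ indeterminate form as $x\downarrow a$, and conclude by de l'H\^{o}pital's rule applied to locally absolutely continuous functions.

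\textbf{Verifying the hypotheses.} Proposition \ref{genprop1}(b) yields $I(x)\to 0$ as $x\downarrow a$, and $N(x)\to 0$ follows from absolute continuity of the Lebesgue integral, since $\htilde p$ is integrable by assumption on $h$. To apply l'H\^{o}pital I also need $I$ to be nonzero on a punctured right-neighbourhood of $a$: because $\ga$ is decreasing with $E[\ga(Z)]=0$ and $\ga\not\equiv 0$, there must exist some $x\in(a,b)$ with $\ga(x)>0$, whence $\ga(a+)\geq\ga(x)>0$ by monotonicity, and consequently $\ga>0$ on an interval $(a,a+\eps)$; Proposition \ref{genprop1}(c) then forces $I>0$ there. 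Both $N$ and $I$ are absolutely continuous on $[a,x]$ for any $a<x<b$ by the fundamental theorem of calculus, with a.e.\ derivatives $N'=\htilde p$ and $I'=\ga p$, so the quotient of the derivatives simplifies to $\htilde(x)/\ga(x)$. Since $h$ has a right limit at $a$ by hypothesis, and $\ga(a+)\in(0,+\infty]$ exists by monotonicity, this quotient tends to $(h(a+)-E[h(Z)])/\ga(a+)$, with the natural convention that the value is $0$ if $\ga(a+)=+\infty$.

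\textbf{Conclusion and main obstacle.} Invoking the version of de l'H\^{o}pital's rule for locally absolutely continuous functions from Appendix A of \cite{Doe12c} now yields $\lim_{x\downarrow a}g_h(x)=(h(a+)-E[h(Z)])/\ga(a+)$, so that $g_h$ extends continuously to $a$ with the claimed value, and part (b) follows symmetrically. The only conceptual obstacle is that the classical formulation of l'H\^{o}pital requires pointwise differentiability of numerator and denominator, which we do not have at every point since $p$ is merely locally absolutely continuous; the resolution is to use the a.e.\ version, whose hypotheses — local absolute continuity, strict monotonicity of the denominator in a one-sided neighbourhood, vanishing of both functions at the endpoint, and existence of the limit of the derivative quotient — have all been checked above.
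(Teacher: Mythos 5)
Your argument is correct and is essentially the paper's own proof: the paper establishes Proposition \ref{genprop2} precisely via the de l'H\^{o}pital computation \eqref{conta1} (using the version for locally absolutely continuous functions from Appendix A of \cite{Doe12c}), with the case $x\uparrow b$ handled symmetrically. Your additional verification of the hypotheses — the $0/0$ form, positivity of $\ga$ and hence of $I$ near $a$, and the convention $1/\infty=0$ when $\ga(a+)=+\infty$ — only makes explicit what the paper leaves implicit.
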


The success of Stein's method within applications considerably depends on good bounds on the solutions $g_h$ and their lower order derivatives, generally uniformly over some given class of test functions $h$. 
The next step will be to prove such bounds. It has to be mentioned that we cannot expect to derive concrete good bounds in full generality, but that sometimes further conditions have to be imposed either on the density $p$ or on the coefficient $\ga$.
Nevertheless, we will derive bounds involving functional expressions which can be simplified, computed or further bounded a posteriori for concrete distributions. So our abstract viewpoint will pay off. Moreover, some of our general bounds will already be explicit.\\
In what follows, we denote by $g_h$ the standard solution to Stein's equation \eqref{gensteineq} on $\abquer$, implicitly assuming that $h$ satisfies the assumptions of Proposition \ref{genprop2}. Furthermore, for a function $f$ we denote by $\fnorm{f}$ its essential supremum norm on $\abquer$. Note that this implies 
for $f$ a Lipschitz-continuous function on $\abquer$ that $\fnorm{f'}$ is just its minimum Lipschitz constant. First we give bounds for bounded and measurable test functions $h$.

\begin{prop}\label{genprop3}
Assume Conditions \ref{gencond1} and \ref{gencond2} and let $m$ be the median of $\calL(Z)$. Then, if $h:\abquer\rightarrow\R$ is Borel-measurable and bounded we have
\begin{equation}\label{genboundbounded}
 \fnorm{g_h}\leq\frac{\fnorm{h-E[h(Z)]}}{2I(m)}=\frac{\fnorm{h-E[h(Z)]}}{2\int_a^{m}\ga(t)p(t)dt}.
\end{equation}
\end{prop}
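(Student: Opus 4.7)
The plan is to bound $|g_h(x)|$ pointwise using the two equivalent integral representations for the standard solution, then show that the worst case occurs at $x=m$.

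First I would use the fact that since $E[\tilde h(Z)]=0$ (where $\tilde h=h-E[h(Z)]$), the two numerators in
\[
g_h(x)=\frac{\int_a^x \tilde h(t)p(t)dt}{I(x)}=-\frac{\int_x^b \tilde h(t)p(t)dt}{I(x)}
\]
differ by a sign. A crude $L^\infty$ bound on each numerator yields
\[
\Bigl|\int_a^x\tilde h(t)p(t)dt\Bigr|\leq\fnorm{\tilde h}\min\bigl(F(x),1-F(x)\bigr),
\]
so $|g_h(x)|\leq \fnorm{\tilde h}\min(F(x),1-F(x))/I(x)$. The task therefore reduces to establishing
\[
\frac{\min(F(x),1-F(x))}{I(x)}\leq\frac{1}{2I(m)}\quad\text{for all } x\in(a,b).
\]

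The main technical step, and what I expect to be the hinge of the whole argument, is to exploit the monotonicity of $\gamma$ in a ratio-of-averages form. For $x\leq m$ I would analyze $\varphi(x):=I(x)/F(x)$, which is a $p$-weighted mean of $\gamma$ on $(a,x)$. A direct computation gives
\[
\varphi'(x)=\frac{p(x)}{F(x)^2}\Bigl(\gamma(x)F(x)-\int_a^x\gamma(t)p(t)dt\Bigr)\leq 0,
\]
because $\gamma$ is decreasing forces $\gamma(t)\geq\gamma(x)$ for $t\leq x$. Hence $\varphi$ is decreasing on $(a,b)$, so $I(x)/F(x)\geq I(m)/F(m)=2I(m)$ for $x\leq m$, giving the required inequality on $(a,m]$. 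A symmetric argument with $\psi(x):=I(x)/(1-F(x))$ shows that $\psi$ is increasing (again via rewriting $I(x)=-\int_x^b\gamma(t)p(t)dt$ and using $\gamma(x)\geq\gamma(t)$ for $t\geq x$), so $\psi(x)\geq\psi(m)=2I(m)$ for $x\geq m$, handling the interval $[m,b)$.

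Putting the pointwise bound together with the two monotonicity statements yields $\fnorm{g_h}\leq \fnorm{\tilde h}/(2I(m))$, and the alternative form of the denominator follows simply from $I(m)=\int_a^m\gamma(t)p(t)dt$. The only subtle point is handling the behavior at the endpoints of $(a,b)$, which is already guaranteed by Propositions \ref{genprop1} and \ref{genprop2}: $I$ is continuous on $\abquer$ with $I(a+)=I(b-)=0$ while $g_h$ admits the boundary extensions computed via de l'H\^{o}pital in \eqref{conta1}, so the pointwise inequality automatically passes to the closure (and at the endpoints one either uses the $\varphi,\psi$ bounds with a limiting argument, or observes that the boundary formulas $g_h(a+)=\tilde h(a+)/\gamma(a+)$ and $g_h(b-)=\tilde h(b-)/\gamma(b-)$ obey the same bound as a consequence of the monotonicity).
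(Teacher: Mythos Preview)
Your proposal is correct and follows essentially the same route as the paper. The paper works with $M(x)=F(x)/I(x)$ and shows it is increasing via the inequality $I(x)\geq\gamma(x)F(x)$ (from monotonicity of $\gamma$), then handles $x>m$ symmetrically using the representation $g_h(x)=-I(x)^{-1}\int_x^b\tilde h(t)p(t)\,dt$; your functions $\varphi=1/M$ and $\psi$ are just the reciprocals, and your derivative computations are exactly the same inequality in disguise.
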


The proof is deferred to Section \ref{proofs}.
The following corollary specializes this result to the case that $\ga(x)=-c(x-E[Z])$ and that $\calL(Z)$ is symmetric with respect to its mean $E[Z]$, i.e. $Z-E[Z]\stackrel{\D}{=}E[Z]-Z$. Then, it is also clear that $m=E[Z]$.

\begin{cor}\label{gencor1}
In addition to Conditions \ref{gencond1} and \ref{gencond2} assume that the distribution $\calL(Z)$ is symmetric with respect to $m=E[Z]$ and that 
$\ga(x)=-c(x-E[Z])$ for some positive constant $c$. Then, for each bounded and  Borel-measurable test function $h:\abquer\rightarrow\R$ we have
\begin{equation}\label{boundboundedsym}
\fnorm{g_h}\leq\frac{\fnorm{h-E[h(Z)]}}{cE[\abs{Z-m}]}.
\end{equation} 
\end{cor}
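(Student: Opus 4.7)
The plan is to deduce the corollary as a direct specialization of Proposition \ref{genprop3}, so the entire argument amounts to rewriting the quantity $2I(m)$ appearing in \eqref{genboundbounded} in the form $cE[\abs{Z-m}]$ under the additional symmetry and linearity hypotheses.

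First I would verify that, under the symmetry assumption $Z - E[Z] \stackrel{\D}{=} E[Z] - Z$, the mean $E[Z]$ is indeed a median of $\calL(Z)$, so that we may take $m = E[Z]$ in the conclusion of Proposition \ref{genprop3}; this is immediate from the equality $P(Z \leq E[Z]) = P(Z \geq E[Z])$. Then, plugging $\ga(t) = -c(t - E[Z]) = -c(t - m)$ into the definition of $I$, I compute
\[
2I(m) = 2\int_a^m \ga(t)p(t)\,dt = 2c\int_a^m (m-t)p(t)\,dt.
\]

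Next, I would split the expectation $E[\abs{Z-m}]$ at $m$:
\[
E[\abs{Z-m}] = \int_a^m (m-t)p(t)\,dt + \int_m^b (t-m)p(t)\,dt,
\]
and use the symmetry of $\calL(Z)$ about $m$ to identify the two integrals on the right, yielding $E[\abs{Z-m}] = 2\int_a^m (m-t)p(t)\,dt$. Comparing with the previous display, we obtain $2I(m) = cE[\abs{Z-m}]$, and substituting into \eqref{genboundbounded} gives the claimed bound \eqref{boundboundedsym}.

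There is no serious obstacle here; the only minor point to be careful about is confirming that $m = E[Z]$ is a valid choice of median (which the hypothesis of symmetry around $E[Z]$ supplies), since Proposition \ref{genprop3} was stated in terms of an arbitrary median of $\calL(Z)$.
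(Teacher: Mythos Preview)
Your proposal is correct and follows essentially the same approach as the paper: the paper's proof simply records that $I(m)=\tfrac{c}{2}E\abs{Z-m}$ and invokes Proposition \ref{genprop3}, which is precisely what you do with the details spelled out.
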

   
\begin{proof}
In this case we clearly have $I(m)=\frac{c}{2} E[\abs{Z-m}]$ which implies the result by Proposition \ref{genprop3}.\\
\end{proof}
In the case that $Z\sim N(0,1)$ and $c=1$ this result specializes to the well known bound $\fnorm{g_h}\leq\sqrt{\frac{\pi}{2}}\fnorm{h-E[h(Z)]}$ (see \cite{CGS} or \cite{CheShaSing}, e.g.).

\begin{remark}\label{genrem2}
\begin{enumerate}[(a)]
 \item In the statement of Proposition \ref{genprop3} it might suprise that there is no bound mentioned for $\fnorm{g_h'}$. This is because, in general, a bound of the form $\fnorm{g_h'}\leq C \fnorm{h-E[h(Z)]}$ with a finite constant $C$ does not exist in this setup. For instance, for $z>0$ and $Z$ having the exponential distribution with mean one, consider the Stein equation
\begin{equation}\label{expse}
 xg'(x)+(1-x)g(x)=1_{[0,z]}(x)-P(Z\leq z)\,.
\end{equation}
Identity (3.3) from \cite{CFR11} shows that for $x>z$ the solution $g_z$ to \eqref{expse} satisfies
\begin{equation*}
 g_z'(x)=\frac{e^{-z}-1}{x^2}\,.
\end{equation*}
Hence, we have that 
\begin{equation*}
 \sup_{x>z}\abs{g_z'(x)}=\frac{1-e^{-z}}{z^2}\stackrel{z\downarrow0}{\longrightarrow}\infty\,,
\end{equation*}
proving that such a constant $C$ in general cannot exist. Note also that this is contrary to the density approach, where one usually
has such a bound (see \cite{ChSh} or \cite{CGS}).  
\item The Kolmogorov distance between a given random variable $W$ and $Z$ is induced by the class of test functions $h_z:=1_{(-\infty,z]}$, where $z\in(a,b)$. In this situation it is easy to verify that the standard solution $g_z:=g_{h_z}$ to \eqref{gensteineq} is given by 
\begin{equation*}
 g_z(x)=\begin{cases}
         \frac{F(x)(1-F(z))}{I(x)}\,,& a<x\leq z\\
         \frac{F(z)(1-F(x))}{I(x)}\,,& z<x<b
        \end{cases}\quad\text{and}\quad
        \fnorm{g_z}=\frac{F(z)(1-F(z))}{I(z)}=:S(z)\,.
\end{equation*}
By using de l'H\^{o}pital's rule it is not hard to check that always $\sup_{z\in(a,b)}S(z)<\infty$. 
Furthermore, $g_z$ is Lipschitz-continuous and on $(a,b)\setminus\{z\}$ it is infinitely often continuously differentiable with
\begin{equation*}
 g_z'(x)=\begin{cases}
          \frac{(1-F(z))p(x)H(x)}{I(x)^2}\,,& a<x< z\\
         \frac{-F(z)p(x)G(x)}{I(x)^2}\,,& z<x<b\,,
         \end{cases}
\end{equation*}
where the functions $H$ and $G$ are defined in Proposition \ref{genprop4}.
From the negative example of (a) we already know that, in general, there is no finite constant $C$ such that 
\begin{equation*}
 \fnorm{g_z'}\leq C\,,\quad a<z<b\,.
\end{equation*}
Nevertheless, even in such a situation, one may use the uniform bound on $S$ and a $z$-dependent bound on $\fnorm{g_z'}$ as well as particular properties of $W$ to prove accurate bounds on the Kolmogorov distance.
This was done in \cite{CFR11} for the exponential distribution. Incidentally, in the case of the Beta distribution, the function $S$ will be bounded for a different purpose in the proof of Proposition \ref{betabounds}. 
\end{enumerate}
\end{remark}

Proposition \ref{genprop3} is already sufficient to prove that the operator $L$ given by \eqref{gensteinop} characterizes 
the distribution of $Z$. The proof is given in Section \ref{proofs}.

\begin{prop}\label{steincharprop}
A random variable $X$ with values in $\abquer$ has the same distribution as $Z$ if and only if for each continuous function $f$ on $\abquer$, which is locally absoulutely continuous on $(a,b)$ and which satisfies \\
$E\abs{\eta(Z)f'(Z)}=\int_a^b\abs{f'(x)}I(x)dx<\infty$ we have 
\[E[\eta(X)f'(X)]=-E[\gamma(X)f(X)]\,.\]
In particular, in this case both expected values exist.
\end{prop}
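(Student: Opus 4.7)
For the ``only if'' direction, assume $X\stackrel{\D}{=}Z$ and let $f$ satisfy the stated regularity conditions. The identity will come from integration by parts applied to $\int I\, f'$ on $(a,b)$. By Proposition \ref{genprop1}(a), $I$ is locally absolutely continuous on $\abquer$ with $I'=\gamma p$ a.e., so for any compact $[\alpha,\beta]\subset(a,b)$,
\begin{equation*}
\int_\alpha^\beta \eta(x) f'(x) p(x)\, dx \;=\; f(\beta) I(\beta) - f(\alpha) I(\alpha) - \int_\alpha^\beta f(x) \gamma(x) p(x)\, dx.
\end{equation*}
Letting $\alpha\downarrow a$ and $\beta\uparrow b$, the left side converges to $E[\eta(Z)f'(Z)]$ by the standing integrability hypothesis. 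At finite endpoints the boundary terms vanish because $I(a+)=I(b-)=0$ (Proposition \ref{genprop1}(b)) while $f$ remains bounded by continuity on $\abquer$. At infinite endpoints one combines $\int\abs{f'}I<\infty$ with the vanishing of $I$ at $\pm\infty$ and the identity $(fI)'=f'I+f\gamma p$ to conclude that the boundary values of $fI$ exist, must equal zero, and simultaneously that $E\abs{\gamma(Z)f(Z)}<\infty$. This gives both the existence claim and the identity $E[\eta(X)f'(X)]=-E[\gamma(X)f(X)]$.

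For the ``if'' direction, the plan is to test against the Stein solutions for indicator functions. Fix $z\in(a,b)$ and set $h_z(x):=1_{\{x\leq z\}}$, which is bounded and Borel-measurable with one-sided limits on $\abquer$. Let $g_z:=g_{h_z}$ be the standard solution given by \eqref{gensteinsol}. By Proposition \ref{genprop2} the function $g_z$ extends continuously to $\abquer$; formula \eqref{gensteinsol} already shows it is locally absolutely continuous on $(a,b)$; and Proposition \ref{genprop3} yields $\fnorm{g_z}<\infty$. With $g_z'$ defined on $(a,b)$ by the Stein equation as described after \eqref{gensteinsol}, i.e.
\begin{equation*}
\eta(x) g_z'(x) + \gamma(x) g_z(x) \;=\; h_z(x) - F(z), \qquad x\in(a,b),
\end{equation*}
one obtains
\begin{equation*}
\abs{g_z'(x)} I(x) \;=\; \abs{\eta(x) g_z'(x)} p(x) \;\leq\; \bigl(\abs{h_z(x)-F(z)} + \fnorm{g_z}\abs{\gamma(x)}\bigr) p(x),
\end{equation*}
whose right side is integrable on $(a,b)$ by Condition \ref{gencond2}(iii). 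Thus $f=g_z$ qualifies as a test function in the assumed identity, and plugging it in gives
\begin{equation*}
0 \;=\; E[\eta(X) g_z'(X)] + E[\gamma(X) g_z(X)] \;=\; E[h_z(X) - F(z)] \;=\; P(X\leq z) - F(z).
\end{equation*}
Since $z\in(a,b)$ is arbitrary and $(a,b)$ has full $F$-measure, this forces $\calL(X)=\calL(Z)$.

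The main obstacle I expect is the boundary analysis in the ``only if'' direction when $a=-\infty$ or $b=+\infty$: there the continuity of $f$ on $\abquer$ does not by itself bound $f$ at the endpoint, so one must squeeze the vanishing of $fI$ and the finiteness of $E\abs{\gamma(Z)f(Z)}$ simultaneously out of $\int\abs{f'}I<\infty$ and $I(\pm\infty)=0$. A secondary subtlety is the convention that $g_z'$ is defined via the Stein equation where $g_z$ fails to be classically differentiable, so that $E[\eta(X)g_z'(X)]$ is well defined even if $X$ charges the (Lebesgue null) exceptional set; this is handled by noting that the identity used in the final display is pointwise on all of $(a,b)$, not merely almost everywhere.
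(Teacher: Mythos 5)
Your proof is correct in both directions and close in spirit to the paper's, but the execution differs in two places worth noting. For necessity, the paper does not integrate by parts over compact subintervals and then chase boundary terms; instead it substitutes $f(x)=f(x_0)+\int_{x_0}^x f'(t)\,dt$ into $\int_a^b \gamma f p$ and applies Fubini separately on $(a,x_0)$ and $(x_0,b)$, where $\gamma$ has constant sign. This yields $E\abs{\gamma(Z)f(Z)}\leq\abs{f(x_0)}\,E\abs{\gamma(Z)}+\int_a^b\abs{f'(t)}I(t)\,dt<\infty$ in one stroke, and repeating the computation without absolute values gives the identity itself --- no boundary analysis is needed, at finite or infinite endpoints alike. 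Your compact-exhaustion argument works, but the step you yourself flag as the main obstacle (infinite endpoints) is exactly where your sketch is thinnest: the clean way to ``squeeze'' out both $E\abs{\gamma(Z)f(Z)}<\infty$ and $fI\to 0$ is precisely this FTC-plus-Fubini estimate (for the vanishing of $fI$ one additionally uses that $I$ is decreasing near $b$, so that $I(x)\int_{x_0}^x\abs{f'}\leq I(x)\int_{x_0}^y\abs{f'}+\int_y^x\abs{f'}\,I$ for $x_0<y<x$), so a complete write-up ends up reproducing the paper's computation anyway. For sufficiency, you test with indicators $h_z=1_{(-\infty,z]}$ and match distribution functions, while the paper tests with bounded continuous $h$ and matches expectations; both are legitimate, the admissibility check is identical in the two versions ($\abs{g'}I\leq(\abs{\htilde}+\fnorm{g}\abs{\gamma})p$, integrable by Condition \ref{gencond2} and Proposition \ref{genprop3}), and your choice has the mild advantage of making the conclusion $P(X\leq z)=F(z)$ explicit; one should just add the one-line remark that letting $z\downarrow a$ and $z\uparrow b$ rules out atoms of $\calL(X)$ at finite endpoints, since $X$ is only assumed to take values in $\abquer$.
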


Next, we will turn to Lipschitz continuous test functions $h$. In contrast to bounded measurable test functions, there we will also be able to prove useful bounds for $g_h'$. 
In order that $E[h(Z)]$ exists for Lipschitz continuous test functions $h$ we need to assume that $E\abs{Z}<\infty$.
The following two result, which are also proved in Section \ref{proofs}, include optimal bounds for both, $g_h$ and $g_h'$, when $h$ is Lipschitz.

\begin{prop}\label{genprop4}
Assume that $E\abs{Z}<\infty$ and Conditions \ref{gencond1} and \ref{gencond2} hold. Then, we have for any Lipschitz continuous test function 
$h:\abquer\rightarrow\R$ and any $x\in\abquer$:
\begin{enumerate}[{\normalfont (a)}]
\item $\displaystyle\abs{g_h(x)}\leq\fnorm{h'}\frac{F(x)E[Z]-\int_a^x yp(y)dy}{I(x)}=\fnorm{h'}\frac{\int_a^x(E[Z]-y)p(t)dt}{I(x)}\,$;
\item $\displaystyle\abs{g_h'(x)}\leq\fnorm{h'}\frac{\int_a^xF(s)dsG(x)+\int_x^b(1-F(s))dsH(x)}{p(x)\eta(x)^2}$.
\end{enumerate}
Here, for $x\in\abquer$, the positive functions $H(x)$ and $G(x)$ are defined by 
\[H(x):=I(x)-\ga(x)F(x)=p(x)\eta(x)-\ga(x)F(x)\text{ and } G(x):=H(x)+\ga(x)\,.\]
Moreover, these bounds are optimal among all bounds involving the factor $\fnorm{h'}$.  
\end{prop}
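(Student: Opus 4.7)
My plan is to work from the explicit standard solution formula \eqref{gensteinsol} and push every occurrence of $h$ onto its derivative $h'$ via Fubini, so that the Lipschitz assumption enters only through $\fnorm{h'}$. First I would establish two auxiliary identities by swapping orders of integration. Writing $h(x)-h(y)=\int_y^x h'(s)\,ds$ and using $E[h(Z)]=\int_a^b h(y)p(y)\,dy$ gives
\[
\htilde(x) := h(x) - E[h(Z)] = \int_a^x h'(s) F(s)\,ds - \int_x^b h'(s)(1-F(s))\,ds,
\]
and an analogous double-integral computation for $N(x):=\int_a^x \htilde(t) p(t)\,dt$ yields
\[
N(x) = -(1-F(x))\int_a^x h'(s) F(s)\,ds - F(x)\int_x^b h'(s)(1-F(s))\,ds.
\]

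For part (a), since $g_h(x)=N(x)/I(x)$, the triangle inequality gives
\[
|g_h(x)| \le \frac{\fnorm{h'}}{I(x)}\Bigl[(1-F(x))\int_a^x F(s)\,ds + F(x)\int_x^b (1-F(s))\,ds\Bigr].
\]
A short integration by parts on the two inner integrals (using $F(a+)=0$ and the boundary behaviour from $E\abs{Z}<\infty$) rearranges the bracket into $\int_a^x(E[Z]-y)p(y)\,dy$, which is algebraically identical to $F(x)E[Z]-\int_a^x y p(y)\,dy$, completing (a).

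For part (b), I would differentiate $g_h(x)=N(x)/I(x)$ via the quotient rule, using $I'(x)=\ga(x)p(x)$ and $N'(x)=\htilde(x)p(x)$, to obtain
\[
g_h'(x) = \frac{p(x)\bigl(\htilde(x) I(x) - \ga(x) N(x)\bigr)}{I(x)^2}.
\]
Plugging in the Fubini formulas above and collecting the coefficients of $\int_a^x h'(s)F(s)\,ds$ and $\int_x^b h'(s)(1-F(s))\,ds$, they collapse exactly to $I(x)+\ga(x)(1-F(x))=G(x)$ and $-(I(x)-\ga(x)F(x))=-H(x)$, respectively. Since $I(x)=p(x)\eta(x)$, the triangle inequality then yields the claimed bound in (b), provided $G(x),H(x)\ge 0$.

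The one technical (though brief) obstacle is verifying this positivity, which hinges on the monotonicity of $\ga$ from Condition \ref{gencond2}(ii). For $H(x)=I(x)-\ga(x)F(x)$, if $x\le x_0$ then $\ga(t)\ge\ga(x)\ge 0$ on $(a,x)$, so $I(x)\ge\ga(x)F(x)$; if $x>x_0$ then $\ga(x)\le 0$ and both summands are already non-negative. An analogous case split using the alternative representation $I(x)=-\int_x^b\ga(t)p(t)\,dt$ disposes of $G(x)$. Optimality of both bounds follows from exhibiting extremizers in which $h'$ aligns with the signs appearing in the representations: $h(t)=t$ makes (a) an equality, and for each fixed $x$ the tent function $h(t)=-|t-x|$ (Lipschitz with constant $1$) makes (b) an equality, since its derivative is $+1$ on $(a,x)$ and $-1$ on $(x,b)$.
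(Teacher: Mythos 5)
Your proposal is correct and follows essentially the same route as the paper: the two Fubini identities you derive are exactly the paper's Lemma \ref{liplemma}, the quotient-rule expression for $g_h'$ is algebraically identical to the paper's use of the Stein equation $g_h'=\eta^{-1}(\htilde-\ga g_h)$, the nonnegativity of $H$ and $G$ is obtained from the monotonicity of $\ga$ in the same way (the paper avoids your case split by noting directly that $\ga(t)\geq\ga(x)$ on $(a,x)$ regardless of sign), and the extremizers $h(t)=t$ and the tent function at $x$ are precisely the ones used for optimality.
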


\begin{remark}\label{genrem3}
\begin{enumerate}[(a)]
\item If $a>-\infty$ and $b<\infty$, then it follows by an application of de l'H\^{o}pital's rule that the function $S(x):=\frac{\int_a^x(E[Z]-y)p(t)dt}{I(x)}$ is bounded on $(a,b)$. Indeed, if $a>-\infty$, for instance, we have that 
\begin{equation*}
0\leq\lim_{x\downarrow a}\frac{\int_a^x(E[Z]-y)p(t)dt}{I(x)}=\lim_{x\downarrow a}\frac{E[Z]-x}{\ga(x)}=\frac{E[Z]-a}{\ga(a)}
<\infty\,.
\end{equation*}
 However, in general $S(x)$ is unbounded, if $\abs{\ga(x)}$ does not grow at least linearly with $x$. For instance, 
if $Z\sim N(0,1)$ and $\ga(t)=-\sign(t)$, then we have for positive $x$ that
\[S(x)=\frac{\phi(x)}{1-\Phi(x)}\sim x\]
by the Gaussian Mills ratio inequality.
\item The bound for $\abs{g_h(x)}$ in part (a) of Proposition \ref{genprop4} can be written as 
\begin{equation*}
\abs{g_h(x)}\leq\fnorm{h'}\frac{\tau(x)}{\eta(x)}\,,
\end{equation*}
where $\tau$ is the so-called \textit{Stein factor} or \textit{Stein kernel} of $Z$ given by 
\begin{equation*}
\tau(x)=\frac{1}{p(x)}\int_a^x\bigl(E[Z]-t\bigr)p(t)dt\,,
\end{equation*}
i.e. $\tau$ is the function $\eta$ which belongs to the choice $\ga(x)=E[Z]-x$. The Stein kernel $\tau$ appeared first in Lecture $6$ of \cite{St86} and it has turned out to be a fundamental object in Stein's method for one-dimensional absolutely continuous distributions (see, e.g. \cite{NouPec09a}, \cite{NouVie09} and \cite{LRS14}).
\end{enumerate}
\end{remark}

\begin{cor}\label{gencor2}
Assume that $E\abs{Z}<\infty$, Condition \ref{gencond1} holds and that $\ga(x)=c(E[Z]-x)$ for some $c>0$. Then we have for any Lipschitz continuous test function $h:\abquer\rightarrow\R$ and each $x\in(a,b):$
\begin{enumerate}[{\normalfont (a)}]
 \item $\displaystyle\fnorm{g_h}\leq \frac{\fnorm{h'}}{c}\,$;
 \item $\displaystyle\abs{g_h'(x)}\leq\frac{2\fnorm{h'}}{c}\frac{H(x)G(x)}{\eta(x)^2p(x)}
=2c\fnorm{h'}\frac{\int_a^xF(s)ds\int_x^b(1-F(t))dt}{\eta(x)^2p(x)}$.
\end{enumerate}
\end{cor}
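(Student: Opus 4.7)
The plan is to read off both parts of the corollary directly from Proposition \ref{genprop4} by specializing to the linear coefficient $\gamma(x)=c(E[Z]-x)$. For part (a), I observe that
\begin{equation*}
I(x)=\int_a^x\gamma(t)p(t)dt=c\int_a^x(E[Z]-t)p(t)dt,
\end{equation*}
so the quotient appearing in the bound of Proposition \ref{genprop4}(a) collapses identically to $1/c$, yielding the pointwise inequality $\abs{g_h(x)}\leq\fnorm{h'}/c$ and hence the uniform bound in (a).

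For part (b), the key step is to establish the clean representations
\begin{equation*}
H(x)=c\int_a^x F(t)\,dt\qquad\text{and}\qquad G(x)=c\int_x^b(1-F(t))\,dt.
\end{equation*}
To obtain the first, I would integrate $I(x)=c\int_a^x(E[Z]-t)p(t)dt$ by parts with $u=E[Z]-t$ and $v=F(t)$, giving $I(x)=c(E[Z]-x)F(x)+c\int_a^x F(t)dt$, whence $H(x)=I(x)-\gamma(x)F(x)=c\int_a^x F(t)dt$. For $G=H+\gamma$, I would combine this with the tail-expectation identity
\begin{equation*}
E[Z]-x=\int_x^b(1-F(t))dt-\int_a^x F(t)dt,
\end{equation*}
which follows by decomposing $x-a=\int_a^x[F(t)+(1-F(t))]dt$ and using $E[Z]-a=\int_a^b(1-F(t))dt$. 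Substituting these expressions into the numerator of the bound in Proposition \ref{genprop4}(b) produces
\begin{equation*}
\int_a^x F(s)ds\cdot G(x)+\int_x^b(1-F(s))ds\cdot H(x)=\tfrac{1}{c}H(x)G(x)+\tfrac{1}{c}G(x)H(x)=\tfrac{2}{c}H(x)G(x),
\end{equation*}
which, divided by $p(x)\eta(x)^2$, yields both forms of the claimed bound.

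The only delicate point is the justification of the vanishing boundary contributions in the integration by parts and in the tail-expectation identity when $a=-\infty$ or $b=\infty$; these amount to showing $(E[Z]-t)F(t)\to 0$ as $t\downarrow a$ and $(E[Z]-t)(1-F(t))\to 0$ as $t\uparrow b$. Both follow from the hypothesis $E\abs{Z}<\infty$ via the standard observation that $\abs{t}F(t)\leq\int_{-\infty}^t\abs{s}p(s)ds\to 0$ as $t\to-\infty$, and symmetrically at $+\infty$. This is the main technical obstacle, but it is routine; the real content of the corollary is the algebraic simplification of Proposition \ref{genprop4} afforded by the tail-expectation identity.
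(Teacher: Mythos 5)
Your proposal is correct and follows essentially the same route as the paper: part (a) via the observation that $I(x)=c\int_a^x(E[Z]-t)p(t)dt$ collapses the quotient in Proposition \ref{genprop4}(a) to $1/c$, and part (b) via the identities $H(x)=c\int_a^xF(s)ds$ and $G(x)=c\int_x^b(1-F(s))ds$, which the paper obtains from Lemma \ref{genlemma1} and you re-derive directly by integration by parts and the tail-expectation identity. Your explicit treatment of the boundary terms at infinite endpoints is simply the content of that lemma, which the paper states separately and whose proof it defers to a reference.
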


\begin{remark}\label{genrem4}
\begin{enumerate}[(i)]
 \item In the case of the normal distribution (via its classical Stein equation) the bound given in Corollary \ref{gencor2} (a) reduces to $\fnorm{g_h}\leq\fnorm{h'}$. Formally, this bound is a special instance of a general bound given in Lemma 3.1 of \cite{GolRin96} for the multivariate standard normal distribution (see also Lemma 2.6 in \cite{CGS}). However, this lemma is stated under the additional assumption that $h$ has three bounded derivatives, which is stronger than being Lipschitz-continuous. Yet, as has been pointed out to me by the referee, one can use the generator representation of the solution to the Stein equation to obtain the same bound as in 
 Corollary \ref{gencor2} (a) for once differentiable test functions $h$ with bounded first derivative by applying the well-known consequences of the dominated convergence theorem on differentiating under the integral sign. 
 Then, using smoothing techniques, this result could be extended to the class of Lipschitz-continuous test functions, yielding an alternative proof of this bound.
 Nevertheless, in the context of Stein's method for the univariate normal distribution, the best bound mentioned on $g_h$ for a Lipschitz test function $h$ is $\fnorm{g_h}\leq2\fnorm{h'}$ (see, e.g. \cite{CGS} or \cite{CheShaSing}). Hence, we believe that Corollary \ref{gencor2} (a) is the first result that rigorously proves the aforementioned bound, although, as described above, it can also be proved by means of existing techniques from the generator framework. 
 \item For concrete distributions the ratio appearing in the bounds for $g_h'(x)$ may be bounded uniformly in $x$ by some constant which can sometimes also be computed explicitely. For instance, this is performed for the Beta distribution in Section \ref{Beta}. Furthermore, for the situation of Corollary \ref{gencor2}, in \cite{EdViq} the authors give mild conditions for the existence of a finite constant $k$ such that $\fnorm{g_h'}\leq k\fnorm{h'}$ for any Lipschitz-continuous $h$. In practice, these conditions are usually met. However, there is no hope of estimating the constant $k$ by their method of proof. Thus, for concrete distributions and explicit constants it might therefore by useful to work with our bounds from Corollary \ref{gencor2} (b) or from Proposition \ref{genprop4}.
\item For the normal distribution and also for the larger class of distributions discussed in \cite{EiLo10}, one also has a bound of the form $\fnorm{g_h''}\leq C\fnorm{h'}$ for some finite constant $C$ holding for each Lipschitz function $h$. As was shown by a universal counterexample in \cite{EdViq}, if $\ga(x)=c(E[Z]-x)$ such a bound cannot be expected unless $a=-\infty$ and $b=\infty$. If either $a>-\infty$ or $b<\infty$ one will have to assume that $h'$ is also Lipschitz, for example by demanding that $h$ has two bounded derivatives, in order to obtain a finite bound on $\fnorm{g_h''}$. Within the density approach, however, there are many examples of distributions, whose support is strictly included in $\R$ but for which such bounds are available (see, e.g., chapter 13 of \cite{CGS}).     
\end{enumerate}
\end{remark}

Now, we show how we can use the above results and the density formula \eqref{densform1} to give bounds on higher order 
derivatives of $g_h$, if $h$ itself is smooth enough. First note that the constant $K$ from \eqref{densform1} is given by 
\begin{equation}\label{Kform}
K=\eta(x_0)p(x_0)=\int_a^{x_0}\abs{\gamma(t)}p(t)dt=\int_{x_0}^b\abs{\gamma(t)}p(t)dt=\frac{E\abs{\gamma(Z)}}{2}
\end{equation}
and, hence, we have the explicit formula
\begin{equation}\label{densform2}
p(x)=\frac{E\abs{\gamma(Z)}}{2\eta(x)}\exp\Bigl(\int_{x_0}^x\frac{\gamma(t)}{\eta(t)}dt\Bigr)\,.
\end{equation}
Formula \eqref{densform2} is a more general version of formula (3.14) in \cite{NouVie09} and is also derived in 
\cite{KuTu12}.
Now, if the coefficient $\gamma$ is also absolutely continuous, by differentiating Stein's equation (\ref{gensteineq}), we obtain for $h$ Lipschitz
\begin{equation}\label{gensecder}
\eta(x)g_h''(x)+g_h'(x)\bigl(\eta'(x)+\ga(x)\bigr)=h'(x)-\ga'(x)g_h(x)=:h_2(x)\,. 
\end{equation}
This means, that the function $\tilde{g}:=g_h'$ is a solution of the Stein equation corresponding to the test function $h_2$ for the distribution of $\Ztilde$ which satisfies the Stein identity
\[E\Bigl[\eta(\Ztilde)f'(\Ztilde)+\bigl(\eta'(\Ztilde)+\ga(\Ztilde)\bigr)f(\Ztilde)\Bigr]=0\,.\]
From \eqref{densform2} we know that a density $\tilde{p}$ of $\Ztilde$ is given by
\begin{equation}\label{formelptilde}
 \tilde{p}(x)=\frac{\tilde{K}}{\eta(x)}\exp\Bigl(\int_{x_0}^x\frac{\eta'(t)+\ga(t)}{\eta(t)}dt\Bigr)=K\exp\Bigl(\int_{x_0}^x\frac{\ga(t)}{\eta(t)}dt\Bigr)=C\eta(x)p(x)\,,
\end{equation}
where $\tilde{K},K,C>0$ are suitable normalizing constants. Thus, if we have bounds for the first derivative of the Stein solutions for the distribution of $\Ztilde$ and for Lipschitz functions $h$, then from this observation we obtain bounds on $g_h''$ for $h$ such that $h_2$ is Lipschitz. Note that if $\ga(x)=c(E[Z]-x)$, this essentially means that $h'$ must be Lipschitz as well. Of course, in order to apply this procedure, one has to make sure that $E[h_2(\Ztilde)]=0$ and that $g_h'$ is the standard solution to the Stein equation for $\calL(\Ztilde)$ and the test function $h_2$. Remarkably, under mild conditions this turns out to always be the case. 

\begin{prop}\label{genprop5}
Assume that Conditions \ref{gencond1} and \ref{gencond2} hold, $E\abs{Z}<\infty$,\\
 $E\abs{Z\ga(Z)}<\infty$ and that $\gamma$ is locally absolutely continuous on $(a,b)$. Furthermore, let $h$ be a Lipschitz-continuous function. Then, $E[h_2(\Ztilde)]$ exists and equals $0$. 
Furthermore, if either the derivative $g_h'$ of $g_h$  is bounded, $a>-\infty$ or $b<\infty$, then $g_h'$ is the standard solution to the Stein equation 
\begin{equation}\label{steineqder}
\eta(x) f'(x)+ \bigl(\eta'(x)+\ga(x)\bigr)f(x)=h_2(x)
\end{equation}
corresponding to the distribution of $\Ztilde$ and the test function $h_2=h'-\gamma'g_h$. 
\end{prop}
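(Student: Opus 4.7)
My plan is built around a single exact-derivative identity. Multiplying the Stein equation $\eta g_h'+\ga g_h=\htilde$ by $I=\eta p$ yields $I\eta g_h'=I(\htilde-\ga g_h)$. Differentiating the right-hand side by the product rule, using $I'=\ga p$, and then substituting the Stein equation a second time to cancel $\ga p\eta g_h'=\ga Ig_h'$ against $-I\ga g_h'$, one obtains
\[
\frac{d}{dx}\Bigl[I(x)\bigl(\htilde(x)-\ga(x)g_h(x)\bigr)\Bigr]=I(x)h_2(x)=\eta(x)p(x)h_2(x).
\]
This single identity will drive both parts of the proposition.

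For the vanishing of $E[h_2(\Ztilde)]$, I use the representation $\tilde p=C\eta p$ from \eqref{formelptilde} to rewrite $E[h_2(\Ztilde)]=C\int_a^b h_2\eta p\,dx$ and integrate the displayed identity over $(a,b)$, giving
\[
E[h_2(\Ztilde)]=C\bigl[I(x)\bigl(\htilde(x)-\ga(x)g_h(x)\bigr)\bigr]_a^b.
\]
It then remains to verify that both boundary values vanish. For the contribution $I(x)\htilde(x)$, the Lipschitz property of $h$ gives at-most-linear growth $|\htilde(x)|\le C_1+\fnorm{h'}|x|$, so the question reduces to showing $|x|I(x)\to0$ at each endpoint; monotonicity of $\ga$ combined with the inequality $|x|\le|t|$ along the appropriate half-line bounds $|x|I(x)$ by a tail of $\int|t\ga(t)|p(t)\,dt$, which vanishes by $E|Z\ga(Z)|<\infty$. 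For the contribution $I(x)\ga(x)g_h(x)$, Proposition \ref{genprop4}(a) delivers $|g_h(x)I(x)|\le\fnorm{h'}\int_a^x|E[Z]-y|p(y)\,dy$; splitting $|E[Z]-y|\le|E[Z]|+|y|$ and applying the same monotonicity argument to the extra factor $|\ga(x)|$ reduces everything to tails controlled by $E|Z|<\infty$ and $E|Z\ga(Z)|<\infty$.

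For the second assertion, integrating the key identity from $a$ only up to $x\in(a,b)$ yields
\[
\eta(x)^2p(x)g_h'(x)=L+\int_a^x h_2(t)\eta(t)p(t)\,dt,\quad L:=\lim_{y\downarrow a}I(y)\bigl(\htilde(y)-\ga(y)g_h(y)\bigr).
\]
Since $E[h_2(\Ztilde)]=0$ has just been established, formula \eqref{gensteinsol} applied to the density $\tilde p$ and to the test function $h_2$ identifies the standard solution of \eqref{steineqder} as $\eta(x)^{-2}p(x)^{-1}\int_a^x h_2(t)\eta(t)p(t)\,dt$; hence $g_h'$ coincides with it if and only if $L=0$. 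Each of the three listed hypotheses makes $L=0$ essentially immediate: if $a>-\infty$ then Proposition \ref{genprop2}(a) gives $\htilde-\ga g_h$ a finite limit at $a$ so the product with $I(a+)=0$ vanishes; if only $b<\infty$ holds, the symmetric variant (integrating from $x$ to $b$ and using Proposition \ref{genprop2}(b) at $b$) applies; and if $g_h'$ is bounded, the identity $L=\lim I(y)\eta(y)g_h'(y)$ combined with $I(a+)=0$ and control of $\eta$ near a finite or interior boundary forces $L=0$. The main technical obstacle is precisely the boundary analysis at infinite endpoints, which is exactly the case the listed hypotheses are designed to sidestep and which otherwise demands the delicate combination of the monotonicity of $\ga$, the $|g_h|$-bound from Proposition \ref{genprop4}, and the moment hypothesis $E|Z\ga(Z)|<\infty$ outlined above.
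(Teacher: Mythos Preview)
Your derivative identity $\frac{d}{dx}\bigl[I(x)(\htilde(x)-\ga(x)g_h(x))\bigr]=I(x)h_2(x)$ is correct and elegant, and it is a genuinely different route from the paper's. The paper never writes down this antiderivative; instead it verifies $E[h_2(\Ztilde)]=0$ by splitting $h_2=h'-\ga'g_h$, computing $\int_a^b h'\eta p$ and $\int_a^b\ga' g_h\eta p$ separately via Fubini (each one shown to equal $-\int_a^b h\ga p$), and then handles the second assertion by the homogeneous-solution argument $g_h'=f+c(\eta^2 p)^{-1}$ together with endpoint limits. Your single identity collapses both steps into boundary analysis of one function.

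There is, however, a genuine gap: the proposition asserts that $E[h_2(\Ztilde)]$ \emph{exists}, i.e.\ that $\int_a^b|h_2(x)|\eta(x)p(x)\,dx<\infty$. Your argument only shows that the improper integral $\int_a^b h_2\eta p$ converges (to zero), because you exhibit a primitive with vanishing endpoint limits. Convergence of an improper integral does not imply absolute integrability, and without the latter you cannot interpret the integral as $C^{-1}E[h_2(\Ztilde)]$, nor can you be sure that the standard solution \eqref{gensteinsol} for $\Ztilde$ is well defined via $\int_a^x h_2\eta p$. You need a separate estimate such as the paper's: bound $\int_a^{x_0}|\ga'(x)||g_h(x)|\eta(x)p(x)\,dx$ by Fubini and the moment hypothesis $E|Z\ga(Z)|<\infty$, and bound $\int_a^b|h'|\eta p$ by $\fnorm{h'}\int_a^b\eta p<\infty$ (the latter finiteness again following from $E|Z\ga(Z)|<\infty$).

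Once this gap is filled, note that your boundary analysis for the first part already proves $L=0$ at \emph{any} endpoint, finite or infinite, using only the standing moment assumptions. Your case-by-case treatment of the three hypotheses in the second part is therefore redundant, and in fact you have proved slightly more than the proposition states: the conclusion that $g_h'$ is the standard solution holds without assuming any of ``$g_h'$ bounded, $a>-\infty$, or $b<\infty$''. (Incidentally, your handling of the ``$g_h'$ bounded'' case is unclear as written---``control of $\eta$ near a finite or interior boundary'' does not parse---but since that case is subsumed by your general argument this does not matter.)
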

The proof is given in Section \ref{proofs}.\\

\begin{remark}\label{secderrem}
If $Z\sim Beta(a,b)$, then \eqref{formelptilde} implies that $\Ztilde\sim Beta(a+1,b+1)$. This will be used in Section \ref{Beta} to provide bounds on higher order derivatives of $g_h$ in the case of the Beta distribution. If, on the other hand, $Z\sim \Exp(\alpha)$ has an exponential distribution with mean $\alpha^{-1}$, 
then $\Ztilde\sim Gamma(2,\alpha)$ has an Erlang distribution. Using this fact, Proposition \ref{genprop5} and the general bounds from Corollary \ref{gencor2} applied for both the exponential and the $Gamma(2,\alpha)$ distribution, one can, with some work, derive the following bounds on the standard solution $g_h$ to the 
Stein equation 
\begin{equation*}\label{steineqexp}
 xg'(x)+(1-\al x)g(x)=h(x)-Eh(Z_\al)
\end{equation*}
for the distribution $\Exp(\alpha)$, if $h$ is continuously differentiable on $[0,\infty)$ and both $h$ and $h'$ are Lipschitz:
\begin{equation*}
 \fnorm{g_h}\leq\frac{1}{\alpha}\fnorm{h'},\quad\fnorm{g_h'}\leq\fnorm{h'}\quad\text{and}\quad\fnorm{g_h''}\leq \frac{2\al}{3}\fnorm{h'}+\frac{2}{3}\fnorm{h''}
\end{equation*}
These bounds are better than those derived in \cite{FulRos13} and, additionally, since we do not have to assume that $h'(0)=0$ for the bound on $\fnorm{g_h''}$ to be valid, one term in the bounds of Theorems 1.1 and 1.2 from \cite{FulRos13} would drop off, if instead our bounds were used.
\end{remark}

Next, we introduce the approach of exchangeable pairs satisfying the regression properties \eqref{genreg} and \eqref{secmom} in our general framework. As was observed in \cite{Ro08} for the normal distribution, in case of univariate distributional approximations, one does not need the full strength of exchangeability, but equality in distribution of the random variables $W$ and $W'$ is sufficient. This may allow for a greater choice of admissible couplings in several situations, or at least, relaxes the verification of asserted properties. Thus, let $W,W'$ be real-valued random variables defined on the same probability space such that $W\stackrel{\D}{=}W'$.
We will assume, that the random variables $W$ and $W'$ only have values in an interval $(a,b)\subseteq J\subseteq\abquer$ where both functions $\eta$ and $\ga$ are defined (recall that it might be the case that $\eta$ can only be defined on $(a,b)$). However, from Proposition \ref{etaprop} we know that we can let $J=\abquer$ if 
Condition \ref{gencond3} holds.

\begin{prop}\label{genpluginprop1}
Assume that Conditions \ref{gencond1} and \ref{gencond2} hold and that $W$ is square integrable with $E\abs{\ga(W)}<\infty$. Furthermore, for some constant $\lambda>0$, assume that the general regression property (\ref{genreg}) and also the second moment condition \eqref{secmom} are satisfied by the pair $(W,W')$. 
Let $f:J\rightarrow\R$ be a bounded, continuously differentiable function, which is Lipschitz-continuous and has a Lipschitz-continuous derivative $f'$. Then,
\begin{align}\label{genplugin1}
\Bigl|E\bigl[\eta(W)f'(W)+\ga(W)f(W)\bigr]\Bigr|&\leq\frac{\fnorm{f''}}{6\la}E\bigl[\abs{W'-W}^3\bigr]\nonumber\\
&\;+\fnorm{f}E\abs{R}+\fnorm{f'}E\abs{S}\,,
\end{align}
where $\fnorm{f''}$ denotes the minimum Lipschitz constant of $f'$.
\end{prop}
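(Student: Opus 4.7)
The plan is the standard ``Taylor plus exchangeability'' argument, adapted to the weaker hypothesis $W\stackrel{\D}{=}W'$ (not full exchangeability). Fix any primitive $F$ of $f$ on the interval $J$; since $f$ is bounded one has $|F(x)-F(x_0)|\le\fnorm{f}\,|x-x_0|$ for any reference point $x_0\in J$, so square-integrability of $W$ ensures $E|F(W)|<\infty$ and, by equality in distribution, $E|F(W')|<\infty$ with $E[F(W')-F(W)]=0$. Since $f$ is continuously differentiable with Lipschitz derivative, $\fnorm{f''}$ is finite and Taylor's theorem with integral remainder gives, pointwise,
\begin{equation*}
F(W')-F(W)=f(W)(W'-W)+\tfrac{1}{2}f'(W)(W'-W)^2+\rho,\qquad |\rho|\le\tfrac{|W'-W|^3}{6}\fnorm{f''}.
\end{equation*}

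Taking expectations and invoking $E[F(W')-F(W)]=0$ yields the unconditional identity
\begin{equation*}
\Bigl|E\bigl[(W'-W)f(W)\bigr]+\tfrac{1}{2}E\bigl[(W'-W)^2 f'(W)\bigr]\Bigr|\le\tfrac{\fnorm{f''}}{6}\,E\bigl[|W'-W|^3\bigr].
\end{equation*}
Next, the tower property together with \eqref{genreg} and \eqref{secmom} converts both conditional regression hypotheses into unconditional statements:
\begin{equation*}
E[(W'-W)f(W)]=\lambda\,E[\gamma(W)f(W)]+\lambda\,E[Rf(W)],
\end{equation*}
\begin{equation*}
E[(W'-W)^2 f'(W)]=2\lambda\,E[\eta(W)f'(W)]+2\lambda\,E[Sf'(W)].
\end{equation*}
Substituting these into the previous display, dividing through by $\lambda$, and applying the triangle inequality together with the crude bounds $|E[Rf(W)]|\le\fnorm{f}\,E|R|$ and $|E[Sf'(W)]|\le\fnorm{f'}\,E|S|$ produces exactly \eqref{genplugin1}.

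The only delicate point is bookkeeping of integrability: square-integrability of $W$ controls $F(W),F(W'),W'-W$ and $(W'-W)^2$; the hypothesis $E|\gamma(W)|<\infty$ controls $E[\gamma(W)f(W)]$ (in fact already $\fnorm{f}\,E|\gamma(W)|$ suffices); boundedness of $f$ and $f'$ handles the $R$- and $S$-terms; and $E|\eta(W)f'(W)|<\infty$ follows automatically from $E(W'-W)^2<\infty$ via \eqref{secmom}. If $E|W'-W|^3=\infty$ the claim is vacuous, so there is no loss in assuming it finite, in which case $\rho$ is integrable and the expectation step above is rigorous. Observe that $W\stackrel{\D}{=}W'$ enters the proof only through the scalar identity $E[F(W)]=E[F(W')]$, which is the precise reason why full exchangeability of the pair $(W,W')$ can be relaxed to equality of marginal laws, as emphasized in the paragraph preceding the proposition.
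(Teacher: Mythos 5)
Your proof is correct and follows essentially the same route as the paper: a second-order Taylor expansion of a primitive of $f$, the identity $E[F(W')]=E[F(W)]$ from equality in distribution, the tower property applied to \eqref{genreg} and \eqref{secmom}, and the bound $\fnorm{f''}|W'-W|^3/6$ on the Taylor remainder. The only difference is cosmetic (you isolate the remainder $\rho$ pointwise, while the paper keeps the integral form $\int_0^1(1-s)f'(W+s(W'-W))\,ds$ and splits off $\tfrac12 f'(W)$ afterwards), so there is nothing further to add.
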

The proof is given in Section \ref{proofs}.

\begin{remark}\label{plugrem}
\begin{enumerate}[(i)]
\item The bound \eqref{genplugin1} can only be small, if $S$ and $R$ are of negligible order.
\item The proof shows, that Proposition \ref{genpluginprop1} can easily be generalized to the situation, where there is a sub-$\sigma$-algebra $\F$ with 
$\sigma(W)\subseteq\F$ and the more general regression properties 
\begin{equation}\label{genreg2}
\frac{1}{\lambda}E\bigl[W'-W|\F\bigr]=\ga(W)+R\quad\text{and}\quad\frac{1}{2\lambda}E\bigl[(W'-W)^2|\F\bigr]=\eta(W)+S
\end{equation}
hold for some $\F$-measurable remainder terms $R$ and $S$. 
\item If $\calH$ is some class of test functions, such that there are finite, positive constants $c_0$, $c_1$ and $c_2$ with
$\fnorm{g_h}\leq c_0$, $\fnorm{g_h'}\leq c_1$ and $\fnorm{g_h''}\leq c_2$ for each $h\in\calH$, then \eqref{genplugin1} 
immediately yields a bound on the distance
\[d_{\calH}\bigl(\calL(Z),\calL(W)\bigr)=\sup_{h\in\calH}\Bigl|E\bigl[h(W)\bigr]-E\bigl[h(Z)\bigr]\Bigr|\,.\] 
\end{enumerate}
\end{remark}

Finally, in our general framework, we readdress the last issue discussed in Section \ref{motivation}. Namely, we suppose that we are given two functions $\ga$ and $\eta$, such that for some $-\infty\leq a<b\leq\infty$ the function $\ga$ is defined on $\abquer$, $\eta$ is defined at least on $(a,b)$ and the following properties hold. 
\begin{cond}\label{gencond4}
\begin{enumerate}[(a)]
\item The function $\ga$ is decreasing and such that $0<\ga(a+)\leq\infty$ and $-\infty\leq\ga(b-)<0$. Again, we define $x_0\in(a,b)$ by\\ $x_0:=\sup\{x\in(a,b)\,:\,\ga(x)>0\}$. 
\item The function $\eta$ is positive and locally absolutely continuous on $(a,b)$.
\item The function $\ga/\eta$ is locally integrable on $(a,b)$ and, if we define 
\[Q(x):=\int_{x_0}^x\frac{\ga(t)}{\eta(t)}dt\,,\quad x\in(a,b)\,,\]
then we have $Q(a+)=Q(b-)=-\infty$.
\end{enumerate}
\end{cond}

Note that by definition we have $Q(x)\leq 0$ for all $x\in(a,b)$, if Condition \ref{gencond4} is satisfied. Now, we define the density $p$ by relation \eqref{densform1} with $K$ being a suitable normalizing constant. The existence of $K$ follows from the fact that, by Condition \ref{gencond4}, for each $c\in(a,x_0)$ there is a finite constant $L>0$ 
such that $L\ga(x)\geq 1$ for each $x\in(a,c)$. Thus, 
\begin{align}\label{df1}
 \int_a^c\frac{1}{\eta(x)}\exp\left(\int_{x_0}^x\frac{\ga(t)}{\eta(t)}dt\right)dx&\leq L\int_a^c\frac{\ga(x)}{\eta(x)}\exp\left(\int_{x_0}^x\frac{\ga(t)}{\eta(t)}dt\right)dx\notag\\
 &=L\int_a^c Q'(x)\exp(Q(x))dx=L\int_{-\infty}^{Q(c)}e^udu\notag\\
 &=Le^{Q(c)}<\infty\,.
\end{align}
A similar calculation shows that also 
\[\int_d^b \frac{1}{\eta(x)}\exp\left(\int_{x_0}^x\frac{\ga(t)}{\eta(t)}dt\right)dx<\infty\]
for each $d\in(x_0,b)$. Hence, $p$ can be suitably normalized. Now, let $Z$ be a random variable with probability density function $p$. The next result is a generalization of Lemma 3, Lecture 6 in \cite{St86}.

\begin{theorem}\label{etagammatheo}
If Condition \ref{gencond4} is satisfied, then the density $p$ defined by \eqref{densform1} is such that 
\begin{equation*}
E[\ga(Z)]=\int_a^b\ga(t)p(t)dt=0\quad\text{and}\quad \eta(x)=\frac{1}{p(x)}\int_a^x\ga(t)p(t)dt\,,\quad a<x<b\,. 
\end{equation*}
In particular, the theory developed in this section can be applied in this framework.
\end{theorem}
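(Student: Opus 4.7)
The key algebraic observation is that the definition \eqref{densform1} of $p$ rewrites as
\[
 p(x)\eta(x)=K\exp(Q(x)),\qquad a<x<b,
\]
so that the whole statement reduces to computing the ``antiderivative'' $p\eta$ and evaluating its boundary limits. My plan is to read the theorem off this identity by the fundamental theorem of calculus.

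First I would verify that $Q$ is locally absolutely continuous on $(a,b)$: this is exactly item (c) of Condition~\ref{gencond4}, which states that $\gamma/\eta$ is locally integrable and therefore $Q'=\gamma/\eta$ a.e.\ on $(a,b)$. Composing with the exponential, which is $C^{1}$ (hence Lipschitz on any compact range of $Q$ on a given compact subinterval of $(a,b)$), shows that $p\eta=K\exp\circ Q$ is locally absolutely continuous on $(a,b)$, with
\[
 (p\eta)'(t)=K\exp(Q(t))\,\frac{\gamma(t)}{\eta(t)}=\gamma(t)p(t)\qquad\text{a.e.\ on }(a,b).
\]
Hence, for any $a<y<x<b$, the fundamental theorem of calculus for Lebesgue integration gives
\[
 \int_y^x \gamma(t)p(t)\,dt=p(x)\eta(x)-p(y)\eta(y).
\]

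Next I would analyse the boundary behaviour. Since $p(t)\eta(t)=K\exp(Q(t))$ and, by Condition~\ref{gencond4}(c), $Q(a+)=Q(b-)=-\infty$, we have
\[
 \lim_{y\downarrow a}p(y)\eta(y)=\lim_{x\uparrow b}p(x)\eta(x)=0.
\]
Now observe that by the sign description \eqref{gapn} of $\gamma$, the integrand $\gamma(t)p(t)$ is nonnegative on $(a,x_0)$ and nonpositive on $(x_0,b)$. Letting $y\downarrow a$ in the display above for $x\in(a,x_0]$ and applying monotone convergence yields
\[
 \int_a^x \gamma(t)p(t)\,dt=p(x)\eta(x)\in[0,\infty),\qquad a<x\le x_0,
\]
and in particular $\int_a^{x_0}\gamma(t)p(t)\,dt=p(x_0)\eta(x_0)=K$. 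Similarly, splitting at $x_0$ for $x\in[x_0,b)$ and using the FTC on $[x_0,x]$ gives
\[
 \int_a^x \gamma(t)p(t)\,dt=K+\bigl(p(x)\eta(x)-K\bigr)=p(x)\eta(x)
\]
and, letting $x\uparrow b$ with monotone convergence on $(x_0,b)$, the total integral is $0$. Together these two steps prove simultaneously that $E|\gamma(Z)|=2K<\infty$, $E[\gamma(Z)]=0$, and the identity $\eta(x)=p(x)^{-1}\int_a^x \gamma(t)p(t)\,dt$ for every $x\in(a,b)$.

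The only delicate step is the last one: one must justify integrability of $\gamma(\cdot)p(\cdot)$ near the boundaries before taking limits in the FTC identity. This is handled by the constant-sign property of $\gamma$ on the two subintervals $(a,x_0)$ and $(x_0,b)$ (which allows monotone convergence) together with the crucial boundary vanishing $p\eta\to 0$, which in turn relies on the divergence assumption $Q(a+)=Q(b-)=-\infty$. Once these ingredients are in place the identities of the theorem follow directly, and item (iii) of Condition~\ref{gencond2} is automatic, so the theory of Section~\ref{abstract} applies without further assumptions.
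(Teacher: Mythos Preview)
Your proof is correct and is essentially the same argument as the paper's: both rest on the identity $p\eta=K\exp(Q)$, from which $(p\eta)'=\gamma p$ and the boundary vanishing $p\eta\to 0$ (via $Q\to-\infty$) immediately give the two conclusions. The paper phrases this as the explicit substitution $\int \gamma p=\int KQ'e^{Q}=Ke^{Q}$ evaluated on $(a,x_0)$ and $(x_0,b)$, while you phrase it as the fundamental theorem of calculus for $p\eta$ together with monotone convergence at the endpoints; the content is identical, with your version being slightly more careful about justifying integrability near the boundary.
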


\begin{proof}
Similarly to \eqref{df1} we obtain 
\begin{align*}
\int_a^{x_0}\ga(x)p(x)dx&=K \int_a^{x_0}\frac{\ga(x)}{\eta(x)}\exp\left(\int_{x_0}^x\frac{\ga(t)}{\eta(t)}dt\right)dx\notag\\
&=Ke^{Q(x_0)}=K
\end{align*}
and 
\begin{equation*}
\int_{x_0}^b \ga(x)p(x)dx=-Ke^{Q(x_0)}=-K\,.
\end{equation*}
Thus, $E[\ga(Z)]=0$. The second claim follows from 
\begin{align*}
 \frac{1}{p(x)}\int_a^x\ga(t)p(t)dt&=\frac{1}{p(x)}K\int_a^x\frac{\ga(s)}{\eta(s)}\exp\left(\int_{x_0}^s\frac{\ga(t)}{\eta(t)}dt\right)ds\notag\\
 &=\frac{1}{p(x)}Ke^{Q(x)}=\eta(x)\,,
\end{align*}
since 
\begin{equation*}
 p(x)=\frac{K}{\eta(x)}e^{Q(x)}\,.
\end{equation*}
\end{proof}

\section{Stein's method for the Beta distribution}\label{Beta}
In this section we specialize the theory from Section \ref{abstract} to the family $Beta(a,b)$, $a,b>0$, of Beta distributions as defined in Section \ref{motivation}. Let us fix $a,b>0$ and from now on assume that $Z\sim Beta(a,b)$. Motivated by the P\'{o}lya urn example, the above constructed exchangeable pair $(W,W')$  and by Proposition \ref{polyaprop1} we define the function $\ga:=\ga_{a,b}$ as in Proposition \ref{polyaprop1} and observe that 
\begin{equation*}
E[\ga(Z)]=0\quad\text{since}\quad E[Z]=\frac{a}{a+b}\,.
\end{equation*}
It is thus easy to see that $\ga$ satisfies all assumptions of Condition \ref{gencond2} and also that the Beta density 
$p:=p_{a,b}$ given by \eqref{densbeta} satisfies Conditions \ref{gencond1} and \ref{gencond3}, the latter either directly or by Proposition \ref{propbs}. We claim that the function $\eta$ defined by \eqref{defeta} is given by 
\begin{equation}\label{etabeta}
\eta(x)=x(1-x)\,,\quad x\in[0,1]\,.
\end{equation}
This is equivalent to proving that 
\begin{equation}\label{eta1}
p(x)x(1-x)=\int_0^x\bigl(a-(a+b)t\bigr)p(t)dt\,,\quad 0<x<1\,,
\end{equation}
which easily follows from differentiating both sides of \eqref{eta1} and using \eqref{logdevbeta}. Thus, from Proposition 
\ref{steincharprop} we immediately obtain the following Stein characterization for the Beta distribution. This result substantially extends Theorem 1 in \cite{Sch01} in the case of the Beta distribution, which is weaker as 
it only characterizes the Beta distribution among the class of absolutely continuous distributions with finite second moment.

\begin{prop}\label{charbeta}
A random variable $X$ with values in $[0,1]$ has the distribution $Beta(a,b)$ if and only if for each continuous function 
$f$ on $[0,1]$, which is locally absolutely continuous on $(0,1)$ such that $E\abs{Z(1-Z)f'(Z)}<\infty$, we have 
\begin{equation*}
E\Bigl[X(1-X)f'(X)\Bigr]=(a+b)E\Bigl[\Bigl(X-\frac{a}{a+b}\Bigr)f(X)\Bigr]\,.
\end{equation*}
\end{prop}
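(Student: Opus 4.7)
The plan is to deduce this statement directly from the abstract Stein characterization in Proposition \ref{steincharprop} once we have identified the ingredients for the triple $(p,\gamma,\eta)$ in the Beta case. Since Proposition \ref{steincharprop} applies verbatim provided Conditions \ref{gencond1} and \ref{gencond2} hold and the function $\eta$ is the one defined by \eqref{defeta}, the entire argument reduces to checking these three points.

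First, I would check Condition \ref{gencond1}: the density $p_{a,b}$ is strictly positive on $(0,1)$ and, being a product of two powers with exponents $a-1$ and $b-1$, is $C^\infty$ on $(0,1)$, hence locally absolutely continuous there. Next, I would verify Condition \ref{gencond2} for $\gamma(x)=(a+b)\bigl(\tfrac{a}{a+b}-x\bigr)=a-(a+b)x$: it is affine, hence measurable and strictly decreasing on $[0,1]$, and since $E[Z]=a/(a+b)$ for $Z\sim Beta(a,b)$ we have $E[\gamma(Z)]=a-(a+b)\cdot\tfrac{a}{a+b}=0$, with $E|\gamma(Z)|<\infty$ trivially since $\gamma$ is bounded on $[0,1]$.

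The key computational step is to identify $\eta$ given by \eqref{defeta} with $x(1-x)$. Rather than computing the integral $\int_0^x \gamma(t)p_{a,b}(t)\,dt$ directly (which would involve incomplete Beta functions), I would verify the equivalent identity \eqref{eta1}, namely
\begin{equation*}
p_{a,b}(x)\,x(1-x)=\int_0^x\bigl(a-(a+b)t\bigr)p_{a,b}(t)\,dt,\qquad 0<x<1,
\end{equation*}
by differentiation. Both sides vanish at $x=0$, so it suffices to match derivatives. The right-hand side differentiates to $(a-(a+b)x)p_{a,b}(x)$; the left-hand side differentiates to $p_{a,b}(x)\bigl[\psi_{a,b}(x)x(1-x)+(1-2x)\bigr]$, and substituting the formula \eqref{logdevbeta} for the logarithmic derivative $\psi_{a,b}(x)=\frac{a-1-(a+b-2)x}{x(1-x)}$ collapses the bracket to $a-(a+b)x$. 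This confirms $\eta(x)=x(1-x)$, and then the integrability condition in Proposition \ref{steincharprop}, which reads $\int_0^1|f'(x)|I(x)\,dx=E|\eta(Z)f'(Z)|=E|Z(1-Z)f'(Z)|<\infty$, matches the hypothesis stated in the proposition.

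Finally, substituting $\eta(x)=x(1-x)$ and $\gamma(x)=a-(a+b)x=-(a+b)(x-\tfrac{a}{a+b})$ into the Stein identity $E[\eta(X)f'(X)]=-E[\gamma(X)f(X)]$ of Proposition \ref{steincharprop} yields exactly
\begin{equation*}
E\bigl[X(1-X)f'(X)\bigr]=(a+b)\,E\Bigl[\bigl(X-\tfrac{a}{a+b}\bigr)f(X)\Bigr],
\end{equation*}
which is the asserted characterization. The only nontrivial ingredient here is the identification of $\eta$; once this is in hand, the proposition is an immediate specialization of the abstract result. I do not anticipate any real obstacle, as the Beta density is smooth enough on $(0,1)$ that none of the boundary or regularity issues treated in Section \ref{abstract} cause trouble.
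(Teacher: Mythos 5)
Your proposal is correct and follows essentially the same route as the paper: verify Conditions \ref{gencond1} and \ref{gencond2} for the Beta density and $\gamma(x)=a-(a+b)x$, identify $\eta(x)=x(1-x)$ by differentiating the identity \eqref{eta1} with the help of \eqref{logdevbeta}, and then invoke Proposition \ref{steincharprop}. The differentiation computation checks out, so there is nothing to add.
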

 
For the Beta distribution and a mesaurable function $h$ with $E\abs{h(Z)}<\infty$, the Stein equation \eqref{gensteineq} is given by 
\begin{equation}\label{betasteineq}
x(1-x)g'(x)+(a+b)\Bigl(\frac{a}{a+b}-x\Bigr)g(x)=h(x)-E[h(Z)]\,,\quad x\in[0,1]
\end{equation}
and the standard solution \eqref{gensteinsol} has the form 
\begin{equation}\label{betasteinsol}
g_h(x)=\frac{1}{x(1-x)p(x)}\int_0^x\htilde(t)p(t)dt=\frac{-1}{x(1-x)p(x)}\int_x^1\htilde(t)p(t)dt\,,\quad 0<x<1
\end{equation}
where, again, $\htilde(t)=h(t)-E[h(Z)]$ and 
\begin{equation}\label{betasteinsol2}
g_h(0)=\frac{h(0+)-E[h(Z)]}{a}\quad\text{and}\quad g_h(1)=\frac{h(1-)-E[h(Z)]}{-b}\,,
\end{equation}
if $h$ has a right limit at $0$ and a left limit at $1$ by Proposition \ref{genprop2}. We mention that the same Stein equation \eqref{betasteineq} has already been considered in \cite{Sch01}, \cite{GolRei13} and in \cite{Doe12c}.

For $a,b>0$ define the constant
\begin{align}
 C(a,b)&=2(a+b)
\begin{cases}
 B(a,b),&a\leq 1,\;b\leq1\\
 a^{-1},&a\leq1,\;b>1\\
 b^{-1},&a>1,\;b\leq1\\
 a^{-1}b^{-1}B(a,b)^{-1},&a>1,\;b>1 
\end{cases}\quad\text{if}\quad a\not=b\quad\text{and}\label{lipcon1}\\
 C(a,a)&=\begin{cases}
  4,&0<a<1\\
  \frac{2a\sqrt{\pi}\Gamma(a)}{\Gamma(a+1/2)},&a\geq1.
 \end{cases}\label{lipcon2}
\end{align}
From Proposition \ref{genprop3} and Corollary \ref{gencor2} we can derive the following bounds for the solution \eqref{betasteinsol} 
to \eqref{betasteineq}. The proof is given in Section \ref{proofs}.
\begin{prop}\label{betabounds}
Let $h:[0,1]\rightarrow\R$ be Borel-measurable with $E\abs{h(Z)}<\infty$.
\begin{enumerate}[{\normalfont(a)}]
\item If $h$ is bounded, then $\displaystyle\fnorm{g_h}\leq\frac{\fnorm{h-E[h(Z)]}}{2m(1-m)p(m)}$, where $m$ is the median of $Beta(a,b)$.
\item If $h$ is Lipschitz, then $\displaystyle\fnorm{g_h}\leq\frac{\fnorm{h'}}{a+b}$ and $\fnorm{g_h'}\leq C(a,b)\fnorm{h'}$, where 
$C(a,b)$ is given by \eqref{lipcon1} and \eqref{lipcon2}.
\item If $h$ is continuously differentiable with Lipschitz derivative $h'$, then $g_h'$ is Lipschitz and 
$\displaystyle\fnorm{g_h''}\leq C(a+1,b+1)\fnorm{h''}+(a+b) C(a+1,b+1)C(a,b)\fnorm{h'}$.
\item More generally, if $m\geq1$ is an integer and $h$ is at least $(m-1)$-times differentiable such that $h^{(j)}$ is Lipschitz-continuous for $j=0,\dotsc,m-1$, then $\fnorm{g_h^{(m-1)}}$ is Lipschitz and 
\begin{align*}
 \fnorm{g_h^{(m)}}&\leq C(a+m-1,b+m-1)\\
 &\;\cdot\sum_{j=1}^m\Biggl(\prod_{l=j}^{m-1}\bigl(l(a+b+l-1)C(a+l-1,b+l-1)\bigr)\Biggr)\fnorm{h^{(j)}}\,,
\end{align*}

where we define an empty product to be equal to $1$.
\end{enumerate}
\end{prop}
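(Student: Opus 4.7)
All four parts specialize the abstract bounds of Section \ref{abstract}; the Beta case fits the framework with $\eta(x) = x(1-x)$, $\gamma(x) = a - (a+b)x$, and $I(x) = \eta(x)p(x) = x(1-x)p(x)$, all established above. Part (a) is immediate: Proposition \ref{genprop3} applied with $I(m) = m(1-m)p(m)$ gives the stated bound.

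For part (b), the inequality $\fnorm{g_h} \leq \fnorm{h'}/(a+b)$ is a direct consequence of Corollary \ref{gencor2}(a) with $c = a+b$. The bound $\fnorm{g_h'} \leq C(a,b)\fnorm{h'}$ is the main obstacle and the place where real work is needed. The strategy is to start from Corollary \ref{gencor2}(b) and, using the elementary majorizations $\int_0^x F(s)\,ds \leq xF(x)$ and $\int_x^1(1-F(t))\,dt \leq (1-x)(1-F(x))$, reduce to showing
$$
\sup_{x\in(0,1)} S(x) = \sup_{x\in(0,1)} \frac{F(x)(1-F(x))}{x(1-x)p(x)} \leq \frac{C(a,b)}{2(a+b)}.
$$
I then split into the four regimes defining $C(a,b)$: namely $a,b\leq 1$; $a\leq 1<b$; $b\leq 1<a$; and $a,b>1$. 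The boundary behaviour of $S$ near $0$ and $1$ is handled by de l'H\^{o}pital's rule, and the interior maximum is estimated by exploiting the explicit form $p(x) = x^{a-1}(1-x)^{b-1}/B(a,b)$ together with the monotonicity behaviour of $S$ in each regime. The sharper constant $C(a,a)$ appearing when $a=b$ would then come from using the symmetry of $Beta(a,a)$ about $1/2$ to locate the maximum of $S$ at $x=1/2$ and evaluating $p(1/2)$ via the Legendre duplication formula, which produces the factor $\Gamma(a)/\Gamma(a+1/2)$.

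Part (c) follows by a clean application of Proposition \ref{genprop5}. A one-line computation gives $\eta'(x) + \gamma(x) = (1-2x)+a-(a+b)x = (a+1) - (a+b+2)x$, which is precisely the $\gamma$-coefficient of the Beta$(a+1,b+1)$ Stein operator, so $g_h'$ is the standard solution of that Stein equation with test function $h_2 = h' - \gamma' g_h = h' + (a+b)g_h$; since $g_h$ is Lipschitz by part (b), so is $h_2$. Applying the bound on $\fnorm{g_h'}$ from part (b) at parameters $(a+1,b+1)$ then gives $\fnorm{g_h''} \leq C(a+1,b+1)\fnorm{h_2'} \leq C(a+1,b+1)\bigl(\fnorm{h''}+(a+b)C(a,b)\fnorm{h'}\bigr)$, which is exactly the claimed bound.

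Part (d) is proved by induction on $m$ by iterating the observation used in (c). One shows by a parallel induction that $g_h^{(j)}$ is the standard solution of the Beta$(a+j,b+j)$ Stein equation with test function $\tilde h_j = h^{(j)} + j(a+b+j-1)\, g_h^{(j-1)}$; this is verified by differentiating the Beta$(a+j,b+j)$ Stein equation to get the recursion $\tilde h_{j+1} = \tilde h_j' + (a+b+2j)\,g_h^{(j)}$ and checking the explicit closed form satisfies it. Applying part (b) at level $m-1$ then yields the one-step recursion
$$
\fnorm{g_h^{(m)}} \leq C(a+m-1,b+m-1)\bigl(\fnorm{h^{(m)}}+(m-1)(a+b+m-2)\fnorm{g_h^{(m-1)}}\bigr),
$$
and unrolling this recursion by induction on $m$ reproduces the claimed product formula, with the $j$-th summand collecting the telescoping product of all the factors $l(a+b+l-1)C(a+l-1,b+l-1)$ for $j\leq l \leq m-1$.
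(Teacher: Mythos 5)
Parts (a), (c) and (d) of your plan are correct and coincide with the paper's argument: (a) is Proposition \ref{genprop3} specialized to $I(m)=m(1-m)p(m)$, (c) is exactly the application of Proposition \ref{genprop5} together with part (b) at parameters $(a+1,b+1)$, and your recursion $\fnorm{g_h^{(m)}}\leq C(a+m-1,b+m-1)\bigl(\fnorm{h^{(m)}}+(m-1)(a+b+m-2)\fnorm{g_h^{(m-1)}}\bigr)$ with test functions $h_k=h^{(k-1)}+(k-1)(a+b+k-2)g_h^{(k-2)}$ is the paper's induction verbatim. The reduction in (b) to bounding $S(x)=F(x)(1-F(x))/(x(1-x)p(x))$ by $C(a,b)/(2(a+b))$ is also the right first step.

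The genuine gap is in how you propose to bound $\fnorm{S}$, which is the entire technical content of (b). For $a\neq b$ you appeal to ``the monotonicity behaviour of $S$ in each regime,'' but $S$ is not monotone in general, and a direct analysis of $S$ does not produce the four-case constant \eqref{lipcon1}. The structure of $C(a,b)$ comes from the factorization $S(x)=f_1(x)f_2(x)/B(a,b)$ with $f_1(x)=x^{-a}\int_0^x t^{a-1}(1-t)^{b-1}dt$ and $f_2(x)=f_1(1-x;b,a)$: one shows $f_1$ is monotone (increasing iff $b\leq1$, by examining the sign of $N_1'(x)=(1-b)x^a(1-x)^{b-2}$ where $N_1=x^{a+1}f_1'$), so $\fnorm{f_1}\in\{B(a,b),a^{-1}\}$ according to whether $b\leq1$ or $b>1$, and symmetrically for $f_2$; the product of these two suprema yields exactly the four cases of \eqref{lipcon1}. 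Your plan contains no substitute for this factorization. Moreover, your treatment of $a=b$ is incorrect as stated: symmetry alone does not place the maximum of $S$ at $x=1/2$. One has $S(1-)=a^{-1}$ and $S(1/2)=p(1/2)^{-1}$, and the maximum sits at the boundary (giving $C(a,a)=4$) when $a<1$ and at $1/2$ (giving the duplication-formula constant) only when $a\geq1$; establishing which requires showing that $S$ is monotone on $[1/2,1)$, which the paper does via a careful sign analysis of $T(x)=\eta(x)p(x)(1-2F(x))-F(x)(1-F(x))\gamma(x)$ at its interior critical points and of the auxiliary function $U(x)=1-2F(x)+(2x-1)p(x)$. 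Without these two ingredients the stated constants $C(a,b)$ and $C(a,a)$ are not obtained, and everything downstream in (c) and (d) depends on them.
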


\begin{remark}\label{betaboundsrem}
\begin{enumerate}[(i)]
 \item It is worthwhile to compare our bound for $\fnorm{g_h'}$ from Proposition \ref{betabounds} (b) to the bound $\fnorm{g_h'}\leq(b_0+b_1)\fnorm{h'}$ given in \cite{GolRei13}. One can show that if $a=b$, then our bound is uniformly better than theirs.
 However, if $a\not=b$, then there are regions for $(a,b)$ where our constant $C(a,b)$ is smaller and other ones, where their $b_0+b_1$ is smaller. For instance, if $0<a,b\leq1$, then, again, $C(a,b)\leq b_0+b_1$. But, if $1<b<2$ is fixed and $a$ tends to zero, then $C(a,b)$ goes to infinity 
 while their $b_0+b_1$ tends to $12$. In any case, neither our bound nor the bound from \cite{GolRei13} seem to be optimal for $\fnorm{g_h'}$.
 \item Form Corollary \ref{gencor2} (b) we know that for Lipschitz $h$ and $x\in(0,1)$
 \[\abs{g_h'(x)}\leq\frac{2\fnorm{h'}}{a+b}\frac{H(x)G(x)}{x^2(1-x)^2p(x)}=:\fnorm{h'}B(x)\,.\]
 By an application of de l'H\^{o}pital's rule, one can show that 
 \[B(0+)=\frac{2}{a+1}\quad\text{and}\quad B(1-)=\frac{2}{b+1}\,.\]
 We conjecture that if $\min(a,b)<1$, then 
 \[\fnorm{B}=\frac{2}{\min(a,b)+1}\,,\]
 i.e. that $B$ assumes its maximum value at the boundary of $(0,1)$. However, if $\min(a,b)>1$, then we believe that there is always an $x_1\in(0,1)$ such that 
 \[\frac{2\fnorm{h'}}{a+b}\frac{H(x_1)G(x_1)}{x_1^2(1-x_1)^2p(x_1)}>\frac{2\fnorm{h'}}{\min(a,b)+1}\,.\]
\item If $a=b$, then the median of $Beta(a,a)$ equals $1/2$ and the bound in (a) has the explicit form $\fnorm{g_h}\leq B(a,a)2^{a+b-1}\fnorm{h-E[h(Z)]}$. Unfortunately, for $a\not=b$ there is no 
closed from expression for the median of $Beta(a,b)$. In such a case one could use known inequalities about the median $m$ in order to get bounds on $\fnorm{g_h}$. Since one would have to distinguish several cases 
according to the values of $a$ and $b$ and, hence, to the shape of the density $p$, we omit the details, here. 
 \end{enumerate}
 
\end{remark}

From Proposition \ref{genpluginprop1}, Remark \ref{plugrem} (ii) and the bounds from Proposition \ref{betabounds} we obtain the following plug-in result, which bounds a certain distance to the Beta distribution by terms related to a given exchangeable pair.

\begin{theorem}\label{betaplugin}
Let $W$ and $W'$ be identically distributed random variables on a common probability space $(\Om,\A,P)$ and let $\F\subseteq\A$ be a 
sub-$\sigma$-algebra of $\A$ such that $\sigma(W)\subseteq\F$ and
\begin{align*}
\frac{1}{\lambda}E\bigl[W'-W\,\bigl|\,\F\bigr]&=(a+b)\left(\frac{a}{a+b}-W\right)+R\quad\text{and}\\
\frac{1}{2\lambda}E\bigl[(W'-W)^2\,\bigl|\,\F\bigr]&=W(1-W)+S
\end{align*}
hold for a constant $\lambda>0$ and for $\F$-measurable remainder terms $R$ and $S$. Then, for each continuously differentiable function 
$h:[0,1]\rightarrow\R$ with a Lipschitz derivative $h'$ it holds that 
\begin{align*}
&\bigl|E[h(W)]-E[h(Z)]\bigr|\leq\fnorm{h'}\left( \frac{1}{a+b}E\abs{R}+C(a,b) E\abs{S}\right)\\
&\,+ \left(\frac{C(a+1,b+1)\fnorm{h''}+(a+b) C(a+1,b+1)C(a,b)\fnorm{h'}}{6\lambda}\right) E\abs{W'-W}^3\,,\\
\end{align*}
where the constants $C(\cdot,\cdot)$ are defined by \eqref{lipcon1} and \eqref{lipcon2}.
\end{theorem}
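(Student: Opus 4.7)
The plan is to specialize the abstract plug-in result Proposition \ref{genpluginprop1} (in its $\F$-conditional form noted in Remark \ref{plugrem}(ii)) to the Beta distribution by taking $f=g_h$, and then to substitute the concrete norm bounds for $\fnorm{g_h}$, $\fnorm{g_h'}$, and $\fnorm{g_h''}$ provided by Proposition \ref{betabounds}.

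First, I would record the needed regularity of $g_h$. Since $h\in C^{1,1}([0,1];\R)$, parts (b) and (c) of Proposition \ref{betabounds} guarantee that the standard solution $g_h$ to the Beta Stein equation \eqref{betasteineq} is bounded and continuously differentiable with Lipschitz derivative, and they supply the explicit bounds
\begin{align*}
\fnorm{g_h}&\leq \frac{\fnorm{h'}}{a+b},\qquad \fnorm{g_h'}\leq C(a,b)\fnorm{h'},\\
\fnorm{g_h''}&\leq C(a+1,b+1)\fnorm{h''}+(a+b)C(a+1,b+1)C(a,b)\fnorm{h'}.
\end{align*}
In particular, $g_h$ satisfies the regularity assumptions needed to invoke Proposition \ref{genpluginprop1}.

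Next, by the very definition of $g_h$ as solution to \eqref{betasteineq} with $\eta(x)=x(1-x)$ and $\gamma(x)=(a+b)\bigl(\tfrac{a}{a+b}-x\bigr)$, evaluation at $W$ and taking expectations yields
\[E[h(W)]-E[h(Z)]=E\bigl[\eta(W)g_h'(W)+\gamma(W)g_h(W)\bigr],\]
which is precisely the quantity controlled by \eqref{genplugin1}. Applying the $\F$-conditional version of Proposition \ref{genpluginprop1} (Remark \ref{plugrem}(ii)) to $f=g_h$ therefore gives
\[\bigl|E[h(W)]-E[h(Z)]\bigr|\leq \frac{\fnorm{g_h''}}{6\la}E|W'-W|^3+\fnorm{g_h}E|R|+\fnorm{g_h'}E|S|.\]
Inserting the three norm bounds listed above and grouping the terms which carry the factor $\fnorm{h'}$ produces exactly the inequality asserted in the theorem.

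The step I would expect to be essentially automatic here is the plug-in itself; the genuinely substantive work has already been carried out upstream, most notably in Proposition \ref{betabounds}(c), whose iterative derivation of the Lipschitz bound on $g_h'$ is what supplies the two constants $C(a,b)$ and $C(a+1,b+1)$ appearing in the statement. The only small point worth double-checking is that Proposition \ref{genpluginprop1} remains valid after enlarging the conditioning $\sigma$-algebra from $\sigma(W)$ to $\F\supseteq\sigma(W)$; this follows by reading the proof of Proposition \ref{genpluginprop1} with $E[\,\cdot\,|\F]$ in place of $E[\,\cdot\,|W]$ throughout, since all identities used there depend only on the tower property and on $(W,W')$ being identically distributed.
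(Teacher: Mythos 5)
Your proposal is correct and follows exactly the route the paper takes: the paper derives Theorem \ref{betaplugin} by combining the $\F$-conditional version of Proposition \ref{genpluginprop1} (Remark \ref{plugrem}(ii)) applied to $f=g_h$ with the norm bounds of Proposition \ref{betabounds}(b) and (c), which is precisely your argument. Your remark that the conditional version goes through by replacing $E[\,\cdot\,|W]$ with $E[\,\cdot\,|\F]$ matches what the paper itself asserts in Remark \ref{plugrem}(ii).
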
 

Now we are in a position to prove Theorem \ref{mt}.

\begin{proof}[Proof of Theorem \ref{mt}]
The claim immediately follows from Theorem \ref{betaplugin}, Propositions \ref{polyaprop1}, \ref{polyaprop2} and the fact that in this case 
\[R=0,\quad S=\frac{b-a}{2n}W+\frac{a}{2n}\geq0\quad\text{and}\quad  \abs{W'-W}\leq\frac{1}{n}\,.\]
\end{proof}

\section{Proofs}\label{proofs}

\begin{proof}[Proof of Proposition \ref{etaprop}]
Suppose, that $a>-\infty$ and choose $y\in(a,x_0)$. Then $\ga(y)>0$ and, by the nonnegativity of $I$ and the monotonicity of $\ga$, for $a<x<y$ we have
\begin{align}\label{ep1}
0&\leq \ga(y)F(x)=\ga(y)\int_a^x p(t)dt\leq I(x)=\int_a^x\ga(t)p(t)dt\notag\\
&\leq\ga(a)\int_a^x p(t)dt=\ga(a)F(x)\,.
\end{align}
Hence, if Condition \ref{gencond3} holds, we have 
\begin{align*}
0\leq\liminf_{x\downarrow a}\eta(x)\leq\limsup_{x\downarrow a}\eta(x)=\limsup_{x\downarrow a}\frac{I(x)}{p(x)}
\leq\ga(a)\lim_{x\downarrow a}\frac{F(x)}{p(x)}=0\,,
\end{align*}
so that $\lim_{x\downarrow a}\eta(x)=0$. Conversely, if $\eta(a+)=0$, then, again by \eqref{ep1},
\begin{align*}
 0\leq\liminf_{x\downarrow a}\frac{F(x)}{p(x)}\leq\limsup_{x\downarrow a}\frac{F(x)}{p(x)}\leq\frac{1}{\ga(y)}\limsup_{x\downarrow a}\frac{I(x)}{p(x)}=\frac{1}{\ga(y)}\limsup_{x\downarrow a}\eta(x)=0\,.
\end{align*}
The calculation for finite $b$ is similar by using the representation\\ $I(x)=-\int_x^b \ga(t)p(t)dt$ and is therefore omitted. \\ 
\end{proof}

\begin{proof}[Proof of Proposition \ref{propbs}]
That item (a) is sufficient is clear. If (b) holds, then the claim follows from the inequality
\[F(x)=\int_a^x p(t)dt\leq p(x)(x-a)\,,\]
valid for $x\in(a,a+\delta)$. Under Condition (c) we obtain a continuous and convex function on $[a,a+\delta)$ by letting $p(a):=0$. Now, let $a<x<y<a+\delta$. Then, there exists a $\la\in(0,1)$ with $x=\la a+(1-\la)y$ and by convexity we have:
\begin{align*}
p(y)-p(x)&=p(y)-p\bigl(\la a+(1-\la)y\bigr)\geq p(y)-\la p(a)-(1-\la)p(y)\\
&=\la p(y)>0\,.
\end{align*}
Thus, the assumptions of (b) are satisfied. If (d) holds, then again letting $p(a):=0$ we obtain a continuous and concave function on $[a,a+\delta)$. Thus, there exists a decreasing function $f$ on $[a,a+\delta)$ such that 
\begin{equation*}
 p(x)=\int_a^x f(t)dt\,,\quad a\leq x<a+\delta\,.
\end{equation*}
If there was a sequence $(x_n)_{n\geq1}$ in $[a,a+\delta)$ such that $x_n\downarrow a$ and $f(x_n)\leq0$ for each $n\geq1$, then for each $x\in (a,a+\delta)$ and large enough $n$ we would have
\begin{equation*}
 p(x)=p(x_n)+\int_{x_n}^x f(t)dt\leq p(x_n)+(x-x_n)f(x_n)\leq p(x_n)\stackrel{n\to\infty}{\longrightarrow}0\,,
\end{equation*}
which would contradict Condition \ref{gencond1}. Thus, there is an $\eps<\delta$ such that $f(x)\geq0$ for all $x\in(a,a+\eps)$. Hence, $p$ is increasing on $(a,a+\eps)$ and (b) is satisfied.
If (e) holds, then there is an 
$r>0$ and a real sequence $(c_k)_{k\geq0}$ such that $p(x)=\sum_{k=0}^\infty c_k (x-a)^k$ for all $x\in(a,a+r)$ and the function $f:(a-r,a+r)\rightarrow\R$ with $f(x):=\sum_{k=0}^\infty c_k (x-a)^k$ is well-defined. Let $n_0:=\min\{k\geq0\,:\,c_k\not=0\}$. Then $n_0<\infty$ since 
$p$ is positive on $(a,b)$. If $n_0=0$ and hence $f(a)=c_0=\lim_{x\downarrow a}p(x)\not=0$, then there is nothing to show. Otherwise, we have 
\[p(x)=(x-a)^{n_0}\sum_{k=n_0}^\infty c_k(x-a)^{k-n_0}\quad\text{and}\quad p'(x)=(x-a)^{n_0-1}\sum_{k=n_0}^\infty kc_k (x-a)^{k-n_0} \]
for all $x\in(a,a+r)$ and hence, by de l'H\^{o}pital's rule,
\begin{align*}
\lim_{x\downarrow a}\frac{F(x)}{p(x)}&=\lim_{x\downarrow a}\frac{p(x)}{p'(x)}
=\lim_{x\downarrow a}(x-a)\frac{\sum_{k=n_0}^\infty c_k(x-a)^{k-n_0}}{\sum_{k=n_0}^\infty kc_k (x-a)^{k-n_0}}\\
&=\frac{c_{n_0}}{n_0c_{n_0}}\lim_{x\downarrow a}(x-a)=0\,.
\end{align*}
In order to prove (f) we show that always
\begin{equation}\label{li}
 \liminf_{x\downarrow a}\frac{F(x)}{p(x)}=0,
\end{equation}
if $p$ satisfies Condition \ref{gencond1}. To show this, define the function $G(x):=\log F(x)$ for $x\in(a,b)$. Then, $G$ is increasing and continuously differentiable on $(a,b)$ and satisfies $G(a+)=-\infty$ and $G(b-)=0$.
If \eqref{li} did not hold, then
\begin{equation}\label{lsG}
 c:=\limsup_{x\downarrow a}G'(x)=\limsup_{x\downarrow a}\frac{p(x)}{F(x)}<+\infty\,.
\end{equation}
Hence, choosing $\delta>0$ such that $G'(x)\leq c+1$ for all $x\in(a,a+\delta]$ we would obtain
\begin{align*}
G(a+)-G(a+\delta)&=-\lim_{x\downarrow a}\int_x^{a+\delta} G'(t)dt\geq  -\lim_{x\downarrow a}(a+\delta-x)(c+1)=-\delta(c+1)\,,
\end{align*}
which would contradict $G(a+)=-\infty$. \\
\end{proof}

\begin{proof}[Proof of Proposition \ref{genprop3}]
With $\htilde=h-E[h(Z)]$, since $I=\eta\cdot p$, we have for $a<x<b$
\[\abs{g_h(x)}=\frac{\abs{\int_a^x \htilde(t)p(t)dt}}{\abs{p(x)\eta(x)}}=\frac{\abs{\int_a^x \htilde(t)p(t)dt}}{I(x)}\leq\fnorm{\htilde}\frac{F(x)}{I(x)}\,.\]
Let $M:(a,b)\rightarrow\R$ be given by $M(x):=\frac{F(x)}{I(x)}$. By l'H\^{o}pital's rule we have 
\[\lim_{x\searrow a}M(x)=\lim_{x\searrow a}\frac{p(x)}{\ga(x)p(x)}=\lim_{x\searrow a}\frac{1}{\ga(x)}=\frac{1}{\lim_{x\searrow a}\ga(x)}\]
which exists in $[0,\infty)$ by Condition \ref{gencond2}. Here, we used the convention $\frac{1}{\infty}=0$.
Moreover,
\[\lim_{x\nearrow b}M(x)=\frac{1}{\lim_{x\nearrow b}I(x)}=+\infty\]
again by Condition \ref{gencond2} and by Proposition \ref{genprop1}. Furthermore, we have 
\begin{align}\label{Hpos}
M'(x)=\frac{p(x)I(x)-p(x)\ga(x)F(x)}{I(x)^2}=\frac{p(x)}{I(x)^2}\Bigl(I(x)-\ga(x)F(x)\Bigr)\geq0 
\end{align}
for each $x\in(a,b)$ since by the positivity of $p$ and because $\ga$ is decreasing 
\[I(x)=\int_a^x\ga(t)p(t)dt\geq\ga(x)\int_a^xp(t)dt=\ga(x)F(x)\,.\]
Hence, $M$ is increasing and, thus, for each $x\in(a,m]$ we have
\[\abs{g_h(x)}\leq \fnorm{\htilde}\frac{F(m)}{I(m)}=\frac{\fnorm{h-E[h(Z)]}}{2I(m)}\,.\]
The same bound can be proved for $x\in(m,b)$ by using the representation 
\[g_h(x)=-\frac{1}{I(x)}\int_x^b(h(t)-E[h(Z)])p(t)dt\]
and the fact that also $1-F(m)=\frac{1}{2}$. 
\end{proof}

The following two lemmas, which are quite standard in Stein's method, will be needed for the proof of Proposition \ref{genprop4}. For proofs we refer to \cite{Doe12a}, for instance.

\begin{lemma}\label{genlemma1}
 Suppose that $p$ satisfies Condition \ref{gencond1} and that $\int_a^b \abs{x}p(x)dx<\infty$. Then, for each $x\in\abquer$ we have:
\begin{enumerate}[{\normalfont (a)}]
 \item $\int_a^xF(t)dt=xF(x)-\int_a^x s p(s)ds\,$;
 \item $\int_x^b (1-F(t))dt=\int_x^\infty sp(s)ds-x(1-F(x))$.
\end{enumerate}
\end{lemma}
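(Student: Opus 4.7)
The plan is to derive both identities directly from Fubini's theorem (or equivalently integration by parts) applied to the natural iterated-integral representations of $F$ and $1-F$. The integrability hypothesis $\int_a^b |x|p(x)\,dx<\infty$ is exactly what licenses the swap of integration order when a boundary is infinite.

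For part (a), I would begin from $F(t)=\int_a^t p(s)\,ds$, giving
\[
\int_a^x F(t)\,dt=\int_a^x\!\int_a^t p(s)\,ds\,dt.
\]
Since the integrand is nonnegative, Tonelli allows the swap, and the inner $t$-integral becomes $\int_s^x dt=x-s$. Splitting the resulting $\int_a^x(x-s)p(s)\,ds$ into two pieces gives $xF(x)-\int_a^x sp(s)\,ds$, which is (a). For part (b) I would proceed symmetrically from $1-F(t)=\int_t^b p(s)\,ds$, swap the order of integration on $\{(t,s):x<t<s<b\}$, obtain $\int_x^b(s-x)p(s)\,ds$, and split it as $\int_x^b sp(s)\,ds-x(1-F(x))$.

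The only point requiring attention is the case $b=\infty$, where the swap of integration order must be justified; here $\int_x^b sp(s)\,ds\le\int_a^b |s|p(s)\,ds<\infty$ by hypothesis, so Tonelli applies without issue and all terms are finite. No genuine obstacle is expected: this is a standard tail-integral identity, essentially an instance of $\int_a^b G(t)\,dt=\int_a^b(b\wedge s-a\vee s)\,d\mu(s)$ for the associated measure. I would present it cleanly in one short block for each part.
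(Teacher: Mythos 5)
Your proof is correct. The paper itself does not prove this lemma but defers to \cite{Doe12a}; the Tonelli/Fubini argument you give (swap the order in $\int_a^x\int_a^t p(s)\,ds\,dt$ and in the symmetric tail integral, then split $\int_a^x(x-s)p(s)\,ds$ and $\int_x^b(s-x)p(s)\,ds$) is the standard one, and your remark that the hypothesis $\int_a^b\abs{x}p(x)\,dx<\infty$ is what makes the split into two finite terms legitimate at an infinite endpoint applies equally to $a=-\infty$ in part (a), not just to $b=\infty$ in part (b).
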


\begin{lemma}\label{liplemma}
Suppose that $p$ satisfies Condition \ref{gencond1} and that $E\abs{Z}=\int_a^b \abs{x}p(x)dx<\infty$. Then, for each Lipschitz function $h$, the following assertions hold true:
\begin{enumerate}[{\normalfont (a)}]
\item For each $y\in\R$ we have\\
 $h(y)-E[h(Z)]=\int_{-\infty}^yF(s)h'(s)ds-\int_y^\infty(1-F(s))h'(s)ds$.  
\item For each $x\in\abquer$ we have\\
$\int_a^x(h(y)-E[h(Z)])p(y)dy=-(1-F(x))\int_a^xF(s)h'(s)ds-F(x)\int_x^b(1-F(s))h'(s)ds$.
\end{enumerate}
\end{lemma}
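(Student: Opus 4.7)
My plan for part (a) is to start from the trivial identity $h(y)-h(s)=\int_s^y h'(t)dt$ (valid a.e. since $h$ is Lipschitz, hence absolutely continuous), and then write
\[
h(y)-E[h(Z)]=\int_{-\infty}^\infty\bigl(h(y)-h(s)\bigr)p(s)ds\,,
\]
splitting the outer integral at $s=y$. On the piece $s<y$ the inner integral runs over $t\in[s,y]$, and on the piece $s>y$ it runs over $t\in[y,s]$ with an overall minus sign. I then intend to apply Fubini's theorem to each piece. The integrands are absolutely integrable because $\fnorm{h'}<\infty$ and $E\abs{Z}<\infty$ (which controls $\int\abs{h(y)-h(s)}p(s)ds$). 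After swapping the order of integration, the $s$-integrals collapse to $F(t)$ on $(-\infty,y)$ and to $1-F(t)$ on $(y,\infty)$, yielding the claimed identity.

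For part (b), my strategy is to plug the representation from (a) into the left-hand side:
\[
\int_a^x\bigl(h(y)-E[h(Z)]\bigr)p(y)dy=\int_a^x\!\!\left(\int_{-\infty}^y F(s)h'(s)ds-\int_y^\infty\bigl(1-F(s)\bigr)h'(s)ds\right)p(y)dy\,,
\]
and to apply Fubini once more in each of the two double integrals; again, integrability is guaranteed by $\fnorm{h'}<\infty$ and the fact that $F$ and $1-F$ are bounded by $1$. In the first double integral, swapping the order of integration (and using $F(s)=0$ for $s\leq a$) produces $\int_a^x F(s)h'(s)(F(x)-F(s))ds$. In the second, after splitting the range $s>y$ according to whether $s\leq x$ or $s>x$, the inner $y$-integral equals $F(s)$ in the first sub-range and $F(x)$ in the second, giving $\int_a^x(1-F(s))h'(s)F(s)ds+F(x)\int_x^b(1-F(s))h'(s)ds$ (using $1-F(s)=0$ for $s\geq b$).

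Combining these, the contributions on $(a,x)$ collapse via the algebraic identity $F(s)(F(x)-F(s))-F(s)(1-F(s))=-F(s)(1-F(x))$, leaving exactly $-(1-F(x))\int_a^x F(s)h'(s)ds-F(x)\int_x^b(1-F(s))h'(s)ds$, as claimed. The only genuinely delicate step is the justification of the two Fubini applications at the endpoints of the support: I expect the main obstacle to be a careful bookkeeping argument that the relevant integrands are in $L^1$ of the product measure, which I plan to handle by bounding $\abs{h'}\leq\fnorm{h'}$ and then using $\int_a^b F(s)(1-F(s))ds\leq E\abs{Z}+\abs{a}\vee\abs{b}<\infty$ together with the tail behaviour of $F$, so that all interchanges are legitimate.
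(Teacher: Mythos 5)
Your argument is correct and is the canonical one; note that the paper itself gives no proof of Lemma \ref{liplemma} (it defers to \cite{Doe12a}), and the double-integration-plus-Fubini computation you carry out is exactly the standard route, with the case split at $s=x$ in part (b) and the algebraic collapse $F(s)(F(x)-F(s))-F(s)(1-F(s))=-F(s)(1-F(x))$ both checking out. One small repair: your closing bound $\int_a^b F(s)(1-F(s))\,ds\leq E\abs{Z}+\abs{a}\vee\abs{b}$ is vacuous when $a=-\infty$ or $b=\infty$; the integrability you actually need for the Fubini step in (b) is $\int_a^y F(s)\,ds<\infty$ and $\int_y^b(1-F(s))\,ds<\infty$ for finite $y$, which follow immediately from Lemma \ref{genlemma1} under $E\abs{Z}<\infty$ (and for $x=b=+\infty$ the asserted identity degenerates to $E[\htilde(Z)]=0$, so nothing more is needed there).
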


\begin{proof}[Proof of Proposition \ref{genprop4}]
First, we prove (a). Recall the representation 
\[g_h(x)=\frac{1}{I(x)}\int_a^x(h(y)-E[h(Z)])p(y)dy\,.\]
By Lemmas \ref{liplemma} and \ref{genlemma1} we thus obtain that
\begin{align*}
&\;\abs{I(x)g_h(x)}\\
&=\left|-(1-F(x))\int_a^xF(s)h'(s)ds-F(x)\int_x^b(1-F(s))h'(s)ds\right|\\
&\leq\fnorm{h'}\Biggl((1-F(x))\int_a^x F(s)ds+F(x)\int_x^b(1-F(s))ds\Biggr)\\
&=\fnorm{h'}\Biggl((1-F(x))\Bigl(xF(x)-\int_a^x sp(s)ds\Bigr)+F(x)\Bigl(-x(1-F(x))+\int_x^b sp(s)ds\Bigr)\Biggr)\\
&=\fnorm{h'}\Biggl(-\int_a^x sp(s)ds+F(x)\Bigl(\int_a^x sp(s)ds+\int_x^b sp(s)ds\Bigr)\Biggr)\\
&=\fnorm{h'}\Biggl(F(x)E[Z]-\int_a^x yp(y)dy\Biggr)\,,
\end{align*}
implying (a).\\
Now, we turn to the proof of (b). By Stein's equation (\ref{gensteineq}) we obtain for $x\in(a,b)$
\begin{equation}\label{eqb1}
g_h'(x)=\frac{1}{\eta(x)}\Bigl(\htilde(x)-\ga(x)g_h(x)\Bigr)\,,
\end{equation}
where we have again written $\htilde=h-E[h(Z)]$. Using Lemma \ref{liplemma} again, we obtain 
\begin{align}\label{eqb2}
g_h'(x)&=\frac{1}{\eta(x)}\Biggl(\int_a^xF(s)h'(s)ds\Bigl(1+\frac{\ga(x)(1-F(x))}{\eta(x)p(x)}\Bigr)\nonumber\\
&\,+\int_x^b(1-F(s))h'(s)ds\Bigl(-1+\frac{\ga(x)F(x)}{\eta(x)p(x)}\Bigr)\Biggr)\nonumber\\
&=\int_a^xF(s)h'(s)ds\Bigl(\frac{\eta(x)p(x)+\ga(x)(1-F(x))}{\eta(x)^2p(x)}\Bigr)\nonumber\\
&\;+\int_x^b(1-F(s))h'(s)ds\Bigl(\frac{-\eta(x)p(x)+\ga(x)F(x)}{\eta(x)^2p(x)}\Bigr)\,.
\end{align}

Now, consider the functions $H,G:\abquer\rightarrow\R$ with 
\begin{align*}
 H(x)&=I(x)-\ga(x)F(x)=\eta(x)p(x)-\ga(x)F(x)\quad\text{and}\\
 G(x)&=H(x)+\ga(x)=\eta(x)p(x)+\ga(x)(1-F(x))\,.
\end{align*}
From \eqref{Hpos} we already know that $H$ is nonnegative on $(a,b)$. Similarly we prove the nonnegativity of $G$ on $(a,b)$: Since $p$ is positive and $\ga$ is decreasing, for $x$ in $(a,b)$ we have 
\begin{align*}
G(x)&=I(x)+\ga(x)(1-F(x))=-\int_x^b\ga(t)p(t)dt+\ga(x)(1-F(x))\\
&\geq-\ga(x)(1-F(x))+\ga(x)(1-F(x))=0\,.
\end{align*}
By (\ref{eqb2}) we can thus bound
\begin{align}\label{eqb3}
|g_h'(x)|&\leq\fnorm{h'}\Biggl(\int_a^xF(s)ds\frac{G(x)}{\eta(x)^2p(x)}\nonumber\\
&\;+\int_x^b(1-F(s))ds\frac{H(x)}{\eta(x)^2p(x)}\Biggr)\,,
\end{align}
which reduces to the bound asserted in (b). Optimality of the bound in (a) follows from choosing $h(x)=x$ and observing that the above inequalities are in fact equalities, in this case. To see that also the bound in (b) is optimal, for given $x\in(a,b)$ choose a $1$-Lipschitz function $h$ such that $h'(s)=1$ for all 
$s\in(a,x)$ and $h'(s)=-1$ for all $s\in(x,b)$. Then, from \eqref{eqb2} and the nonnegativity of $H$ and $G$, we see that equality holds in \eqref{eqb3}. \\
\end{proof}

\begin{proof}[Proof of Corollary \ref{gencor2}]
Claim (a) follows from Proposition \ref{genprop4} (a) and the observation that in this case we have 
\[I(x)=\int_a^x\ga(y)p(y)dy=c\int_a^x\bigl(E[Z]-y\bigr)p(y)dy\,.\]
Part (b) follows from Proposition \ref{genprop4} (b) and Lemma \ref{genlemma1} by observing that in this case
\begin{align*}
H(x)&=I(x)-\ga(x)F(x)=c\Biggl(\int_a^x\bigl(E[Z]-t\bigr)p(t)dt-\bigl(E[Z]-x\bigr)F(x)\Biggl)\\
&=c\Biggl(E[Z]F(x)-\int_a^xtp(t)dt-E[Z]F(x)+xF(x)\Biggr)\\
&=c\int_a^x F(s)ds
\end{align*} 
and, similarly, $G(x)=c\int_x^b(1-F(s))ds$.\\
\end{proof}

\begin{proof}[Proof of Proposition \ref{steincharprop}]
We first prove necessity. Let $f$ be given as in the proposition. First we show that $E\abs{\gamma(Z)f(Z)}<\infty$. 
We have 
\begin{align*}
&\;E\abs{\gamma(Z)f(Z)}=\int_a^{x_0}\gamma(x)\abs{f(x)}p(x)dx-\int_{x_0}^b\gamma(x)\abs{f(x)}p(x)dx\\
&=\int_a^{x_0}\gamma(x)p(x)\left|\int_x^{x_0} f'(t)dt-f(x_0)\right|dx
-\int_{x_0}^b\gamma(x)p(x)\left|\int_{x_0}^{x} f'(t)dt+f(x_0)\right|dx\\
&\leq\abs{f(x_0)}\left(\int_a^{x_0}\gamma(x)p(x)dx-\int_{x_0}^{b}\gamma(x)p(x)dx\right)
+\int_a^{x_0}\gamma(x)p(x)\int_x^{x_0} \abs{f'(t)}dt\\
&\,-\int_{x_0}^b\gamma(x)p(x)\int_{x_0}^{x} \abs{f'(t)}dt\\
&=\abs{f(x_0)}E\abs{\gamma(Z)}+\int_a^{x_0}\abs{f'(t)}\int_a^t \gamma(x)p(x)dxdt
-\int_{x_0}^b\abs{f'(t)}\int_t^b\gamma(x)p(x)dxdt\\
&=\abs{f(x_0)}E\abs{\gamma(Z)}+\int_a^b\abs{f'(t)} I(t)dt<\infty\,.
\end{align*}
Repeating essentially the same calculation without absolute value signs and using $E[\gamma(Z)]=0$ yields 
\[E[\eta(Z)f'(Z)]=-E[\gamma(Z)f(Z)]\,.\]
To prove sufficiency it is clearly enough to show that 
\[E[h(X)]=E[h(Z)]\]
holds for each bounded and continuous function $h$. Let $g_h$ be the standard solution of the Stein equation 
\eqref{gensteineq} corresponding to $h$. Then, from Proposition \ref{genprop3} we know that $\fnorm{g_h}<\infty$. 
Also, $g_h$ is continuous on $\abquer$ and continuously differentiable on each compact subinterval of $\abquer$. 
Furthermore, since $I(x)=\eta(x)p(x)$ and $g_h$ solves \eqref{gensteineq} we have 
\begin{equation*}
\abs{g_h'(x)}I(x)=p(x)\bigl|\htilde(x)-\gamma(x) g_h(x)\bigr|\leq p(x)\abs{\htilde(x)}+\abs{\gamma(x) g_h(x)}
\end{equation*}
and, hence, 
\begin{equation*}
\int_a^b\abs{g_h'(x)}I(x)dx\leq 2\fnorm{h}+\fnorm{g_h}E\abs{\gamma(Z)}<\infty\,.
\end{equation*}
By the hypothesis of Proposition \ref{steincharprop} we can thus conclude that 
\[0=E\bigl[\eta(X)g_h'(X)+\gamma(X) g_h(X)\bigr]=E[h(X)]-E[h(Z)]\,,\]
as desired.\\
\end{proof}

\begin{proof}[Proof of Proposition \ref{genprop5}]
 We first show that $E[h_2(\Ztilde)]$ exists. Since $\Ztilde$ has density proportional to $\eta p$ by \eqref{formelptilde} and by \eqref{gensecder}, existence follows, if we can show that 
 \begin{equation}\label{sd5}
 \int_a^b\abs{\ga'(x)}\abs{g_h(x)\eta(x)p(x)}dx<\infty\quad\text{and}\quad\int_a^b\abs{h'(x)}\eta(x)p(x)dx<\infty\,. 
 \end{equation}
To show finiteness of the first integral in \eqref{sd5} note that since $\ga$ is decreasing, by Fubini's theorem
\begin{align}\label{sd6}
 &\;\int_a^{x_0}\abs{\ga'(x)}\abs{g_h(x)\eta(x)p(x)}dx=-\int_a^{x_0}\ga'(x)\left|\int_a^x\htilde(t)p(t)dt\right|dx\notag\\
 &\leq-\int_a^{x_0}\ga'(x)\int_a^x\abs{\htilde(t)}p(t)dtdx=\int_a^{x_0}\abs{\htilde(t)}p(t)\int_t^{x_0}\bigl(-\ga'(x)\bigr)dxdt\notag\\
 &=\int_a^{x_0}\abs{\htilde(t)}p(t)\bigl(\ga(t)-\ga(x_0)\bigr)dt=\int_a^{x_0}\abs{\htilde(t)}p(t)\ga(t)dt\notag\\
 &\leq \fnorm{h'}\int_a^{x_0}\abs{t-Z}p(t)\ga(t)dt\leq \fnorm{h'}\Bigl(E\abs{Z}E\abs{\ga(Z)}+ E\abs{Z\ga(Z)}\Bigr)<\infty\,,
\end{align}
since $h$ is Lipschitz. Similarly, one shows that 
\[\int_{x_0}^b\abs{\ga'(x)}\abs{g_h(x)\eta(x)p(x)}dx<\infty\,.\]
 Since $h'$ is bounded, to show that the second integral in \eqref{sd5} is finite, it suffices to prove that 
 \begin{equation}\label{sd7}
 \int_a^b\eta(x)p(x)dx<\infty\,.
 \end{equation}
 We have
 \begin{align*}
  \int_a^{x_0}\eta(x)p(x)dx&=\int_a^{x_0}\int_a^x\ga(t)p(t)dtdx\notag\\
  &=\int_a^{x_0}\ga(t)p(t)\int_t^{x_0}dxdt=\int_a^{x_0}(x_0-t)\ga(t)p(t)dt<\infty\,,
 \end{align*}
since $E\abs{\ga(Z)}<\infty$ and $E\abs{Z\ga(Z)}<\infty$ and similarly one shows that 
\[\int_{x_0}^b\eta(x)p(x)dx<\infty\,.\]
Hence, \eqref{sd7} holds and $E[h_2(\Ztilde)]$ exists. 
 Now, we prove that $E[h_2(\Ztilde)]=0$. From \eqref{gensecder} and \eqref{formelptilde} we see that this amounts to proving
 \begin{equation}\label{sd1}
  \int_a^b h'(x)\eta(x)p(x)dx=\int_a^b \gamma'(x)g_h(x)\eta(x)p(x)dx\,.
 \end{equation}
Using $\eta p=I$, $I'=\gamma p$ and $I(a+)=I(b-)=0$, from Fubini's theorem we obtain that the left hand side of \eqref{sd1} equals
\begin{align}\label{sd2}
 &\;\int_a^{x_0}h'(x) I(x)dx+\int_{x_0}^b h'(x) I(x)dx\notag\\
 &=\int_a^{x_0}h'(x)\int_a^x I'(t)dtdx-\int_{x_0}^bh'(x)\int_{x}^b I'(t)dtdx\notag\\
 &=\int_a^{x_0}I'(t)\int_t^{x_0}h'(x)dxdt-\int_{x_0}^b I'(t)\int_{x_0}^th'(x)dxdt\notag\\
 &=\int_a^{x_0}I'(t)\bigl(h(x_0)-h(t)\bigr)dt-\int_{x_0}^b I'(t)\bigl(h(t)-h(x_0)\bigr)dt\notag\\
 &=\int_a^b \ga(t) p(t)\bigl(h(x_0)-h(t)\bigr)dt=-\int_a^b h(t)\ga(t)p(t)dt\,.
\end{align}
Similarly, using $\gamma(x_0)=0$, the definition of $g_h$ in \eqref{gensteinsol} and Fubini's theorem again, we have that the right hand side of \eqref{sd1} equals
\begin{align}\label{sd3}
 &\;\int_a^{x_0} \gamma'(x)g_h(x)\eta(x)p(x)dx+\int_{x_0}^b \gamma'(x)g_h(x)\eta(x)p(x)dx\notag\\
 &=\int_a^{x_0}\ga'(x)\int_a^x \htilde(t)p(t)dtdx-\int_{x_0}^b \gamma'(x)\int_t^b\htilde(t)p(t)dtdx\notag\\
 &=\int_a^{x_0}\htilde(t)p(t)\int_t^{x_0}\ga'(x)dxdt-\int_{x_0}^b\htilde(t)p(t)\int_{x_0}^t\ga'(x)dxdt\notag\\
 &=-\int_a^b\htilde(t)p(t)\ga(t)dt=-\int_a^bh(t)\ga(t)p(t)dt\,,
\end{align}
where we have used $E[\ga(Z)]=0$ for the last equality. Thus, from \eqref{sd2} and \eqref{sd3} we conclude that \eqref{sd1} holds.\\
Thus, the standard solution $f=f_{h_2}$ to \eqref{steineqder} is well-defined and given by 
\begin{equation}\label{sd8}
 f(x)=\frac{1}{\eta(x)^2p(x)}\int_a^x h_2(t)\eta(t)p(t)dt=\frac{-1}{\eta(x)^2p(x)}\int_x^b h_2(t)\eta(t)p(t)dt\,,\quad a<x<b\,.
\end{equation}
Hence, 
\begin{equation}\label{sd9}
 \lim_{x\downarrow a}\eta(x)^2p(x)f(x)=0=\lim_{x\uparrow b}\eta(x)^2p(x)f(x)
\end{equation}
by dominated convergence. Furthermore, since $g_h'$ is also a solution to \eqref{steineqder} and the solutions to the corresponding homogeneous equation are exactly the constant multiples 
of $\eta^{-2} p^{-1}$, there is a constant $c\in\R$ such that 
\begin{equation}\label{sd4}
g_h'(x)=f(x)+\frac{c}{\eta(x)^2 p(x)}\,,\quad x\in(a,b)\,. 
\end{equation}
Now, first suppose that $a>-\infty$.
Since $g_h$ solves the Stein equation \eqref{gensteineq} we know that 
\begin{equation}\label{sd10}
 \eta(x)^2p(x)g_h'(x)=I(x)\bigl(\htilde(x)-\ga(x)g_h(x)\bigr)\,.
\end{equation}
As $x\downarrow a>-\infty$, by \eqref{conta1}, the term in brackets converges to $\htilde(a)-\ga(a+)\frac{\htilde(a)}{\ga(a+)}=0$ and since $\lim_{x\to a}I(x)=0$ we conclude from \eqref{sd10} that also 
\begin{equation}\label{sd11}
 \lim_{x\downarrow a}\eta(x)^2p(x)g_h'(x)=0\,.
\end{equation}
Hence, from \eqref{sd9}, \eqref{sd11} and \eqref{sd4} we conclude that $g_h'=f$ is the standard solution to \eqref{steineqder}. Similarly, one obtains this result if $b<\infty$. Finally assume that $g_h'$ is bounded.
Since $\lim_{x\downarrow a}\eta(x)^2p(x)=0$ we conclude from \eqref{sd4} and \eqref{sd9} that 
\begin{equation*}
 0=\lim_{x\downarrow a}\eta(x)^2p(x)g_h'(x)=\lim_{x\downarrow a}\eta(x)^2p(x)f(x)+c=c\,.
\end{equation*}
\end{proof}

\begin{proof}[Proof of Proposition \ref{genpluginprop1}]
Let $x_0$ be defined as above and define the function $G:J\rightarrow\R$ by $G(x):=\int_{x_0}^xf(y)dy$. Then, by Taylor's formula, for each $x,x'\in I$ we have
\begin{align*}
G(x')-G(x)&=G'(x)(x'-x)+\int_x^{x'}(x'-t)G''(t)dt\\
&=f(x)(x'-x)+\int_x^{x'}(x'-t)f'(t)dt\\
&=f(x)(x'-x)+(x'-x)^2\int_0^1(1-s)f'\bigl(x+s(x'-x)\bigr)ds\,.
\end{align*}
Hence, by distributional equality, we obtain
\begin{align*}
0&=E\bigl[G(W')\bigr]-E\bigl[G(W)\bigr]\\
&=E\bigr[f(W)(W'-W)\bigr]+E\Bigl[(W'-W)^2\int_0^1(1-s)f'\bigl((W+s(W'-W)\bigr)ds\Bigr]\\
&=E\Bigr[f(W)E\bigl[W'-W|W\bigr]\Bigr]+E\Bigl[(W'-W)^2\int_0^1(1-s)f'\bigl((W+s(W'-W)\bigr)ds\Bigr]\\
&=\la E\bigl[f(W)\ga(W)\bigr]+\la E\bigl[f(W)R\bigr]\\
&+E\Bigl[(W'-W)^2\int_0^1(1-s)f'\bigl((W+s(W'-W)\bigr)ds\Bigr]\,,
\end{align*}
yielding 
\begin{equation}\label{genplugin1eq1}
E\bigl[f(W)\ga(W)\bigr]=-\frac{1}{\la}E\Bigl[(W'-W)^2\int_0^1(1-s)f'\bigl((W+s(W'-W)\bigr)ds\Bigr]-E\bigl[f(W)R\bigr].
\end{equation}
This immediately implies the identity
\begin{align}\label{genplugin1eq2}
&\;E\bigl[\eta(W)f'(W)+\ga(W)f(W)\bigr]\nonumber\\
&=E\Bigr[f'(W)\bigl(\eta(W)-\frac{1}{2\la}(W'-W)^2\bigr)\Bigr]\nonumber\\
&\;+\frac{1}{\la}E\Bigl[(W'-W)^2\int_0^1(1-s)\Bigl(f'(W)-f'\bigl((W+s(W'-W)\bigr)\Bigr)ds\Bigr]\nonumber\\
&\;-E\bigl[f(W)R\bigr]\,.
\end{align}
Observing that 
\[\Bigl| f'(W)-f'\bigl((W+s(W'-W)\bigr)\Bigr|\leq\fnorm{f''}\, s\abs{W'-W}\]
and $\int_0^1s(1-s)ds=\frac{1}{6}$
the bound \eqref{genplugin1} now easily follows from \eqref{genplugin1eq2} and the properties of $f$.\\
\end{proof}

\begin{proof}[Proof of Proposition \ref{betabounds}]
Claim (a) immediately follows from Proposition \ref{genprop3}. Similarly, the first part of claim (b) immediately follows from Corollary \ref{gencor2} (a). 
For the second part of (b) we note that by Corollary \ref{gencor2} (b) we have for $x\in(0,1)$:
\begin{equation}\label{cb1}
 \abs{g_h'(x)}\leq 2(a+b)\fnorm{h'}\frac{\int_0^x F(t)dt\int_x^1(1-F(s))ds}{x^2(1-x)^2p(x)},\quad x\in(0,1). 
\end{equation}
Since $F$ is increasing and $1-F$ is decreasing, we have 
\begin{equation*}
 \int_0^x F(t)dt\leq x F(x)\quad\text{and}\quad \int_x^1(1-F(s))ds\leq (1-F(x))(1-x)
\end{equation*}
for each $x\in[0,1]$. Plugging this into \eqref{cb1} yields
\begin{equation}\label{cb2}
\abs{g_h'(x)}\leq 2(a+b)S(x)\fnorm{h'}\,,
\end{equation}
where 
\begin{equation*}
S(x)=\frac{F(x)(1-F(x))}{\eta(x)p(x)} =\frac{1}{B(a,b)}\frac{\int_0^xt^{a-1}(1-t)^{b-1}dt\int_x^1s^{a-1}(1-s)^{b-1}ds}{x^a(1-x)^b}.
\end{equation*}
By de l'H\^{o}pital's rule, one can easily show that $S(0+)=a^{-1}$ and $S(1-)=b^{-1}$.
Thus, it suffices to bound $\fnorm{S}$. For general $a,b>0$ we write 
\begin{equation}\label{cb5}
 S(x)=\frac{1}{B(a,b)}f_1(x)f_2(x)\,,
\end{equation}
where 
\begin{align*}
 f_1(x)&:=f_1(x;a,b):=\frac{\int_0^xt^{a-1}(1-t)^{b-1}dt}{x^a}\quad\text{and}\\
f_2(x)&:=f_2(x;a,b):=\frac{\int_x^1t^{a-1}(1-t)^{b-1}dt}{(1-x)^b}\,.
 \end{align*}
For $a\not=b$ we bound the functions $f_1$ and $f_2$ seperately. By de l'H\^{o}pital's rule we have
\begin{equation*}
\lim_{x\downarrow0}f_1(x)=\lim_{x\downarrow0}\frac{x^{a-1}(1-x)^{b-1}}{ax^{a-1}}=\frac{1}{a}.
\end{equation*}
Also, note that 
\begin{equation*}
f_1'(x)=\frac{x^a(1-x)^{b-1}-a \int_0^xt^{a-1}(1-t)^{b-1}dt}{x^{a+1}}=:\frac{N_1(x)}{x^{a+1}}\,.
\end{equation*}
We have $N_1(0+)=0$ and 
\begin{equation*}
 N_1'(x)=(1-b)x^a(1-x)^{b-2}\begin{cases}
                             \geq0\;\forall x\in(0,1),& b\leq1\\
                             <0\;\forall x\in(0,1),& b>1\,.
                             \end{cases}
\end{equation*}
This implies that $N_1$ is nonnegative and, hence, $f_1$ is increasing for $b\leq1$ and that $N_1$ is nonpositive and, hence, $f_1$ is decreasing for $b>1$. Thus, 
\begin{equation}\label{cb3}
 \fnorm{f_1}=\begin{cases}
              f_1(1-)=B(a,b),&b\leq 1\\
              f_1(0+)=a^{-1},&b>1\,.
             \end{cases}
\end{equation}
Since $f_2(x;a,b)=f_1(1-x;b,a)$ we have 
\begin{equation}\label{cb4}
 \fnorm{f_2}=\begin{cases}
              B(b,a)=B(a,b),&a\leq1\\
              b^{-1},&a>1\,.
             \end{cases}
\end{equation}
Thus, from \eqref{cb2}, \eqref{cb5}, \eqref{cb3} and \eqref{cb4} we have 
\begin{equation*}
 \fnorm{g_h'}\leq C(a,b)\fnorm{h'},
\end{equation*}
where $C(a,b)$ is given by \eqref{lipcon1} and \eqref{lipcon2}.
In the case $a=b$ we can provide better bounds. First note that in this case the Beta distribution $Beta(a,a)$ is symmetric with respect to $1/2$. This easily implies that 
\begin{equation*}
 S(1/2-x)=S(1/2+x)
\end{equation*}
holds for each $0\leq x\leq 1/2$. Thus it suffices to bound $S$ on $[1/2,1)$. Note that 
\begin{equation*}
S(1/2)=\frac{F(1/2)(1-F(1/2))}{1/2(1-1/2)p(1/2)}=\frac{1}{p(1/2)}=2^{2a-2}B(a,a)
\end{equation*}
and 
\begin{equation*}
S(1-)=F(1)\lim_{x\uparrow1}\frac{1-F(x)}{\eta(x)p(x)}= \lim_{x\uparrow1}\frac{-p(x)}{\gamma(x)p(x)}=\frac{-1}{\gamma(1)}=\frac{1}{b}=\frac{1}{a}\,.
\end{equation*}
For $x\in(0,1)$ we have 
\begin{equation}\label{cb9}
 S'(x)=\frac{T(x)}{\eta(x)^2 p(x)},
\end{equation}
where
\begin{equation}\label{cb6}
T(x)=\eta(x)p(x)(1-2F(x))-F(x)(1-F(x))\gamma(x)\,.        
\end{equation}
Thus, $S$ is increasing (decreasing) on $[1/2,1)$, if and only if $T$ is nonnegative (nonpositive) there. 
In the case $a=b$ we have $\ga(x)=a(1-2x)$ and, hence, $\ga(1/2)=F(1/2)=0$. Thus, recalling that $I(1)=(\eta p)(1-)=0$ we have 
\begin{equation}\label{cb7}
 T(1/2)=T(1-)=0\,.
\end{equation}
By \eqref{cb7} the nonnegativity (nonpositivity) of $T$ on $[1/2,1)$ follows, if $T(y)\geq 0$ ($\leq0$) for every locally extremal point $y\in (1/2,1)$. 
We have 
\begin{equation}\label{cb11}
T'(x)=-2\eta(x)p(x)^2-\gamma'(x)F(x)(1-F(x)) 
\end{equation}
and, hence, if $y\in (1/2,1)$ is a locally extremal point of $T$, we have $T'(y)=0$ and
\begin{align}\label{cb8}
T(y)&=\eta(y)p(y)\Bigl(1-2F(y)+2\frac{\ga(y)}{\ga'(y)}p(y)\Bigr)\notag\\
&=\eta(y)p(y)\Bigl(1-2F(y)+(2y-1)p(y)\Bigr)\,.
\end{align}
Now, for $x\in[1/2,1)$, consider the function
\begin{equation*}
 U(x)=1-2F(x)+(2x-1)p(x)
 \end{equation*}
 and note that $U(1/2)=0$. For $1/2\leq x<1$ we have 
 \begin{equation*}
  U'(x)=(2x-1)p'(x)=p(x)(2x-1)\psi(x)=\frac{p(x)(1-2x)^2}{x(1-x)}(1-a)
 \end{equation*}
 and, hence, $U$ is increasing for $a\leq1$ and is decreasing for $a\geq1$. Since $U(1/2)=0$ it thus follows from \eqref{cb8} that if $y\in(1/2,1)$ is a locally extremal point of $T$, then $T(y)$ is nonnegative for $a<1$ and nonpositive for $a\geq1$. From \eqref{cb9} and \eqref{cb7} it thus follows that 
 $S$ is decreasing on $[1/2,1)$ if $a\geq1$ and increasing if $a<1$. Hence, we can conclude that
 \begin{equation}\label{cb10}
  \fnorm{S}=\sup_{x\in[1/2,1)}S(x)=
  \begin{cases}
  S(1-)=a^{-1},& 0<a<1\\
  S(1/2)=p(1/2)^{-1},&a\geq1\,.
 \end{cases}
 \end{equation}
Note that by the duplication formula for the Gamma function we have  
\begin{align*}
 p(1/2)&=B(a,a)^{-1}\Bigl(\frac{1}{2}\Bigr)^{2a-2}=\Bigl(\frac{1}{2}\Bigr)^{2a-2}\frac{\Gamma(2a)}{\Gamma(a)^2}\\
 &=\frac{2^{2a-1}\Gamma(a+1/2)\Gamma(a)}{\sqrt{\pi}\Gamma(a)^2 2^{2a-2}}=\frac{2\Gamma(a+1/2)}{\sqrt{\pi}\Gamma(a)}.
\end{align*}
Hence, by \eqref{cb2} and \eqref{cb10} this implies 
\begin{equation*}
 \fnorm{g_h'}\leq 4a\fnorm{S}\fnorm{h'}=\fnorm{h'}
 \begin{cases}
  4,&0<a<1\\
  \frac{2a\sqrt{\pi}\Gamma(a)}{\Gamma(a+1/2)},&a\geq1.
 \end{cases}
\end{equation*}
Now, we turn to the proof of (c). From Proposition \ref{genprop5} we know that $g_h'$ is the standard solution to the Stein equation 
\begin{equation*}
 x(1-x)f'(x)+\bigl(a+1-(a+b+2)x\bigr)f(x)=h_2(x)=h'(x)+(a+b)g_h(x)
\end{equation*}
corresponding to the distribution $Beta(a+1,b+1)$. Thus, since $h_2$ is Lipschitz by part (b), applying (b) for $Beta(a+1,b+1)$ and for $Beta(a,b)$ yields 
\begin{align}\label{secder1}
 \fnorm{g_h''}&\leq C(a+1,b+1)\fnorm{h_2'}\leq C(a+1,b+1)\bigl(\fnorm{h''}+(a+b)\fnorm{g_h'}\bigr)\notag\\
 &\leq C(a+1,b+1)\fnorm{h''} + C(a+1,b+1)C(a,b)\fnorm{h'}\,,
\end{align}
as claimed. The proof of (d) is very similar to the proof of (c) which is why we only give a sketch. Defining $h_1:=\htilde$ and for $k=2,\dotsc,m$ 
\begin{equation*}
 h_k(x)=x(1-x)g_h^{(k)}(x)+\bigl(a+k-1-(a+b+2k-2)x\bigr)g_h^{(k-1)}(x)\,,
\end{equation*}
one can see by induction that for all $k=2,\dotsc,m$
\begin{equation*}
 h_k=h^{(k-1)}+(k-1)(a+b+k-2)g_h^{(k-2)}\,.
\end{equation*}
Hence, by (b) and from Proposition \ref{genprop5} similarly to \eqref{secder1} we can prove that 
\begin{align*}\label{mder1}
 &\;\fnorm{g_h^{(m)}}\leq C(a+m-1,b+m-1)\fnorm{h_m'}\notag\\
 &=C(a+m-1,b+m-1)\fnorm{h^{(m)}+(m-1)(a+b+m-2)g_h^{(m-1)}}\notag\\
 &\leq C(a+m-1,b+m-1)\Bigl(\fnorm{h^{(m)}}+(m-1)(a+b+m-2)\fnorm{g_h^{(m-1)}}\Bigr)\,.
\end{align*}
The bound now follows from an easy induction on $m$.
\\
\end{proof}

\normalem
\bibliography{Beta2014}{}
\bibliographystyle{plain}

\end{document}